\DeclareMathOperator{\diag}{diag}
\DeclareMathOperator*{\id}{id}
\DeclareMathOperator*{\argmin}{argmin}
\DeclareMathOperator{\GW}{GW}
\DeclareMathOperator{\FGW}{FGW}
\DeclareMathOperator{\UMGW}{UMGW}
\DeclareMathOperator{\UMFGW}{UMFGW}
\DeclareMathOperator{\UFGW}{UFGW}
\DeclareMathOperator{\MFGW}{MFGW}
\DeclareMathOperator{\UGW}{UGW}
\DeclareMathOperator{\MGW}{MGW}
\DeclareMathOperator{\KL}{KL}
\DeclareMathOperator{\TV}{TV}
\DeclareMathOperator{\F}{F}
\newcommand{\vX}{\bm{X}}
\newcommand{\vXA}{\bm{X\!\!\!A}}
\newcommand{\vA}{\bm{A}}
\newcommand{\vx}{\bm{x}}
\newcommand{\va}{\bm{a}}
\newcommand{\vy}{\bm{y}}
\newcommand{\mpi}{\bm{\pi}}
\newcommand{\weakly}{\rightharpoonup}
\newcommand{\N}{\ensuremath{\mathbb{N}}}
\newcommand{\M}{\mathcal{M}}
\newcommand{\R}{\ensuremath{\mathbb{R}}}
\newcommand{\XX}{\mathbb{X}}
\newcommand{\YY}{\mathbb{Y}}
\newcommand{\Xf}{\mathfrak{X}}
\newcommand{\Yf}{\mathfrak{Y}}
\newcommand{\dx}{\,\mathrm{d}}
\newcommand{\tT}{\mathrm{T}}
\newcommand{\p}{\mathcal{P}}
\newcommand{\bal}{\mathrm{bal}}
\newcommand{\free}{\mathrm{free}}
\newcommand{\ent}{\mathrm{BS}}
\newcommand{\struc}{\mathrm{mm}}
\newcommand{\lab}{\mathrm{lb}}
\newcommand{\fused}{\mathrm{fused}}
\newcommand{\bi}{\mathrm{bi}}
\newcommand{\eps}{\varepsilon}
\definecolor{PLOTblue}{rgb}{0.122,0.467,0.706}
\definecolor{PLOTorange}{rgb}{1,0.498,0.055}
\definecolor{PLOTgreen}{rgb}{0.173,0.627,0.173}
\newlength{\labwidth}
\crefname{hypothesis}{Hypothesis}{Hypotheses}
\title{Multi-marginal Gromov--Wasserstein Transport and Barycenters%
  \thanks{This work was funded by
      the German Research Foundation (DFG)
      within the RTG~2433 DAEDALUS
      and by the BMBF project ``VI-Screen'' (13N15754).}}
\author{Florian Beier%
  \thanks{Institute of Mathematics,
    Technische Universit\"at Berlin,
    Stra\ss{}e des 17. Juni 136,
    10623 Berlin, Germany 
    (\email{beier@math.tu-berlin.de},
    \email{beinert@math.tu-berlin.de},
    \email{steidl@math.tu-berlin.de}).}
  \and Robert Beinert\footnotemark[2]
  \and Gabriele Steidl\footnotemark[2]}
\begin{document}

\maketitle

\begin{abstract}
Gromov--Wasserstein (GW) distances 
are combinations of Gromov--Hausdorff and 
Wasserstein distances
that allow the comparison of two different metric measure spaces (mm-spaces).
Due to their invariance under measure- and 
distance-preserving transformations,
they are well suited 
for many applications in graph and shape analysis.
In this paper,
we introduce the concept of multi-marginal GW transport 
between a set of mm-spaces
as well as its
regularized and unbalanced versions. 
As a special case,
we discuss multi-marginal fused variants,
which combine the structure information of an mm-space with label information from an additional label space.
To tackle the new formulations numerically,
we consider the bi-convex relaxation of the multi-marginal GW problem, which is tight in the balanced case
if the cost function is conditionally negative definite.
The relaxed model can be solved by an alternating minimization,
where each step can be performed by a multi-marginal Sinkhorn scheme.
We show relations of our multi-marginal GW problem  
to (unbalanced, fused) GW barycenters
and present various numerical results,
which indicate the potential of the concept.
\end{abstract}

\begin{keywords}
Multi-marginal Gromov--Wasserstein transport,
unbalanced Gromov--Wasserstein transport,
fused Gromov--Wasserstein transport,
Gromov--Wasserstein barycenters,
tight bi-convex relaxation,
marginal conditionally negative definiteness. 
\end{keywords}

\begin{MSCcodes}
  65K10, 49M20, 28A35, 28A33.
\end{MSCcodes}

\section{Introduction}
Optimal transport (OT) seeks to optimally transport mass between two input measures according to some underlying cost function. Due to various applications, 
OT and its regularized and unbalanced variants
have attracted increasing attention both from the theoretical and computational point 
of view in recent years \cite{PC19book,S15otapplied}. The metric version of OT, referred to as the Wasserstein distance, is of particular interest \cite{villani2008optimal,ABS21}.
The exact matching of the input probabilities can be relaxed, which gives rise to 
several distances between arbitrary measures using unbalanced OT \cite{LMS18,CPSV18}. Regularized OT, which is numerically much more appealing, leads to the so-called Sinkhorn distances, which may be computed by the parallelizable Sinkhorn algorithm \cite{C2013}. Associated computational aspects are treated in \cite{CPSV17,SFVP19}.
For certain practical tasks as matching for teams \cite{CE10}, 
particle tracking \cite{CK18} and information fusion \cite{DC14,EHJK20}, 
it is useful  to compute transport plans between more than two marginal measures. 
This is possible in the framework of multi-marginal OT \cite{GS98,Pass15}, which was tackled numerically for Coulomb costs in \cite{BCN16} 
and for repulsive costs in \cite{CDD15,GKR20}.
Regularized and unbalanced variants of multi-marginal OT, which
can be efficiently solved for tree-structured costs 
using Sinkhorn iterations, were treated in \cite{BLNS2021,flows19BCN,tree21HRCK,LHXCJ20fastIBP}.
Furthermore, the multi-marginal OT with special cost function 
is closely related to the OT barycenter problem, see \cite{AB20,ABM16brute,JvL2022}.
The latter essentially constitute Fréchet means with respect to the OT divergence. Barycenters are of particular interest in the Wasserstein case and have been successfully applied for texture mixing \cite{texturemix11} and Bayesian inference \cite{bayes18}.
In an effort to reduce computational costs, slicing strategies can been leveraged to approximate (multi-marginal) OT \cite{sliced_gw,cohen2021sliced}.

For some applications such as shape matching, 
a drawback of OT lies in the fact that it is not invariant to 
isometric transformations such as translations or rotations.
A remedy are Gromov--Wasserstein (GW) distances, 
which were first considered by M\'emoli in \cite{memoli2011gromov}
as a modification of Gromov--Hausdorff distances and Wasserstein distances. 
In this framework, the objective is to match input measures living on maybe different metric spaces so that pairwise intrinsic distances are preserved.
For a survey of the geometry of Gromov--Wasserstein distances, we refer to \cite{sturm2020space}. The OT literature inspired a number of generalizations for GW such as unbalanced GW distances, which were studied in \cite{SejViaPey21},
a sliced version of GW distances in \cite{sliced_gw,BHS21} and a linear one in \cite{beier2021linear}. 
A fused Gromov--Wasserstein distance, which also accounts for labelled input data, was introduced in \cite{vayer2020fused}.
A linear fused version has been discussed in \cite{nguyen2022linearfused} and a spherical sliced one in \cite{nguyen2020improving}. An unbalanced fused Gromov--Wasserstein distance has been used to align cortical surfaces in \cite{fugw}.
Furthermore, Gromov--Wasserstein distances were examined for Gaussian measures
in \cite{le2021entropic,salmona2021gromov}.
A generalization of GW distances, called co-optimal transport (COOT), was introduced in \cite{titouan2020co}, 
and its unbalanced version was discussed in \cite{tran2022unbalanced}. Finally, the framework of optimal tensor transport \cite{ott} provides a general formulation which encompasses OT, GW and COOT.

Similarly to the OT case, GW barycenters have gained increasing attention since their first study in \cite{PCS2016}. 
Besides the computation of the measure,
the GW barycenter problem also consists in 
the determination of the underlying metric space.
To obtain a numerically tractable algorithm,
Peyré, Cuturi, and Solomon \cite{PCS2016} employ an entropic regularization
of the GW distance
and fix the number of support points and the measure of the barycenter beforehand. 
In other words, 
the objective is here only minimized with respect to the internal geometry
of the metric space.
In \cite{s-gwl,XLZD2019}, a similar procedure is used, where the entropic regularizer is replaced by a proximal term.
Moreover, Brogat-Motte et al.\ \cite{brogat2022learning} train a neural network to estimate fused GW barycenters with an a priori fixed discrete uniform distribution.
A disadvantage of these methods is 
that the GW barycenter of two images (interpreted as probability measures)
is a generic metric measure space.
For applications in imaging,
it would therefore be desirable to fix the geometric structure
and to solely minimize with respect to the unknown measure.

The existing literature about GW distances and its applications
mainly studies GW transports and matchings between two input measures.
In certain applications like
clustering of shapes \cite{PCS2016,beier2021linear}, 
multi-graph partitioning and matching \cite{s-gwl}, 
and graph prediction \cite{brogat2022learning},
a simultaneous GW-like matching of an arbitrary number of inputs is needed.
So far this problem has been resolved by utilizing a reference input such as GW barycenters and using bi-marginal GW transports.
An alternative approach to deal with the simultaneous matching
would be to generalize the multi-marginal OT variants with respect to the GW setting.

\paragraph{Contributions}
In this paper, inspired by multi-marginal OT, we propose a multi-marginal GW  formulation which directly allows for a simultaneous matching of an arbitrary number of inputs.
We also treat the associated regularized, unbalanced and fused variants.
Secondly, we study the associated bi-convex relaxation and show a tightness result in the balanced setting.
Thirdly, we leverage this new formulation to obtain two results regarding GW barycenters. One is mainly of theoretical nature and shows that barycenters are completely characterised by (multi-marginal) GW plans. The second result shows an unbalanced multi-marginal characterisation for barycenters with fixed support. We use the latter to obtain a novel computationally tractable algorithm for GW barycenters
with possible applications in imaging and data processing.

\paragraph{Outline of the paper}
In \cref{sec:gromov-wasserstein}, we recall basic facts on GW distances. Then, in \cref{sec:UMGW}, we introduce 
the concept of multi-marginal GW transport. A tight relaxation of the multi-marginal GW transport
is considered in \cref{sec:bi-convex-relax}. In particular, we will
make use of marginal conditionally negative semi-definite kernels.
The relation of multi-marginal GW transport
to the GW barycenter problem is examined in \cref{sec:bary}.
Multi-marginal fused GW transports and barycenters are studied in \cref{sec:fused}.
Numerical proof-of-concept results in imaging are provided in \cref{sec:numerics}.

\section{Gromov--Wasserstein Distance} \label{sec:gromov-wasserstein}
A \emph{metric measure space} (mm-space) is a triple $\XX \coloneqq (X,d,\mu)$
consisting of a compact metric space $(X,d)$
and a Borel probability measure $\mu$
on the Borel $\sigma$-algebra
induced by the metric $d$ on $X$. 
Within the paper, we stick to this compact setting, although a more general treatment is possible, see e.g.\ \cite{sturm2020space}.
In the following, we denote by $\mathcal M(X)$ the Banach space of signed Borel measures equipped 
with the total variation norm 
$\|\cdot\|_{\TV}$,
by $\mathcal M^+(X)$ the subset of positive Borel measures
and by 
$\p(X)$ the subset of probability measures. 
Recall that a sequence $(\mu_n)_{n \in \N} \subset \mathcal M(X)$ \emph{converges weakly} to $\mu \in  \mathcal M(X)$,
written $\mu_n \weakly \mu$,
if 
$\int_X \varphi \, \dx \mu_n(x) \to \int_X \varphi \, \dx \mu(x)$
for all continuous functions $\varphi$ on $X$.

For the mm-spaces
$\XX_1 \coloneqq (X_1,d_1,\mu_1)$ and $\XX_2 \coloneqq (X_2,d_2,\mu_2)$,
the \emph{Gromov--Wasserstein distance} is defined by
\begin{equation} \label{GW}
    \GW(\XX_1,\XX_2) 
    \coloneqq \inf_{\pi \in \Pi(\mu_1,\mu_2)} \Big( 
    \int_{(X_1 \times X_2)^2}
    \lvert d_1(x_1,x_1') - d_2(x_2,x_2')\rvert^2
    \dx \pi(x_1,x_2) \dx \pi(x_1',x_2') \Big)^\frac12,
\end{equation}
where $\Pi(\mu_1,\mu_2)\subset \p(X_1 \times X_2)$
consists of all measures with marginals $\mu_1$ and $\mu_2$.
By \cite[Cor~10.1]{memoli2011gromov},
the infimum in \cref{GW} is attained by some 
optimal GW transport plan $\pi^*$, which is in general not unique.
Furthermore,
$\GW(\XX_1,\XX_2) = 0$
if and only if
there exists a measure-preserving isometry, i.e.\ an isometry $\mathcal I \colon (X_1, d_1) \to (X_2, d_2)$
with $\mu_2 = \mathcal I_\# \mu_1 \coloneqq \mu_1 \circ \mathcal I^{-1}$.
In this case, we call $\XX_1$ and $\XX_2$ \emph{isomorphic} to each other.
The GW distance defines a metric
on the quotient space of mm-spaces
with respect to measure-preserving isometries \cite[Thm~5.1]{memoli2011gromov}.

Instead of fixing the marginals of the transport plan,
Séjourné, Vialard, and Peyré \cite{SejViaPey21} transferred
the concept of unbalanced OT \cite{LMS18}
to the GW setting by penalizing
the divergences between the marginals and the given measures.
To this end, recall that, for a 
so-called \textit{entropy function},
i.e.\ a convex and lower semi-continuous functions $\phi \colon \R_{\ge 0} \to [0,\infty]$
satisfying $\phi(1) = 0$, and 
two measures $\mu,\nu \in \mathcal M^+(X)$
with Radon--Nikodým decomposition $\mu = \dx \mu / \dx \nu + \mu^\perp$, 
the \emph{(Csiszár) $\phi$-divergence} is given by 
\begin{equation*}
    D_\phi(\mu,\nu) 
    \coloneqq
    \int_X \phi \bigl(\tfrac{\dx \mu}{\dx \nu} \bigr) \dx \nu
    + \phi^\prime_\infty \int_X \dx \mu^\perp
\end{equation*}
with the convention $0 \cdot \infty = 0$
and the \textit{recession constant}  
$\phi^\prime_\infty \coloneqq \lim_{x \to \infty} \phi(x)/x$.
  Csiszár divergences are
  jointly convex,  weakly lower semi-continuous,
  and non-negative, see \cite[Cor~2.9]{LMS18}.
In this paper, we will apply the $\phi$-divergences given in \cref{tab:entro}.
\begin{table}[t]
  \centering
  \caption{Entropy functions $\phi$ with recession constant $\phi_\infty'$ and corresponding $\phi$-divergences.}
  \label{tab:entro}
  \begin{tabular}{lccc}
    \toprule
    & $\phi(t)$
    & $\phi_\infty'$
    & $D_\phi(\mu,\nu)$
    \\
    \midrule
    balanced ($\phi_{\bal}$)
    & $\iota_{\{1\}}(t)$
    & $\infty$
    & $\iota_{\{\nu\}}(\mu)$
    \\
    unconstrained ($\phi_{\free}$)
    & $\iota_{[0,\infty)}(t)$
    & $0$
    & $0(\mu,\nu) \equiv 0 $
    \\
    Kullback--Leibler ($\phi_{\ent}$)
    & $t \log(t) - t + 1$
    & $\infty$
    & $\KL(\mu,\nu)$
    \\
    \bottomrule
  \end{tabular}
\end{table}
In particular, for the Boltzman--Shannon entropy $\phi_{\ent}(t) \coloneqq t \log(t) - t + 1$, we obtain
the \emph{Kullback--Leibler (KL) divergence}
\begin{equation*}
  \KL(\mu,\nu)
  \coloneqq
  \begin{cases}
    \int \log \bigl(\frac{\dx \mu}{\dx \nu}\bigr) \dx \nu
    - \mu(X) + \nu(X),
    & \mu \ll \nu,
    \\
    \infty,
    &\text{else}.
  \end{cases}
\end{equation*}

The \emph{regularized, unbalanced GW transport}
\cite{SejViaPey21}
is given for $\varepsilon \ge  0$ by
\begin{equation*}
  \UGW_\eps(\XX_1,\XX_2)
  \coloneqq 
  \!\!\!\!\!
  \inf_{\pi \in \M^+(X_1\times X_2)}
  \Big(\int_{(X_1 \times X_2)^2}
  \!\!\!\!\!
  |d_1 - d_2|^2
  \dx \pi \dx \pi 
	+ \sum_{i=1}^2    D_{\phi_i}^\otimes(\pi_i, \mu_i)
	+ \eps \KL^\otimes(\pi,\mu_1 \otimes \mu_2)
       \Big)^{\frac12}
\end{equation*}
with $D_{\phi_i}^\otimes(\mu,\nu) \coloneqq D_{\phi_i}(\mu \otimes \mu,\nu \otimes \nu)$, $\KL^\otimes \coloneqq D_{\phi_{\text{BS}}}^\otimes$, and marginals 
$\pi_i \coloneqq (P_i)_\# \pi $,
where $P_i(x_1,x_2) \coloneqq x_i$, $i=1,2$.
We use similar notations for projections onto the components of higher-order Cartesian products.
Depending on the chosen divergence,
the marginals of the computed plan may
match the given $\mu_i$ (balanced),
be close to them (scaled Kullback--Leibler),
or completely free (unconstrained). 
In the unbalanced setting, 
the given $\mu_i$ do not have to be probability measures,
i.e.\ $\mu_i \in \M^+(X_i)$.
To keep the notation simple, we use $\mu_1 \otimes \mu_2$ in the KL regularization.
However, this could be replaced by the the product of Lebesgue measures
or counting measures on the respective spaces, see \cite{BLNS2021,NS20}.

\section{Multi-marginal Gromov--Wasserstein Transport}\label{sec:UMGW}
Towards a multi-marginal GW setting,
we consider a series of mm-spaces
$\XX_i \coloneqq (X_i,d_i,\mu_i)$, $i=1,\dots, N$.
The Cartesian product of the domains
is henceforth denoted by 
\begin{equation*}
    \vX \coloneqq \bigtimes_{i=1}^N X_i
    \qquad\text{with}\qquad
    \vx \coloneqq (x_1, \dots, x_N),
    \;
    x_i \in X_i.
\end{equation*}
We introduce the \emph{multi-marginal GW transport} problem as
\begin{equation*}
  \MGW(\XX_1,\dotsc,\XX_N)
  \coloneqq \inf_{\pi \in \Pi(\mu_1,\dots,\mu_N)}
  \int_{\vX^2} c(\vx,\vx') \dx \pi(\vx) \dx \pi(\vx'),
\end{equation*}
where $\Pi(\mu_1,\dots,\mu_N) \coloneqq \{\pi \in \p(X) : (P_i)_\# \pi = \mu_i \}$
and $c \colon \vX \times \vX \to [0,\infty]$ is dependent on $d_1,\dotsc,d_N$.
A typical cost function is 
\begin{equation}   \label{eq:cost}
  c(\vx,\vx')
  \coloneqq
  \sum_{i,j=1}^N
  c_{ij} |d_i(x_i,x'_i) - d_j(x_j, x_j')|^2
  \qquad\text{with}\qquad
  c_{ij} \ge 0.
\end{equation}
In analogy to the unbalanced GW transport,
we consider for $\varepsilon \ge 0$, the
\emph{regularized, unbalanced multi-marginal GW transport} problem
\begin{align*}
  &\UMGW_\eps(\XX_1,\dotsc,\XX_N)
  \coloneqq 
  \inf_{\pi \in \M^+(\vX)}
  F_{\eps}(\pi)
  \\
  &F_{\eps}(\pi)
  \coloneqq
    \int_{\vX^2} c(\vx,\vx') \dx \pi(\vx) \dx \pi(\vx')
    + \sum_{i=1}^N D_{\phi_i}^\otimes(\pi_i, \mu_i)
    + \eps \KL^\otimes(\pi,\mu^\otimes),
\end{align*}
where $\mu^\otimes \coloneqq \mu_1 \otimes \cdots \otimes \mu_N$.
Under mild conditions, the infimum is attained.

\begin{proposition}[Existence of Minimizer]     
\label{prop:existence}
    Let the cost function $c \colon \vX \times \vX \to \R_+$
    be lower semi-continuous.  
    Assume
    either $\varepsilon > 0$
    or $\sum_{i=1}^N (\phi_i)'_\infty > 0$.
    Then $\UMGW_\varepsilon$ is finite
    and the infimum is attained.
\end{proposition}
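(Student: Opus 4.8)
The plan is to argue by the direct method of the calculus of variations: first show that $\UMGW_\eps$ is finite, then that every minimizing sequence has uniformly bounded total mass, extract a weakly convergent subsequence from compactness of $\vX$, and finally pass to the limit using lower semi-continuity of each of the three terms of $F_\eps$. For \emph{finiteness}, I would evaluate $F_\eps$ on a product measure. If the $\mu_i$ are probability measures, then $\pi \coloneqq \mu^\otimes \in \p(\vX)$ has marginals $\pi_i = \mu_i$, so $D_{\phi_i}^\otimes(\pi_i,\mu_i) = D_{\phi_i}(\mu_i\otimes\mu_i,\mu_i\otimes\mu_i) = 0$ and $\KL^\otimes(\pi,\mu^\otimes) = 0$, whence $F_\eps(\mu^\otimes) = \int_{\vX^2} c \dx\mu^\otimes \dx\mu^\otimes$, which is finite since $c$ is real valued and, for the cost \cref{eq:cost}, continuous, hence bounded on the compact set $\vX^2$. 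In the unbalanced case one instead takes a product of suitably rescaled $\mu_i$ that matches the prescribed marginal at each balanced index, so that the remaining $D_{\phi_i}^\otimes$ still vanish while $\KL^\otimes$ stays finite by absolute continuity with respect to $\mu^\otimes$. Thus $\UMGW_\eps < \infty$.

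For the \emph{mass bound}, let $(\pi_n)_n \subset \M^+(\vX)$ be a minimizing sequence, set $\Lambda \coloneqq \sup_n F_\eps(\pi_n) < \infty$, and note that $\pi_n(\vX) = (\pi_n)_i(X_i)$ for every $i$. Since every summand of $F_\eps$ is non-negative, it suffices to control $\pi_n(\vX)$ through a single coercive term. If $\eps > 0$, then by Jensen's inequality $\KL^\otimes(\pi_n,\mu^\otimes) = \KL(\pi_n\otimes\pi_n,\mu^\otimes\otimes\mu^\otimes) \ge h\bigl(\pi_n(\vX)^2\bigr)$ for the superlinear function $h(s) \coloneqq s\log(s/m^2) - s + m^2$ with $m \coloneqq \mu^\otimes(\vX)$, so $\eps\,\KL^\otimes(\pi_n,\mu^\otimes) \le \Lambda$ forces $\pi_n(\vX)$ to stay bounded. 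If instead $\sum_{i=1}^N (\phi_i)'_\infty > 0$, fix $i$ with $(\phi_i)'_\infty > 0$; since $\phi_i \ge 0$ and $\phi_i(t)/t \to (\phi_i)'_\infty > 0$, the functional $\rho \mapsto D_{\phi_i}(\rho\otimes\rho,\mu_i\otimes\mu_i)$ is coercive in the total mass $\rho(X_i)$ (for $\phi_i = \phi_{\bal}$ it in fact pins $(\pi_n)_i = \mu_i$, and for $\phi_i = \phi_{\ent}$ it is superlinearly coercive), so $D_{\phi_i}^\otimes((\pi_n)_i,\mu_i) \le \Lambda$ again bounds $\pi_n(\vX)$. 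Either way $M \coloneqq \sup_n \pi_n(\vX) < \infty$.

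For \emph{compactness and lower semi-continuity}, observe that as a finite product of compact metric spaces $\vX$ is itself a compact metric space, so $\{\sigma \in \M^+(\vX) : \sigma(\vX) \le M\}$ is sequentially weakly compact; passing to a subsequence, $\pi_{n_k} \weakly \pi^* \in \M^+(\vX)$. The uniform mass bound yields $\pi_{n_k}\otimes\pi_{n_k} \weakly \pi^*\otimes\pi^*$ on $\vX^2$, and, since each $P_i$ is continuous, $(\pi_{n_k})_i \weakly \pi^*_i$ and hence $(\pi_{n_k})_i\otimes(\pi_{n_k})_i \weakly \pi^*_i\otimes\pi^*_i$. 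Now $\sigma \mapsto \int_{\vX^2} c \dx\sigma$ is weakly lower semi-continuous because $c \ge 0$ is lower semi-continuous, hence an increasing pointwise supremum of continuous functions (use monotone convergence); and $D_{\phi_i}$ and $\KL$ are jointly weakly lower semi-continuous by \cite[Cor~2.9]{LMS18}, with the second arguments $\mu_i\otimes\mu_i$ and $\mu^\otimes\otimes\mu^\otimes$ held fixed. Adding these three lower semi-continuity estimates and using superadditivity of $\liminf$ together with non-negativity of the summands gives $F_\eps(\pi^*) \le \liminf_k F_\eps(\pi_{n_k}) = \UMGW_\eps$, so $\pi^*$ attains the infimum.

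The step requiring genuine care is the mass bound: one must verify that the hypothesis ``$\eps > 0$ or $\sum_{i=1}^N (\phi_i)'_\infty > 0$'' is exactly what prevents the total mass of a minimizing sequence from escaping to infinity, which comes down to the quantitative coercivity of $\KL$ and of the $D_{\phi_i}$ in the total mass. The remaining ingredients---weak sequential compactness on the compact space $\vX$, stability of products under weak convergence of mass-bounded sequences, and lower semi-continuity of the integral of a non-negative lower semi-continuous integrand---are standard, the last two carrying most of the routine bookkeeping.
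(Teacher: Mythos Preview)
Your proof is correct and follows essentially the same direct-method route as the paper: coercivity of $F_\eps$ via Jensen's inequality on the divergence terms, weak sequential compactness on the compact product space, stability of tensor products under weak convergence, and weak lower semi-continuity of the integral term (via monotone approximation of $c$) and of the Csisz\'ar divergences. The only cosmetic difference is that the paper handles coercivity uniformly---bounding all divergence terms simultaneously by $\|\pi\|_{\TV}^2$ times a bracket tending to $\sum_i (\phi_i)'_\infty$ (plus $\eps\cdot\infty$)---whereas you split into the two cases of the hypothesis and use a single coercive term in each; both arguments are equally valid.
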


The statement can be shown by
following the lines in the proof of \cite[Prop~7]{SejViaPey21}.
To make the paper self-contained,
we give the proof in the appendix.

\section{Bi-convex Relaxation}\label{sec:bi-convex-relax}
In this section, we consider a bi-convex relaxation of $\UMGW_\eps$.
If the cost function belongs to a certain class of conditionally negative definite functions,
then we will see that the relaxation of the balanced problem becomes tight, that is, the values and minimizers of the original problem and the relaxed problem coincide.
We introduce this class of cost functions, before we deal with the bi-convex relaxation.

\subsection{Conditionally Negative Semi-definiteness}\label{sec:cond-negat-semi}
In the following, we consider the results from 
\cite{titouan2020co,MarLip18}
from a multi-marginal point of view.
Recall that a symmetric matrix $A \in \mathbb R^{d \times d}$ is 
\emph{negative semi-definite} if $\alpha^\tT A \alpha \le 0$ for all $\alpha  \coloneqq (\alpha(i))_{i=1}^d \in \mathbb R^{d}$.
This is equivalent to the fact that $A$ has only non-positive eigenvalues.
Furthermore, $A \in \mathbb R^{d \times d}$ is \emph{conditionally negative semi-definite} on a subspace $V \subset \mathbb R^{d}$
if $v^\tT A v \le 0$ for all $v \in V$. 
If the columns of $B \in  \mathbb R^{d \times \dim V}$ form an orthonormal basis of $V$, this is equivalent
to $\alpha^\tT B^\tT A B \alpha \le 0$ for all $\alpha \in \mathbb R^{\dim V}$, 
i.e.\ to the negative semi-definiteness of $B^\tT A B \in  \mathbb R^{\dim V \times \dim V}$.

A continuous, symmetric function $k\colon X \times X \rightarrow \mathbb R$ 
is \emph{negative definite}, if for 
all points
$x^1, \ldots, x^d \in X$ and all $d \in \mathbb N$, 
the matrix
$
\left(k(x^m,x^n) \right)_{m,n=1}^d
$
is negative semi-definite. It is called \emph{conditionally negative definite} (of order 1) 
if this holds true on the subspace
\begin{equation*}
V_{{\bf 1}_d}  \coloneqq \{\alpha \coloneqq (\alpha(j))_{j=1}^d \in \mathbb R^d: \alpha^\tT \mathbf 1_d = 0 \},
\end{equation*}
see \cite{Sch38,Wendland:2004wd}.
Equivalently, the above function $k \colon X \times X \to \R$ is
conditionally negative definite
if  
\begin{equation*}
  \sum_{m, n = 1}^d
  k(x^{m}, x^{n}) \,
  \alpha(x^{m}) \alpha(x^{n}) \le 0
\end{equation*}
for all  $\mathcal X \coloneqq \{x^m \in X: m = 1,\ldots,d \}$,  $d \in \N$,
and for all $\alpha \in \mathcal M(\mathcal X)$
satisfying $\alpha(\mathcal X) = 0$.
Typical conditionally negative definite functions are the Euclidean metric $\|x-y\|_2$ on $\mathbb R^d$
and spherical distances $d(x,y)$, see \cite{BBS2007}.
For further examples, we refer to \cite{FerLauHau15}.
We will need the following auxiliary lemma.

\begin{lemma}  \label{lem:tens-cnd}
  Let $A_i \in \R^{d_i\times d_i}$ be symmetric, and let $V_i \subset \R^{d_i}$, $i=1,2$, be subspaces.
  Then $-(A_1 \otimes A_2)$ is conditionally negative semi-definite
  on $V_1 \otimes V_2$ 
  if and only if
  either
  $A_i$, $i=1,2$, or $-A_i$, $i=1,2$, are 
  conditionally negative semi-definite
  on $V_i$, $i=1,2$.
\end{lemma}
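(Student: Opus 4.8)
The plan is to strip the statement down to a purely linear-algebraic fact about Kronecker products of symmetric matrices and then run an eigenvalue/test-vector argument. First I would choose matrices $B_i \in \R^{d_i \times \dim V_i}$ whose columns form an orthonormal basis of $V_i$, and use the standard fact that then the columns of $B_1 \otimes B_2$ form an orthonormal basis of $V_1 \otimes V_2 \subset \R^{d_1 d_2}$. By the characterization of conditional semi-definiteness recalled just before the lemma, $-(A_1 \otimes A_2)$ is conditionally negative semi-definite on $V_1 \otimes V_2$ if and only if $(B_1 \otimes B_2)^\tT \bigl(-(A_1 \otimes A_2)\bigr)(B_1 \otimes B_2)$ is negative semi-definite, and the mixed-product property of the Kronecker product yields
\[
  (B_1 \otimes B_2)^\tT (A_1 \otimes A_2)(B_1 \otimes B_2)
  = (B_1^\tT A_1 B_1) \otimes (B_2^\tT A_2 B_2) \eqqcolon C_1 \otimes C_2,
\]
with $C_i \coloneqq B_i^\tT A_i B_i$ symmetric; likewise $A_i$ (resp.\ $-A_i$) is conditionally negative semi-definite on $V_i$ exactly when $C_i$ is negative (resp.\ positive) semi-definite. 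Thus the lemma becomes the claim: $C_1 \otimes C_2$ is positive semi-definite if and only if $C_1$ and $C_2$ are both positive semi-definite or both negative semi-definite.

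For the \emph{if} direction I would diagonalize $C_i = Q_i \Lambda_i Q_i^\tT$ with $Q_i$ orthogonal, so that $C_1 \otimes C_2 = (Q_1 \otimes Q_2)(\Lambda_1 \otimes \Lambda_2)(Q_1 \otimes Q_2)^\tT$ is orthogonally similar to the diagonal matrix $\Lambda_1 \otimes \Lambda_2$, whose entries are the products $\lambda_i \mu_j$ of eigenvalues of $C_1$ and $C_2$. If $C_1, C_2 \succeq 0$ all $\lambda_i, \mu_j \ge 0$ and hence all products are $\ge 0$; if $C_1, C_2 \preceq 0$, all $\lambda_i,\mu_j\le 0$ and the products are again $\ge 0$ (equivalently, write $C_1 \otimes C_2 = (-C_1)\otimes(-C_2)$ and invoke the first case). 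Either way $C_1 \otimes C_2 \succeq 0$.

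The real content is the \emph{only if} direction, which I would argue via rank-one test vectors together with the identity $(v \otimes w)^\tT (C_1 \otimes C_2)(v \otimes w) = (v^\tT C_1 v)(w^\tT C_2 w)$: if $C_1 \otimes C_2 \succeq 0$, then for any direction $v$ with $v^\tT C_1 v > 0$ one gets $w^\tT C_2 w \ge 0$ for all $w$, i.e.\ $C_2 \succeq 0$, and symmetrically $v^\tT C_1 v < 0$ forces $C_2 \preceq 0$; the roles of $C_1$ and $C_2$ can then be swapped. A short case distinction finishes it: if $C_1$ is definite of one sign (and nonzero), the matching conclusion on $C_2$ plus the reverse implication gives that $C_1$ and $C_2$ are semi-definite of the same sign; if $C_1$ is indefinite, the two implications above force $C_2 = 0$, and symmetrically for $C_2$. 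I expect the bookkeeping of this case analysis to be the main obstacle — in particular the degenerate subcase in which one restricted form $C_i$ vanishes identically (so $C_1 \otimes C_2 = 0$ is positive semi-definite regardless of the other), where one must use the convention that the zero matrix is both positive and negative semi-definite; in the applications here the relevant $C_i$ come from conditionally negative definite kernels and are automatically semi-definite, so this subcase is either vacuous or handled by hand.
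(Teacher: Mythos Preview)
Your proposal is correct and follows essentially the same route as the paper: reduce to the compressed matrices $C_i = B_i^\tT A_i B_i$ via the mixed-product identity, and then argue on $C_1 \otimes C_2$. The paper's proof is terser --- it simply notes that the eigenvalues of $-(C_1 \otimes C_2)$ are the negative pairwise products $-\lambda_i \mu_j$ and reads off both directions from this, whereas you diagonalize for the ``if'' direction (same content) and use rank-one test vectors $(v\otimes w)$ for the ``only if'' direction. The rank-one argument and the eigenvalue argument are interchangeable here; your version is slightly more explicit about the case analysis, and you correctly flag the degenerate subcase $C_i = 0$, which the paper's one-line eigenvalue argument glosses over (and which is harmless since only the ``if'' direction is invoked in the subsequent proposition).
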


\begin{proof}
  Let $B_i \in \R^{d_i \times \dim V_i}$ be matrices
  whose columns form an orthonormal bases of $V_i$, $i=1,2$.
  Then $B_1 \otimes B_2$ is a basis of $V_1 \otimes V_2$
  and   $-(A_1 \otimes A_2)$ is conditionally negative semi-definite on $V_1 \otimes V_2$
  if and only if
  \begin{equation*}
    -(B_1 \otimes B_2)^\tT
    (A_1 \otimes A_2)
    (B_1 \otimes B_2)
    =
    -(B_1^\tT A_1 B_1)
    \otimes
    (B_2^\tT A_2 B_2)
  \end{equation*}
  is negative semi-definite.
  Considering that
  the eigenvalues on the right-hand side are the negative pairwise products
  of the eigenvalues of $B_i^\tT A_i B_i$,  $i=1,2$,
  we obtain the assertion.
\end{proof}

In our application, 
the cost function is given on the compact Cartesian space $\vX \coloneqq \bigtimes_{i=1}^N X_i$, $N \ge 2$
and often has a special structure as the one in \cref{eq:cost}.
This gives rise to the following definition. 
For $M \coloneqq (M_i)_{i=1}^N \in \N^N$,
let $[M] \coloneqq \{ m = (m_1, \ldots,m_N) \in \N^N : 1 \le m_i \le M_i \}$.
A continuous, symmetric function $c \colon \vX \times \vX \to \R$ is called
\emph{marginal conditionally negative definite}
if  
\begin{equation} \label{mcd}
  \sum_{m, n \in [ M]}
  c(\vx^{m}, \vx^{n}) \,
  \alpha(\vx^{m}) \alpha(\vx^{n}) \le 0,
  \qquad \vx^m \coloneqq \left(x_1^{m_1}, \ldots, x_N^{m_N} \right) \in \bigtimes_{i=1}^N \mathcal X_i,
\end{equation}
for all  $\mathcal X_i \coloneqq \{x_i^{m_i} \in X_i: m_i = 1,\ldots,M_i \}$,  $M \in \N^N$
and for all $\alpha \in \mathcal M(\bigtimes{\!}_{i=1}^N \mathcal X_i)$
satisfying $(P_i)_\# \alpha \equiv 0$ for $1 \le i \le N$.
Clearly, a conditionally negative definite function is
marginal conditionally negative definite, but not conversely.
Then we have the following proposition.

\begin{proposition}
  Let $c \colon \vX \times \vX \to \R$ be defined by \cref{eq:cost},
  where $d_i$, $i=1,\dots,N$, are conditionally negative semi-definite functions.
  Then $c$ is marginal conditionally negative definite.
\end{proposition}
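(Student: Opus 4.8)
The statement reduces to checking the defining inequality \cref{mcd} for $c$ from \cref{eq:cost}. Fix finite sets $\mathcal X_i \coloneqq \{x_i^{m_i}: m_i = 1,\dots,M_i\}$, let $\alpha \in \M(\bigtimes_{i=1}^N \mathcal X_i)$ with $(P_i)_\#\alpha \equiv 0$ for every $i$, and expand
\begin{equation*}
  \sum_{m,n \in [M]} c(\vx^m,\vx^n)\,\alpha(\vx^m)\alpha(\vx^n)
  = \sum_{i,j=1}^N c_{ij} \sum_{m,n \in [M]} |d_i(x_i^{m_i},x_i^{n_i}) - d_j(x_j^{m_j},x_j^{n_j})|^2\, \alpha(\vx^m)\alpha(\vx^n).
\end{equation*}
Since $c_{ij} \ge 0$, it suffices to show each inner double sum is $\le 0$. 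The plan is to expand the square $|d_i - d_j|^2 = d_i^2 - 2 d_i d_j + d_j^2$ and treat the three resulting contributions separately, exploiting the vanishing marginals of $\alpha$.

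For the mixed term $-2 d_i(x_i^{m_i},x_i^{n_i})\, d_j(x_j^{m_j},x_j^{n_j})$, I would view the summation over all indices except the $i$-th and $j$-th as a partial marginalization of $\alpha$. Writing $\beta$ for the signed measure on $\mathcal X_i \times \mathcal X_j$ obtained by pushing $\alpha$ forward under $(P_i,P_j)$, the mixed contribution becomes $-2\sum \beta(x_i^{m_i},x_j^{m_j})\,\beta(x_i^{n_i},x_j^{n_j})\, d_i d_j$, which is precisely $-2$ times a quadratic form with the kernel matrix $D_i \otimes D_j$ evaluated on the vector $\beta$ (viewed in $\mathbb R^{M_i M_j}$ via the obvious identification), where $D_i \coloneqq (d_i(x_i^{m_i},x_i^{n_i}))_{m_i,n_i}$. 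Because $d_i$ and $d_j$ are conditionally negative semi-definite, $D_i$ and $D_j$ are conditionally negative semi-definite on $V_{\mathbf 1_{M_i}}$ and $V_{\mathbf 1_{M_j}}$ respectively; by \cref{lem:tens-cnd} (applied with $A_1 = D_i$, $A_2 = D_j$ — both conditionally negative semi-definite), $-(D_i \otimes D_j)$ is conditionally negative semi-definite on $V_{\mathbf 1_{M_i}} \otimes V_{\mathbf 1_{M_j}}$. The key point is that $\beta$ lies in this tensor subspace: the marginal conditions $(P_i)_\#\alpha \equiv 0$ and $(P_j)_\#\alpha \equiv 0$ imply $\beta$ has both marginals zero, i.e.\ $\beta^\tT(\mathbf 1_{M_i} \otimes \mathbf e) = 0$ and $\beta^\tT(\mathbf e \otimes \mathbf 1_{M_j}) = 0$ for all $\mathbf e$, which is exactly membership in $V_{\mathbf 1_{M_i}} \otimes \mathbb R^{M_j} \cap \mathbb R^{M_i} \otimes V_{\mathbf 1_{M_j}} = V_{\mathbf 1_{M_i}} \otimes V_{\mathbf 1_{M_j}}$. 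Hence the mixed term is $-2\,\beta^\tT(D_i \otimes D_j)\beta \le 0$.

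For the two pure terms, e.g.\ $\sum_{m,n} d_i(x_i^{m_i},x_i^{n_i})\,\alpha(\vx^m)\alpha(\vx^n)$, summing out all coordinates except $i$ turns $\alpha$ into its $i$-th marginal $(P_i)_\#\alpha$, which is identically zero by hypothesis; so each pure term vanishes outright (not merely bounded — it is exactly $0$). Combining, the inner double sum equals $-2\,\beta^\tT(D_i \otimes D_j)\beta \le 0$, and summing against the nonnegative weights $c_{ij}$ gives \cref{mcd}. Continuity and symmetry of $c$ are immediate from those of the $d_i$. The main technical obstacle is the bookkeeping in the mixed term: one must carefully verify that partial marginalization of $\alpha$ produces a signed measure on the two-index product with both marginals vanishing, so that \cref{lem:tens-cnd} applies — once that identification is set up cleanly, the rest is a direct application of the lemma.
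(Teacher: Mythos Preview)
Your proof is correct and follows essentially the same approach as the paper: reduce to a single summand $|d_i - d_j|^2$, expand the square, observe that the pure terms vanish because they factor through the zero marginals $(P_i)_\#\alpha$, and handle the mixed term via the pushforward $\beta = (P_i,P_j)_\#\alpha$ together with \cref{lem:tens-cnd}. One small slip: in the pure-term paragraph you wrote $d_i$ where the expansion gives $d_i^2$, but the vanishing argument is insensitive to this since it works for any function depending only on the $i$th coordinates.
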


\begin{proof}
  Since the sum of marginal conditionally negative definite functions keeps this property,
  it remains to restrict our attention to the function
  $$
  |d_i(x_i,x_i') - d_j(x_j,x_j')|^2 = d_i^2(x_i,x_i') + d_j^2(x_j,x_j') - 2 d_i(x_i,x_i') d_j(x_j,x_j').
  $$
  By definition of $\alpha$, we have that
  $$
  \sum_{m, n \in [M]}
  d_i^2(x_i^{m_i}, x_i^{n_i}) \,
  \alpha(\vx^{m}) \alpha(\vx^{n}) 
  = \sum_{m_i, n_i} d_i^2(x_i^{m_i}, x_i^{n_i})
  \, \bigl[(P_i)_\# \alpha \bigr] (x_i^{m_i})
  \, \bigl[(P_i)_\# \alpha \bigr] (x_i^{n_i})
  = 0
  $$
  and that $-d_i(x_i,x_i') d_j(x_j,x_j')$ is negative semi-definite on $\vX \times \vX$
  if and only if this is true on $(X_i \times X_j) \times (X_i \times X_j)$.
  For the latter, it remains to show that 
  $-( d_i(x_i^{m_i}, x_i^{n_i}))_{m_i,n_i=1}^{M_i} \otimes (d_j(x_j^{m_j}, x_j^{n_j}) )_{m_j,n_j=1}^{M_j}$
  is negative semi-definite on 
  $V_{{\bf 1}_{M_i}} \otimes V_{{\bf 1}_{M_j}}$. By \cref{lem:tens-cnd} this is indeed the case.
\end{proof}

\subsection{Tight Bi-convex Relaxation}\label{subsec:bi-convex-relax}
The objective $F_\eps$ in $\UMGW_\eps$ is quadratic in
$\pi$.
Inspired from \cite{SejViaPey21} for GW distances, we propose
the following bi-convex relaxation to tackle $\UMGW_\eps$ numerically:
\begin{align*}  
  &\UMGW_\eps^{\bi}(\XX_1,\dotsc,\XX_N)
  \coloneqq \inf_{\pi,\gamma \in \M^+(\vX)}
  F^{\bi}_\eps(\pi,\gamma)
  \\
  &F^{\bi}_\eps(\pi,\gamma)
  \coloneqq
    \int_{\vX^2} c(\vx,\vx') \dx \pi(\vx) \dx \gamma(\vx')
    + \sum_{i=1}^N D_{\phi_i}(\pi_i \otimes \gamma_i, \mu_i \otimes \mu_i)
    + \eps \KL\bigl(\pi \otimes \gamma,(\mu^\otimes)^2\bigr).
\end{align*}
By construction,
the relaxation always satisfies $\UMGW_\eps^{\bi} \le \UMGW_\eps$.
Moreover, in the balanced case, it
becomes tight
for marginal conditionally negative definite cost functions,
meaning that any minimizer $(\pi^*,\gamma^*)$ of $\UMGW_\eps^{\bi}$
yields the minimizers $\pi^*$ and $\gamma^*$ of $\UMGW_\eps$.

\begin{proposition}[Tightness of Relaxation]  \label{thm:tight}
  Let $\phi_i \coloneqq \phi_{ \bal }$, $i=1,\ldots,N$, $\eps \ge 0$
  and  $c$ be marginal conditionally negative definite.
  Then, $\UMGW_\eps=\UMGW_\eps^{\bi}$
  and   every minimizer $(\pi^*, \gamma^*)$ of $\UMGW_\eps^{\bi}$ satisfies
  \begin{equation}
    \label{eq:equiv-min}
    F_\eps^{\bi}(\pi^*, \gamma^*)
    = F_\eps^{\bi}(\pi^*, \pi^*)
    = F_\eps^{\bi}(\gamma^*, \gamma^*)
  \end{equation}
  such that $\pi^*$ and $\gamma^*$ are minimizers of $\UMGW_\eps$.
\end{proposition}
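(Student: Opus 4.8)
The plan is to exploit the bi-convexity of $F^{\bi}_\eps$ together with a symmetrization argument, reducing everything to the marginal conditional negative definiteness of $c$. First I would record the easy direction: for any $\pi \in \M^+(\vX)$ with $\pi_i = \mu_i$ (so that the divergence terms in $F^{\bi}_\eps(\pi,\pi)$ and $F_\eps(\pi)$ agree, both collapsing the balanced $D_{\phi_{\bal}}$ constraint and matching the $\KL^\otimes$ regularizer with $\KL(\pi\otimes\pi,(\mu^\otimes)^2)$), one has $F^{\bi}_\eps(\pi,\pi) = F_\eps(\pi)$; taking the infimum over such $\pi$ gives $\UMGW^{\bi}_\eps \le \UMGW_\eps$, while the reverse inequality $\UMGW^{\bi}_\eps \le \UMGW_\eps$ is noted in the text and the nontrivial content is the opposite one, $\UMGW_\eps \le \UMGW^{\bi}_\eps$, together with \cref{eq:equiv-min}.

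The core step is the \emph{convexity/symmetrization lemma}: I claim that for fixed $\gamma$, the map $\pi \mapsto F^{\bi}_\eps(\pi,\gamma)$ is convex (it is linear in $\pi$ in the cost term and convex in $\pi$ through the Csiszár divergences and $\KL$, by joint convexity of $D_\phi$), and symmetric in the two arguments in the sense that $F^{\bi}_\eps(\pi,\gamma) = F^{\bi}_\eps(\gamma,\pi)$ whenever $c$ is symmetric. Hence if $(\pi^*,\gamma^*)$ is a minimizer, then by bi-convexity the averaged pair behaves well; more precisely I would consider $\eta \coloneqq \tfrac12(\pi^*+\gamma^*)$ and show $F^{\bi}_\eps(\eta,\eta) \le \tfrac12 F^{\bi}_\eps(\pi^*,\pi^*) + \tfrac12 F^{\bi}_\eps(\gamma^*,\gamma^*)$ is not quite enough on its own — the genuinely useful inequality is the \emph{parallelogram-type} estimate
\begin{equation*}
  F^{\bi}_\eps(\pi^*,\pi^*) + F^{\bi}_\eps(\gamma^*,\gamma^*)
  \le 2\, F^{\bi}_\eps(\pi^*,\gamma^*),
\end{equation*}
which is exactly where marginal conditional negative definiteness enters. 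Indeed, expanding the bilinear cost term, $\int c \dx\pi^*\dx\pi^* + \int c\dx\gamma^*\dx\gamma^* - 2\int c\dx\pi^*\dx\gamma^* = \iint c(\vx,\vx')\,\dx(\pi^*-\gamma^*)(\vx)\,\dx(\pi^*-\gamma^*)(\vx')$, and since $\pi^*,\gamma^*$ share the same marginals $\mu_i$ in the balanced case, the signed measure $\alpha \coloneqq \pi^* - \gamma^*$ satisfies $(P_i)_\#\alpha \equiv 0$ for all $i$; by \cref{mcd} (after a standard discretization/approximation of $\alpha$ by finitely supported signed measures on product grids, using continuity of $c$ and weak density) this double integral is $\le 0$. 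The divergence and $\KL$ terms contribute nonnegatively to the same parallelogram combination by convexity (for $D_\phi$ use $D_\phi(a,\mu\otimes\mu)+D_\phi(b,\mu\otimes\mu) \le \ldots$ — here one must be a little careful since the arguments are $\pi_i\otimes\pi_i$, $\gamma_i\otimes\gamma_i$ versus $\pi_i\otimes\gamma_i$; but in the balanced case all of these equal $\mu_i\otimes\mu_i$, so these terms are identically zero and drop out entirely, and similarly $\KL(\pi^*\otimes\pi^*,(\mu^\otimes)^2)=\KL(\gamma^*\otimes\gamma^*,(\mu^\otimes)^2)=\KL(\pi^*\otimes\gamma^*,(\mu^\otimes)^2)=0$). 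This simplification — that in the balanced setting the penalty terms vanish — makes the argument cleaner than the general unbalanced one.

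Putting these together: the parallelogram inequality gives $F^{\bi}_\eps(\pi^*,\pi^*) + F^{\bi}_\eps(\gamma^*,\gamma^*) \le 2F^{\bi}_\eps(\pi^*,\gamma^*) = 2\UMGW^{\bi}_\eps$, while optimality of $(\pi^*,\gamma^*)$ forces $F^{\bi}_\eps(\pi^*,\pi^*) \ge \UMGW^{\bi}_\eps$ and $F^{\bi}_\eps(\gamma^*,\gamma^*) \ge \UMGW^{\bi}_\eps$; hence both equal $\UMGW^{\bi}_\eps$ and \cref{eq:equiv-min} follows. Then, since $F^{\bi}_\eps(\pi^*,\pi^*) = F_\eps(\pi^*)$ by the first step (note $\pi^*$ is automatically balanced, $\pi^*_i = \mu_i$, as otherwise the divergence term is $+\infty$ and the value could not be minimal — $\UMGW_\eps$ being finite by \cref{prop:existence} when $\eps>0$, and in the case $\eps = 0$ one argues directly that a non-balanced $\pi^*$ cannot be optimal since a balanced competitor exists with finite value), we get $\UMGW_\eps \le F_\eps(\pi^*) = \UMGW^{\bi}_\eps$, and combined with $\UMGW^{\bi}_\eps \le \UMGW_\eps$ we conclude $\UMGW_\eps = \UMGW^{\bi}_\eps$ and that $\pi^*$ (and symmetrically $\gamma^*$) minimizes $\UMGW_\eps$. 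The main obstacle I anticipate is the passage from the finite-support inequality \cref{mcd} to the integral inequality $\iint c\,\dx\alpha\,\dx\alpha \le 0$ for the continuous signed measure $\alpha = \pi^*-\gamma^*$: this requires approximating $\alpha$ in a way that preserves the vanishing-marginal constraint $(P_i)_\#\alpha \equiv 0$ while converging suitably against the continuous cost $c$; a clean route is to push $\alpha$ forward under finite measurable partitions of each $X_i$ (so marginals stay zero by construction) and invoke uniform continuity of $c$ on the compact space $\vX\times\vX$, plus dominated convergence, mirroring the classical argument for conditionally negative definite kernels.
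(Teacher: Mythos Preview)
Your overall strategy matches the paper's: reduce to the inequality $\iint c\,\dx\alpha\,\dx\alpha \le 0$ for $\alpha = \pi^* - \gamma^*$ via a parallelogram argument (the paper phrases it as a contradiction from $F_\eps^{\bi}(\pi^*,\gamma^*)\le F_\eps^{\bi}(\pi^*,\pi^*)$ and $F_\eps^{\bi}(\pi^*,\gamma^*)\le F_\eps^{\bi}(\gamma^*,\gamma^*)$, but the content is identical), then invoke marginal conditional negative definiteness.

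There is, however, one genuine error. You claim that in the balanced case
\[
\KL\bigl(\pi^*\otimes\pi^*,(\mu^\otimes)^2\bigr)
=\KL\bigl(\gamma^*\otimes\gamma^*,(\mu^\otimes)^2\bigr)
=\KL\bigl(\pi^*\otimes\gamma^*,(\mu^\otimes)^2\bigr)=0.
\]
This is false: balancedness forces $\pi^*_i=\mu_i$ for each marginal, but certainly not $\pi^*=\mu^\otimes$, so these entropy terms are typically strictly positive. What actually makes the argument work is the tensorization identity for probability measures (this is the step the paper invokes, citing \cite[Prop~9]{SejViaPey21}):
\[
\KL\bigl(\pi\otimes\gamma,(\mu^\otimes)^2\bigr)=\KL(\pi,\mu^\otimes)+\KL(\gamma,\mu^\otimes).
\]
Applying this to all three terms shows that the $\KL$ contributions on both sides of your parallelogram inequality are $2\eps\KL(\pi^*,\mu^\otimes)+2\eps\KL(\gamma^*,\mu^\otimes)$ and hence cancel exactly. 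Your fallback to ``convexity'' would not have rescued this either, since $\pi\mapsto\KL(\pi\otimes\pi,\cdot)$ is not convex in $\pi$ (the tensor is quadratic). With this correction the proof goes through and is essentially the paper's argument.

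Your concern about lifting \cref{mcd} from finite sums to the continuous integral $\iint c\,\dx\alpha\,\dx\alpha$ is legitimate; the paper simply asserts the implication without details. Your proposed route---push $\alpha$ forward through finite measurable product partitions of the $X_i$, which preserves the vanishing-marginal constraint by construction, and use uniform continuity of $c$ on the compact $\vX\times\vX$---is the standard one and works.
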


\begin{proof}
  The  balanced setting ensures that
  all occurring measures are probability measures.
  The KL divergence then splits into
  $\KL(\pi \otimes \gamma, \mu^\otimes \otimes \mu^\otimes)
  = \KL(\pi, \mu^\otimes) + \KL(\gamma, \mu^\otimes)$
  by \cite[Prop~9]{SejViaPey21}.
  Furthermore,
  the values in \cref{eq:equiv-min} always satisfy
  \begin{equation*}
    F_\eps^{\bi}(\pi^*, \gamma^*)
    \le
    F_\eps^{\bi}(\pi^*, \pi^*)
    \qquad\text{and}\qquad
    F_\eps^{\bi}(\pi^*, \gamma^*)
    \le
    F_\eps^{\bi}(\gamma^*, \gamma^*)
  \end{equation*}
  Incorporating the KL splitting,
  and defining
  $K(\pi, \gamma) \coloneqq \int_{\vX^2} c \dx \pi \dx \gamma$,
  we obtain
  \begin{align}
    K(\pi^*, \gamma^*) + \eps \KL(\pi^*, \mu^\otimes) + \eps \KL(\gamma^*, \mu^\otimes)
    &\le
    K(\pi^*, \pi^*) + 2\eps \KL(\pi^*, \mu^\otimes), \label{eq:tight_ineq1}
    \\
    K(\pi^*, \gamma^*) + \eps \KL(\pi^*, \mu^\otimes) + \eps \KL(\gamma^*, \mu^\otimes)
    &\le
    K(\gamma^*, \gamma^*) + 2\eps \KL(\gamma^*, \mu^\otimes). \label{eq:tight_ineq2}
  \end{align}
  Now, assume that \cref{eq:equiv-min} is false,
  i.e.\ at least one of the inequalities \cref{eq:tight_ineq1,eq:tight_ineq2} is strict.
  Summing both inequalities then yields
  \begin{equation}
    \label{eq:contra}
    0 < K(\pi^* - \gamma^*, \pi^* - \gamma^*)
    =
    K(\alpha,\alpha)
    \qquad\text{with}\qquad
    \alpha \coloneqq \pi^* - \gamma^*.
  \end{equation}
  Since $\pi^*, \gamma^* \in \Pi(\mu_1, \dots, \mu_N)$,
  we have 
  $(P_i)_\# \alpha \equiv 0$, $i=1,\ldots,N$.
  Then the marginal conditionally negative definiteness of $c$
  ensures $K(\alpha, \alpha) \le 0$.
  This is however a contradiction to \cref{eq:contra}.
\end{proof}

Note that the proposition does not immediately follow from \cite[Thm~2]{SejViaPey21}
since the proof given there is not reproducible in the non-definite case. More precisely, 
$x^\tT A x = 0$, $x^\tT \bf 1 = 0$ does not imply $x \in \text{ker} A$ for a conditionally negative semi-definite matrix $A$.

The bi-convex relaxation $\UMGW_\eps$ encourages to
use an alternating minimization scheme over $\pi$ and $\gamma$.
Here the minimization over one variable corresponds to
a multi-marginal optimal transport problem
with a specific cost function.
In the following proposition, 
we restrict our attention to the divergences from \cref{tab:entro}.

\begin{proposition}   \label{prop:relax_as_ot}
  Let $\phi_i$, $i =1,\dotsc,N$, be an entropy function from \cref{tab:entro},
  and set $\mathcal I \coloneqq \{ i : \phi_i = \phi_{\ent} \}$.
  Then, 
  for fixed $\gamma \in \M^+(X)$ 
  with $D_{\phi_i}(\gamma_i, \mu_i) < \infty$
  and $\KL(\gamma, \mu) < \infty$,
  the minimization over $\pi$ in $\UMGW_\eps^{\bi}$
  becomes the multi-marginal transport problem
  \begin{equation*}
    \argmin_{\pi \in \M^+(\vX)} F^{\bi}_{\eps}(\pi,\gamma)
    \coloneqq
     \int_X c_{\gamma}(\vx) \dx \pi(\vx)
    + \|\gamma\|_{\TV} \, \Bigl( \sum_{i=1}^N D_{\phi_i}(\pi_i,\mu_i) \Bigr)
    + \eps \|\gamma\|_{ \TV } \KL(\pi, \mu^\otimes)
  \end{equation*}
  with
  \begin{equation}
    \label{eq:ot-cost}
    c_{\gamma}(\vx) 
    \coloneqq
    \int_{\vX} c(\vx,\vx') \dx \gamma(\vx')
    + \Bigl[ \sum_{i \in \mathcal I}
    \int_{X_i}
    \log \bigl(\tfrac{\dx \gamma_i}{\dx \mu_i}\bigr) \dx \gamma_i
    \Bigr]
    + \eps
    \int_{\vX}
    \log \bigl(\tfrac{\dx \gamma}{\dx \mu^\otimes}\bigr) \dx \gamma. 
  \end{equation}
\end{proposition}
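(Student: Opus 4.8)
The plan is to expand the functional $F^{\bi}_\eps(\pi,\gamma)$ term by term while holding $\gamma$ fixed, and to collect everything that depends on $\pi$. The strategy rests on the fact that, for fixed $\gamma$, several of the terms are either constants (independent of $\pi$) or decompose into a part linear in $\pi$ plus a $\pi$-independent remainder.

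First I would treat the quadratic cost term: $\int_{\vX^2} c(\vx,\vx') \dx\pi(\vx) \dx\gamma(\vx') = \int_{\vX} \bigl(\int_{\vX} c(\vx,\vx') \dx\gamma(\vx')\bigr) \dx\pi(\vx)$, which is linear in $\pi$ with the integral kernel $\int_{\vX} c(\vx,\vx')\dx\gamma(\vx')$; this is the first summand of $c_\gamma$. Next, for the marginal penalty terms I would invoke the fact that for a balanced-type splitting the divergence $D_{\phi_i}(\pi_i \otimes \gamma_i, \mu_i \otimes \mu_i)$ factors. Concretely, one uses that for the $\phi$-divergences in \cref{tab:entro} one has an identity of the form $D_{\phi_i}(\pi_i \otimes \gamma_i, \mu_i \otimes \mu_i) = \|\gamma_i\|_{\TV} D_{\phi_i}(\pi_i,\mu_i) + \|\pi_i\|_{\TV} D_{\phi_i}(\gamma_i,\mu_i)$ in the KL case (by the well-known tensorization of KL), and the simpler versions for $\phi_{\bal}$ and $\phi_{\free}$. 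Since $\|\gamma_i\|_{\TV} = \|\gamma\|_{\TV}$ for a probability-mass-consistent $\gamma$ — more precisely $\|\gamma_i\|_{\TV} = \|\gamma\|_{\TV}$ because $\gamma_i$ is a pushforward of $\gamma$ — this produces the coefficient $\|\gamma\|_{\TV}$ in front of $\sum_i D_{\phi_i}(\pi_i,\mu_i)$, plus a $\pi$-dependent term $\|\pi_i\|_{\TV} D_{\phi_i}(\gamma_i,\mu_i)$. The key point is that $D_{\phi_i}(\gamma_i,\mu_i)$ vanishes unless $\phi_i = \phi_{\ent}$ (for $\phi_{\bal}$ it is $\iota_{\{\mu_i\}}(\gamma_i)$, which is $0$ under the finiteness hypothesis; for $\phi_{\free}$ it is identically $0$), so only indices $i \in \mathcal I$ survive, and for those the term $\|\pi_i\|_{\TV}\, \KL(\gamma_i,\mu_i)$ needs to be rewritten as a linear functional of $\pi$. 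Using $\|\pi_i\|_{\TV} = \|\pi\|_{\TV} = \int_{\vX} 1 \dx\pi$ and expanding $\KL(\gamma_i,\mu_i) = \int_{X_i} \log(\tfrac{\dx\gamma_i}{\dx\mu_i}) \dx\gamma_i - \|\gamma_i\|_{\TV} + \|\mu_i\|_{\TV}$, the leading piece $\int_{X_i}\log(\tfrac{\dx\gamma_i}{\dx\mu_i})\dx\gamma_i$ becomes a constant multiplying $\int_{\vX} 1 \dx\pi$, i.e. a contribution $\int_{X_i}\log(\tfrac{\dx\gamma_i}{\dx\mu_i})\dx\gamma_i$ added pointwise to the cost function $c_\gamma$; the trailing constants $-\|\gamma_i\|_{\TV}+\|\mu_i\|_{\TV}$ contribute further $\pi$-linear terms of the same shape, which I would absorb the same way (or note they are lower-order and do not affect the argmin up to an affine reparametrization — but here the cleanest route is to just keep them, since $\|\mu_i\|_{\TV}=1$ and $\|\gamma_i\|_{\TV}=\|\gamma\|_{\TV}$ are fixed numbers). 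This reproduces the second bracketed summand of $c_\gamma$ in \cref{eq:ot-cost}.

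Then I would handle the entropic regularizer: by \cite[Prop~9]{SejViaPey21} (the same tensorization used in the proof of \cref{thm:tight}) one has $\KL(\pi \otimes \gamma, (\mu^\otimes)^2) = \|\gamma\|_{\TV}\,\KL(\pi,\mu^\otimes) + \|\pi\|_{\TV}\,\KL(\gamma,\mu^\otimes)$. The first summand gives the coefficient $\eps\|\gamma\|_{\TV}$ in front of $\KL(\pi,\mu^\otimes)$; the second, after expanding $\KL(\gamma,\mu^\otimes) = \int_{\vX}\log(\tfrac{\dx\gamma}{\dx\mu^\otimes})\dx\gamma - \|\gamma\|_{\TV} + 1$ and using $\|\pi\|_{\TV} = \int_{\vX} 1 \dx\pi$, contributes the term $\eps\int_{\vX}\log(\tfrac{\dx\gamma}{\dx\mu^\otimes})\dx\gamma$ pointwise to $c_\gamma$ (plus the constants $-\|\gamma\|_{\TV}+1$ handled identically). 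Collecting all $\pi$-linear contributions into $\int_{\vX} c_\gamma(\vx)\dx\pi(\vx)$ yields exactly the claimed expression for $c_\gamma$ in \cref{eq:ot-cost}.

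The main obstacle I anticipate is bookkeeping the tensorization identities carefully enough to be rigorous in the possibly unbalanced, infinite-mass-but-finite setting — in particular confirming the factorizations $D_{\phi_i}(\pi_i \otimes \gamma_i, \mu_i \otimes \mu_i) = \|\gamma_i\|_{\TV}D_{\phi_i}(\pi_i,\mu_i) + \|\pi_i\|_{\TV}D_{\phi_i}(\gamma_i,\mu_i)$ for each of the three entropy functions (this is immediate for $\phi_{\bal}$ and $\phi_{\free}$, and for $\phi_{\ent}$ it follows from the chain rule $\tfrac{\dx(\pi_i\otimes\gamma_i)}{\dx(\mu_i\otimes\mu_i)}(x,y) = \tfrac{\dx\pi_i}{\dx\mu_i}(x)\tfrac{\dx\gamma_i}{\dx\mu_i}(y)$ together with $s\log(st) = s\log s + s\log t$ and Fubini), and ensuring all the singular/recession-constant corrections cancel or land as finite constants under the stated finiteness hypotheses $D_{\phi_i}(\gamma_i,\mu_i)<\infty$, $\KL(\gamma,\mu)<\infty$. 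Once those identities are in hand, the rest is the routine regrouping described above, and the finiteness hypotheses guarantee that no $\infty-\infty$ ambiguities arise.
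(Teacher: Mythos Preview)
Your overall strategy---expand $F^{\bi}_\eps(\pi,\gamma)$ term by term via tensorization identities for the divergences, then regroup into a cost linear in $\pi$, a divergence term with coefficient $\|\gamma\|_{\TV}$, and $\pi$-independent constants---is exactly the paper's approach. The gap is in the specific tensorization identity you invoke for $\KL$. The formula
\[
\KL(\alpha_1\otimes\alpha_2,\alpha_3\otimes\alpha_3)=\|\alpha_2\|_{\TV}\KL(\alpha_1,\alpha_3)+\|\alpha_1\|_{\TV}\KL(\alpha_2,\alpha_3)
\]
holds only when $\|\alpha_1\|_{\TV}=\|\alpha_3\|_{\TV}$ or $\|\alpha_2\|_{\TV}=\|\alpha_3\|_{\TV}$; this is why it suffices in the balanced proof of \cref{thm:tight}. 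In general, \cite[Prop~9]{SejViaPey21} (which the paper also uses) has the additional cross term $(\|\alpha_1\|_{\TV}-\|\alpha_3\|_{\TV})(\|\alpha_2\|_{\TV}-\|\alpha_3\|_{\TV})$. If you carry that cross term through, it exactly cancels the ``trailing constants'' $\|\pi_i\|_{\TV}(\|\mu_i\|_{\TV}-\|\gamma_i\|_{\TV})$ you propose to absorb into $c_\gamma$, leaving instead the genuinely $\pi$-independent constant $\|\mu_i\|_{\TV}(\|\mu_i\|_{\TV}-\|\gamma_i\|_{\TV})$---which is precisely what the paper records and then discards.

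Without the cross term, your $c_\gamma$ differs from the one in \cref{eq:ot-cost} by the additive constant $\sum_{i\in\mathcal I}(\|\mu_i\|_{\TV}-\|\gamma_i\|_{\TV})+\eps(\|\mu^\otimes\|_{\TV}-\|\gamma\|_{\TV})$. This is not harmless: in the unbalanced setting $\|\pi\|_{\TV}$ is a free variable, so shifting $c_\gamma$ by a nonzero constant adds a nontrivial term proportional to $\|\pi\|_{\TV}$ and changes the argmin. Your own derivation via the chain rule and $s\log(st)=s\log s+s\log t$ would in fact produce the correct identity (with cross term) if you keep track of the linear mass corrections $-\|\alpha\|_{\TV}+\|\beta\|_{\TV}$ in the definition of $\KL$; the fix is simply to use the full identity from \cite[Prop~9]{SejViaPey21} rather than its balanced special case.
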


\begin{proof}
  For arbitrary measures $\alpha_1$, $\alpha_2$, $\alpha_3 \in \M^+(Y)$,
  the KL divergence factorizes into 
  \begin{align*}
    \KL(\alpha_1 \otimes \alpha_2, \alpha_3 \otimes \alpha_3)
    &=
    \| \alpha_1 \|_{\TV} \, \KL(\alpha_2, \alpha_3)
    + \|\alpha_2\|_{\TV} \, \KL(\alpha_1, \alpha_3)
    \\
    &\qquad + (\| \alpha_1 \|_{\TV} - \| \alpha_3 \|_{\TV}) \,
    (\| \alpha_2 \|_{\TV} - \| \alpha_3 \|_{\TV}),
  \end{align*}
  see \cite[Prop~9]{SejViaPey21}.
  Using the definition of Csiszár divergences, 
  we especially have
  \begin{equation}
  \label{eq:fac-kl}
  \begin{aligned}
    \KL(\alpha_1 \otimes \alpha_2, \alpha_3 \otimes \alpha_3)
    &=
    \| \alpha_1 \|_{\TV} \, \int_Y \log (\tfrac{\dx \alpha_2}{\dx \alpha_3}) \dx \alpha_2
    + \|\alpha_2\|_{\TV} \, \KL(\alpha_1, \alpha_3)
    \\
    &\qquad- \| \alpha_3 \|_{\TV} \,
    (\| \alpha_2 \|_{\TV} - \| \alpha_3 \|_{\TV}).
  \end{aligned}
  \end{equation}
  Assume $D_\phi(\alpha_2, \alpha_3) < \infty$,
  we can factorize the balanced and unconstrained divergences into
  \begin{align}
    \label{eq:fac-bal}
      \iota_{\{\alpha_3 \otimes \alpha_3\}}(\alpha_1 \otimes \alpha_2)
      &= \| \alpha_1 \|_{\TV} \, \iota_{\{\alpha_3\}} (\alpha_2)
      + \| \alpha_2 \|_{\TV} \, \iota_{\{\alpha_3\}} (\alpha_1)
      =  \| \alpha_2 \|_{\TV} \, \iota_{\{\alpha_3\}} (\alpha_1)
      \\
      \label{eq:fac-free}
      0(\alpha_1 \otimes \alpha_2, \alpha_3 \otimes \alpha_3)
      &= \| \alpha_1 \|_{\TV} \, 0(\alpha_2, \alpha_3)
      + \| \alpha_2 \|_{\TV} \, 0(\alpha_1, \alpha_3)
      = \| \alpha_2 \|_{\TV} \, 0(\alpha_1, \alpha_3).
  \end{align}
  Applying \cref{eq:fac-kl,eq:fac-bal,eq:fac-free},
  and exploiting  $D_{\phi_i}(\gamma_i, \mu_i) < \infty$
  and $\KL(\gamma, \mu) < \infty$,
  we have
  \begin{align*}
    F_\eps^{\bi}(\pi,\gamma)
    &=\int_{\vX^2} c(\vx,\vx')  \dx \pi(\vx) \dx \gamma(\vx') 
    + \sum_{i \not\in \mathcal I} \|\gamma_i\|_{\TV} \, D_{\phi_i}(\pi_i, \mu_i)
    \\
    &\;+
      \sum_{i\in\mathcal I} \Bigl(
      \int_{\vX} 
      \int_{X_i} \log \bigl(\tfrac{\dx \gamma_i}{\dx \mu_i} \bigr) \dx \gamma_i
      \dx \pi
      + \|\gamma_i\|_{\TV} \, \KL(\pi_i,\mu_i)
      + \underbrace{
      \|\mu_i\|_{\TV}^2 - \|\gamma_i\|_{\TV} \, \|\mu_i \|_{\TV}
      }_{\text{constant}}
      \Bigr)
    \\
    &\;+
      \eps \, \Bigl(
      \int_{\vX} 
      \int_{\vX} \log \bigl(\tfrac{\dx \gamma}{\dx \mu^\otimes} \bigr)
      \dx \gamma
       \dx \pi
      + \|\gamma\|_{\TV} \, \KL(\pi,\mu^\otimes)
      + \underbrace{
      \|\mu^\otimes\|_{\TV}^2 - \|\gamma \|_{\TV} \, \|\mu^\otimes\|_{\TV}
      }_{\text{constant}} \Bigr).
  \end{align*}
  Rearranging terms and omitting the constant terms yields the assertion.
\end{proof}

The minimization of $F^{\bi}_{\eps}(\cdot,\gamma)$ over $\M^+(\vX)$ may be efficiently solved
using the multi-marginal Sinkhorn scheme in \cite{BLNS2021}
if the cost function has a sparse structure.
For example,
we can use cost functions of the form \cref{eq:cost}
with $N-1$ non-zero coefficients $c_{ij}$
such that \cref{eq:cost} decouples according to a tree, that is, there is no sequence $i_1,\dotsc,i_k$ with $c_{i_1,i_2},\dotsc,c_{i_{k-1},i_k} > 0$ and $i_1 = i_k$.
Similarly to \cite[Alg~1]{SejViaPey21}, 
the bi-convex relaxation leads to the alternating minimization scheme in \cref{alg:UMGW}. 
Since $F_\eps^{\bi}$ is invariant
under $(\pi, \gamma) \mapsto (t \, \pi, \frac1t \, \gamma)$ for $t > 0$,
the measures may be balanced such that $\|\pi\|_{\TV} = \|\gamma\|_{\TV}$.
In the algorithm, 
the rescaling of the newly computed variable corresponds this balancing. 
The algorithm produces a sequence $\pi_1,\gamma_1,\pi_2,\gamma_2,\dots$ which monotonously decreases the lower-bounded objective $F^{\bi}_{\eps}$. Hence, the sequence $(F^{\bi}_{\eps}(\pi_k,\gamma_k))_{k \in \N}$ converges. However, in general it may happen that $(\pi_k,\gamma_k)_{k \in \N}$ do not converge.  Even if the iterates converge,
i.e.\ $\pi_k \to \pi$, $\gamma_k \to \gamma$, the limits may not coincide. However, in our numerical experiments,
we usually observe convergence to a single limit $\pi = \gamma$.

\begin{algorithm}
	\begin{algorithmic}
	    \State \textbf{Input:} 
	    \parbox[t]{300pt}{mm-spaces $\XX_i \coloneqq (X_i,d_i,\mu_i)$, $i=1,\dots,N$;\\
        cost function $c$;
	    \\
	    entropy functions $\varphi_i$, $i=1,\dots,N$, from \cref{tab:entro}; 
	    \\
	    regularization parameter $\eps > 0$.}
	    \State \textbf{Initialize} 
	    $\pi \coloneqq \gamma \coloneqq \bigotimes_{i=1}^N\mu_i$.
	    \While{not converged}
	        \State Compute $c_\gamma$ in \cref{eq:ot-cost}.
	        \State Update $\pi$ using the Sinkhorn scheme in \cite{BLNS2021} with $c_\gamma$. 
	        \State Rescale by $\pi \gets \sqrt{\|\gamma\|_{\TV} / \|\pi\|_{\TV}} \, \pi$.
	        \State Compute $c_\pi$ in \cref{eq:ot-cost}.
	        \State Update $\gamma$ using the Sinkhorn scheme in \cite{BLNS2021} with $c_\pi$. 
	        \State Rescale by $\gamma \gets \sqrt{\|\pi\|_{\TV} / \|\gamma\|_{\TV}} \, \gamma$.
	    \EndWhile
     	\State \textbf{Output:} $(\gamma,\pi)$
	\end{algorithmic}
	\caption{(Unbalanced) Multi-marginal Gromov--Wasserstein Transport}
	\label{alg:UMGW}
\end{algorithm}

\section{Barycenters} \label{sec:bary}
Barycenters and multi-marginal optimal transport problems are closely related \cite{ABM16brute,BLNS2021,CE10,tree21HRCK}.
For the GW setting,
we obtain similar results.
For $\rho_i \ge 0$ with $\sum_{i=1}^N \rho_i = 1$,
a \emph{Gromov--Wasserstein barycenter}
between the mm-spaces $\XX_i \coloneqq (X_i, d_i, \mu_i)$,
$i = 1, \dots, N$,
is a minimizing mm-space $\YY \coloneqq (Y,d,\nu)$ of
\begin{equation}
  \label{eq:gw_bary}
  \inf_{\YY} \sum_{i=1}^N \rho_i \GW^2(\XX_i, \YY).
\end{equation}
Indeed the next result shows that solutions always exist and characterizes them via multi-marginal solutions.

\begin{theorem}[Free-Support Barycenter]   \label{thm:gen-bary}
  Let $\XX_i$ be given mm-spaces,
  and let $\rho_i \ge 0$ be weights with $\sum_{i=1}^N \rho_i = 1$. Then the infimum in \cref{eq:gw_bary} is attained. 
  Moreover,
  each solution of \cref{eq:gw_bary} is isomorphic to an mm-space of the form $\YY^* \coloneqq (\vX, d^*, \pi^*)$, where
  $d^*(\vx,\vx') \coloneqq \sum_{i=1}^N \rho_i d_i(x_i,x_i')$,
  and $\pi^*$ is a minimizer of $\MGW(\XX_1, \dots, \XX_N)$
  with cost function
  \begin{equation*}
    c(\vx,\vx')
    \coloneqq
    \frac12
    \sum_{i,j=1}^N
    \rho_i \rho_j \,
    \lvert d_i(x_i,x_i') - d_j(x_j,x_j') \rvert^2.
  \end{equation*}
\end{theorem}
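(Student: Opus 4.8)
The plan is to sandwich the barycenter functional in \cref{eq:gw_bary} between two copies of $\MGW(\XX_1,\dotsc,\XX_N)$, the bridge being the elementary fact that a weighted Fréchet mean of real numbers equals their weighted average. For fixed $\vx,\vx'\in\vX$ put $a_i\coloneqq d_i(x_i,x_i')$. Then $t\mapsto\sum_{i=1}^N\rho_i(a_i-t)^2$ is minimized at $t^\ast=\sum_{i=1}^N\rho_i a_i=d^\ast(\vx,\vx')$, with minimal value $\sum_i\rho_i a_i^2-(t^\ast)^2=\tfrac12\sum_{i,j=1}^N\rho_i\rho_j(a_i-a_j)^2=c(\vx,\vx')$; consequently
\begin{equation*}
  \sum_{i=1}^N\rho_i\,\lvert d_i(x_i,x_i')-s\rvert^2\;\ge\;c(\vx,\vx')
  \qquad\text{for all }s\in\R,
\end{equation*}
with equality exactly when $s=d^\ast(\vx,\vx')$. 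I would also observe that $\MGW(\XX_1,\dotsc,\XX_N)$ coincides with $\UMGW_0$ for the balanced divergences $\phi_i=\phi_{\bal}$ and the continuous (hence lower semi-continuous) cost $c$; since $(\phi_{\bal})'_\infty=\infty$, \cref{prop:existence} provides a minimizer $\pi^\ast$ of $\MGW(\XX_1,\dotsc,\XX_N)$.

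For the lower bound, I would fix an arbitrary mm-space $\YY=(Y,d,\nu)$ and pick optimal GW plans $\pi_i\in\Pi(\mu_i,\nu)$ for $\GW(\XX_i,\YY)$, which exist by \cite[Cor~10.1]{memoli2011gromov}. Disintegrating each $\pi_i$ with respect to its common $Y$-marginal $\nu$ and gluing the kernels over $Y$ yields $\gamma\in\p(\vX\times Y)$ whose $(X_i\times Y)$-marginal equals $\pi_i$ for every $i$; in particular $\pi\coloneqq(P_{\vX})_\#\gamma\in\Pi(\mu_1,\dotsc,\mu_N)$. Since each integrand below depends only on $(x_i,y,x_i',y')$ and $\pi_i$ is optimal, and applying the pointwise bound with $s=d(y,y')$,
\begin{align*}
  \sum_{i=1}^N\rho_i\GW^2(\XX_i,\YY)
  &=\int_{(\vX\times Y)^2}\sum_{i=1}^N\rho_i\,\lvert d_i(x_i,x_i')-d(y,y')\rvert^2\dx\gamma\dx\gamma
  \\
  &\ge\int_{(\vX\times Y)^2}c(\vx,\vx')\dx\gamma\dx\gamma
  =\int_{\vX^2}c\dx\pi\dx\pi
  \;\ge\;\MGW(\XX_1,\dotsc,\XX_N),
\end{align*}
so $\inf_{\YY}\sum_i\rho_i\GW^2(\XX_i,\YY)\ge\MGW(\XX_1,\dotsc,\XX_N)$.

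For attainment and the characterization, set $\YY^\ast\coloneqq(\vX,d^\ast,\pi^\ast)$, replacing $\vX$ by its metric quotient carrying the pushed-forward measure if $d^\ast$ is only a pseudometric (this happens precisely when some $\rho_i=0$ and changes neither side). Testing $\GW(\XX_i,\YY^\ast)$ with $(P_i,\mathrm{id}_{\vX})_\#\pi^\ast\in\Pi(\mu_i,\pi^\ast)$ and using the equality case $s=d^\ast(\vx,\vx')$ of the pointwise bound gives
\begin{equation*}
  \sum_{i=1}^N\rho_i\GW^2(\XX_i,\YY^\ast)
  \le\int_{\vX^2}\sum_{i=1}^N\rho_i\,\lvert d_i(x_i,x_i')-d^\ast(\vx,\vx')\rvert^2\dx\pi^\ast\dx\pi^\ast
  =\int_{\vX^2}c\dx\pi^\ast\dx\pi^\ast=\MGW(\XX_1,\dotsc,\XX_N),
\end{equation*}
so $\YY^\ast$ attains the infimum, which therefore equals $\MGW(\XX_1,\dotsc,\XX_N)$. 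If $\YY=(Y,d,\nu)$ is any minimizer, equality must hold throughout the lower-bound chain: nonnegativity of the integrand forces $\sum_i\rho_i\lvert d_i(x_i,x_i')-d(y,y')\rvert^2=c(\vx,\vx')$, hence $d(y,y')=d^\ast(\vx,\vx')$, for $\gamma\otimes\gamma$-a.e.\ $((\vx,y),(\vx',y'))$, while equality in the second inequality shows $\pi\coloneqq(P_{\vX})_\#\gamma$ minimizes $\MGW(\XX_1,\dotsc,\XX_N)$. As $\gamma$ couples $\nu$ and $\pi$, the almost-everywhere identity yields $\GW^2(\YY,(\vX,d^\ast,\pi))\le\int|d(y,y')-d^\ast(\vx,\vx')|^2\dx\gamma\dx\gamma=0$, so $\YY$ is isomorphic to the mm-space $(\vX,d^\ast,\pi)$ by \cite[Thm~5.1]{memoli2011gromov}, which is of the asserted form.

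The only genuinely technical point is the gluing step, i.e.\ producing $\gamma$ with the prescribed \emph{bivariate} marginals $\pi_i$ out of their disintegrations over the shared marginal $\nu$ — legitimate because compact metric spaces are Polish — together with the harmless degeneration of $d^\ast$ to a pseudometric when a weight vanishes. The conceptual heart is the pointwise Fréchet-mean identity, which simultaneously identifies $d^\ast$ as the optimal barycenter metric and $c$ as the residual cost.
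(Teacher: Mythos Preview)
Your proof is correct and follows essentially the same route as the paper: the pointwise Fr\'echet-mean identity rewriting $c$ in terms of $d^*$, the gluing of optimal GW plans over the shared $\nu$-marginal to get the lower bound, the test plans $(P_i,\mathrm{id})_\#\pi^*$ for the upper bound, and the equality analysis to force $d(y,y')=d^*(\vx,\vx')$ almost everywhere and conclude $\GW(\YY,(\vX,d^*,\pi))=0$. You are in fact slightly more careful than the paper on two points it leaves implicit: you justify existence of the MGW minimizer $\pi^*$ via \cref{prop:existence}, and you address the possibility that $d^*$ is only a pseudometric when some $\rho_i=0$.
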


\begin{proof}
  First we note that
  the cost function may be rearranged as
  \vspace{-15pt}
  \begin{equation*}
    c(\vx,\vx')
    =
    \sum_{i=1}^N \rho_i \, d_i^2(x_i,x_i')
    - \sum_{i,j=1}^N \rho_i \rho_j \, d_i(x_i,x_i') \, d_j(x_j,x_j')
    =
    \sum_{i=1}^N
    \rho_i \, \Bigl| 
    d_i(x_i,x_i') 
    - \overbrace{
      \sum_{j=1}^N \rho_j \, d_j(x_j,x_j')
    }^{= d^*(x,x')}
    \Bigr|^2.
  \end{equation*}
  Let $\YY = (Y,d,\nu)$ be an arbitrary mm-space,
  and let $\pi^{(i)}_{\GW}$ be an optimal plan of $\GW(\XX_i,\YY)$, $i = 1,\dotsc,N$.
  Since all these plans have the marginal
  \raisebox{0pt}[0pt][0pt]{$(P_2)_\# \pi^{(i)}_{\GW} = \nu$},
  Dudley's lemma \cite[Lem~8.4]{ABS21} ensures
  the existence of a gluing
  $\pi_{\mathrm{g}} \in \Pi(\mu_1,\dotsc,\mu_N,\nu)$
  with \raisebox{0pt}[0pt][0pt]{$(P_{X_i \times Y})_\# \pi_{\mathrm{g}} = \pi^{(i)}_{\GW}$}.
  Exploiting that
  the 2-marginals of $\pi_{\mathrm{g}}$ are optimal Gromov--Wasserstein plans,
  and that
  the mean $d^*(\vx,\vx')$ is the pointwise minimizer of
  \raisebox{0pt}[0pt][0pt]{$\min_{t \in \R} \sum_{i=1}^N \rho_i \lvert d_i(x_i,x_i') - t \rvert^2$}
  for fixed $\vx$ and $\vx'$, we obtain
  \begin{equation}
    \begin{aligned}
      &\sum_{i=1}^N \rho_i \GW^2(\XX_i,\YY)
      =
      \int_{(\vX \times Y)^2}
      \sum_{i=1}^N \rho_i \,
      \lvert d_i(x_i,x_i') - d(y,y') \rvert^2
      \dx \pi_{\mathrm{g}}(\vx,y)\dx \pi_{\mathrm{g}}(\vx',y')
      \\
      &\geq
      \int_{(\vX \times Y)^2}
      \underbrace{
        \sum_{i=1}^N \rho_i
        \lvert d_i(x_i,x_i') - d^*(\vx,\vx') \rvert^2
      }_{=c(\vx,\vx')}
      \dx \pi_{\mathrm{g}}(\vx,y)\dx \pi_{\mathrm{g}}(\vx',y')
      \ge \MGW(\XX_1, \dots, \XX_N),
    \end{aligned}
    \label{eq:gw-mgw}
  \end{equation}
  where we use $(P_{\vX})_\#\pi_{\mathrm{g}} \in \Pi(\mu_1, \dots, \mu_N)$ in the last step.
  Using the substitutions $\pi^{(i)}_* \coloneqq ({P_{X_i}},\id)_\# \pi^*$
  for any optimal plan $\pi^*$ of $\MGW(\XX_1,\dotsc,\XX_N)$,
  we further have
  \begin{equation}
  \label{eq:mgw-gw}
  \begin{aligned}
    &\MGW(\XX_1,\dotsc,\XX_N)
      =
      \sum_{i=1}^N \rho_i
      \int_{\vX^2}
      \lvert d_i(x_i,x_i') - d^*(x,x') \rvert^2
      \dx \pi^*(\vx) \dx \pi^*(\vx')\\
    &=
      \sum_{i=1}^N \rho_i
      \int_{(X_i \times \vX)^2}
      \lvert d_i(x_i,x_i') - d^*(\vy,\vy') \rvert^2
      \dx \pi^{(i)}_*(x_i,\vy)\dx \pi^{(i)}_*(x_i',\vy')
      \geq
      \sum_{i=1}^N \rho_i \GW^2(\XX_i, \YY^*).
  \end{aligned}
  \end{equation}
  Due to \cref{eq:gw-mgw},
  the last inequality has to be an equality
  showing that
  $\YY^* = (\vX, d^*, \pi^*)$
  from the assertion
  minimizes \cref{eq:gw_bary} and is
  in fact a barycenter between $\XX_1, \dots, \XX_N$.

  The other way round,
  \cref{eq:mgw-gw} implies that
  \cref{eq:gw-mgw} becomes an equality 
  for every further minimizer $\YY^\dagger = (Y^\dagger, d^\dagger, \nu^\dagger)$ of \cref{eq:gw_bary}.
  Analogously to above,
  for every constructed gluing 
  \smash{$\pi^\dagger_{\mathrm g} \in \Pi(\mu_1, \dots, \mu_N, \nu^\dagger)$},
  the plan
  $(P_{\vX})_\# \pi_{\mathrm{g}}^\dagger$
  is a solution of $\MGW(\XX_1,\dotsc,\XX_N)$.
  We will show that
  $\YY^\dagger$ is isomorphic to $\YY^* \coloneqq (\vX, d^*, \pi^*)$
  with \smash{$\pi^* \coloneqq (P_{\vX})_\# \pi_{\mathrm{g}}^\dagger$}.
  Exploiting the equality in \cref{eq:gw-mgw}, that
  the mean $d^*(\vx,\vx')$ is the pointwise minimizer of
  \smash{$\min_{t \in \R} \sum_{i=1}^N \rho_i \lvert d_i(x_i,x_i') - t \rvert^2$},
  and that the integrands are non-negative,
  we firstly conclude
  \begin{equation*}
      \sum_{i=1}^N \rho_i \,
      \lvert d_i(x_i,x_i') - d^\dagger(y,y') \rvert^2
      =
      \sum_{i=1}^N \rho_i
      \lvert d_i(x_i,x_i') - d^*(\vx,\vx') \rvert^2
      \quad
      \text{for }(\pi_{\mathrm{g}}^\dagger \otimes \pi_{\mathrm{g}}^\dagger)\text{-a.e.\ } (\vx,y,\vx',y')
  \end{equation*}
  and secondly $d^\dagger(y,y') = d^*(\vx,\vx')$ for $\pi_{\mathrm{g}}^\dagger \otimes \pi_{\mathrm{g}}^\dagger$-a.e.\ $(\vx,y,\vx',y')$.
  Since $\pi_{\mathrm{g}} \in \Pi(\pi^*,\nu^\dagger)$, we obtain
  \begin{equation*}
  \GW^2(\YY^*,\YY^\dagger) \leq \int_{\vX \times Y^\dagger} 
  \lvert 
  \underbrace{d^*(\vx,\vx') - d^\dagger(y,y')}_{\equiv 0 \; \text{a.e.}}
  \rvert^2
  \dx \pi_{\mathrm{g}}(\vx,y) \pi_{\mathrm{g}}(\vx',y') = 0,
  \end{equation*}
  which finally shows that $\YY^*$ and $\YY^\dagger$ are isomorphic.
\end{proof}

In the special case $N=2$,
the barycenters
from \cref{thm:gen-bary}
have the form
$(X_1 \times X_2,(1-\rho) \, d_1 + \rho \, d_2, \pi^*)$,
where $\pi^*$ is an optimal GW plan.
Each optimal plan $\pi^*$ thus corresponds to a geodesic in the GW space
\cite[Thm~3.1]{sturm2020space}.
Although \cref{thm:gen-bary} allows us
to determine barycenters between arbitrary spaces,
due to the generality of the mm-space $\YY$,
these barycenters are difficult to interpret
since $\YY$ can have a completely different structure than $\XX_i$.
However, for GW barycenters with respect to images for instance,
it is desirable to obtain again an image.
In this situation,
it therefore makes sense
to fix $(Y,d)$ in $\YY$
and to minimize only over the measure $\nu$.
Moreover, the GW barycenter may be relaxed
by considering unbalanced transport.
Against this background,
we consider the
\emph{fixed-support (unbalanced) Gromov--Wasserstein barycenter}
given by 
\begin{equation}
  \label{eq:rest-bary}
  \argmin_{\nu \in \M^+(Y)}
  \sum_{i=1}^N \rho_i \UGW^2(\XX_i,\YY)
  \qquad\text{with}\qquad
  \YY = (Y, d, \nu),
\end{equation}
where the $\UGW$ terms may be unbalanced in $\XX_i$
with respect to some entropy function $\phi_i$
and are balanced with respect to $\YY$. 
In the following, we consider unbalanced multi-marginal transports where one input is unconstrained so that the corresponding marginal is completely free. To emphasize the independence of the input measure, we introduce the notation $\bullet$, which acts as a dummy measure on an associated compact metric space $(Y,d)$ so that $(Y,d,\bullet)$ becomes an mm-space.

\begin{theorem}[Fixed-Support Barycenter]
  \label{thm:rest-bary}
  Let $\XX_i$ be given mm-spaces,
  $(Y,d)$ be a given metric space,
  and $\rho_i \ge 0$ be weights
  with $\sum_{i=1}^N \rho_i = 1$. Then the infimum in \cref{eq:rest-bary} is attained by
  $\YY^* \coloneqq (Y, d, \nu^*)$,
  with $\nu^* \coloneqq (P_Y)_\# \pi^*$,
  where $\pi^*$ minimizes
  $\UMGW(\XX_1,\dots,\XX_N,(Y,d,\bullet))$
  with cost function
  \begin{equation}\label{eq:star_cost}
    c((\vx,y),(\vx',y'))
    \coloneqq
    \sum_{i=1}^N \rho_i \,
    | d_i(x_i,x_i') - d(y,y')|^2.
  \end{equation}
  The unregularized UMGW ($\eps = 0$) is  unbalanced in $\XX_i$
  with respect to $\rho_i\phi_i$
  and unrestricted in $(Y,d,\bullet)$. Furthermore, for any arbitrary minimizer $\YY^\dagger = (Y,d,\nu^\dagger)$, there exists a multi-marginal solution $\pi^*$ of $\UMGW(\XX_1,\dotsc,\XX_N,(Y,d,\bullet))$ so that $(P_{Y})_\# \pi^* = \nu^\dagger$.
\end{theorem}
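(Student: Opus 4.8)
The plan is to mirror the structure of the proof of \cref{thm:gen-bary}, exploiting that the barycenter objective \cref{eq:rest-bary} decouples over $i$ once we glue all the bi-marginal unbalanced GW plans together, and that the cost \cref{eq:star_cost} is exactly the pointwise-minimal-over-$t$ quadratic with the roles of the $X_i$ and $Y$ distances played against each other. First I would fix an arbitrary admissible $\nu \in \M^+(Y)$, set $\YY = (Y,d,\nu)$, and for each $i$ take an optimal plan $\pi^{(i)}$ of $\UGW(\XX_i,\YY)$ (existence by \cref{prop:existence}, applied to the bi-marginal case, since the $\rho_i\phi_i$-divergence is balanced on the $\YY$-side). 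The key structural point is that all these plans share the $Y$-marginal $\nu$ on their second factor; however, unlike in \cref{thm:gen-bary} the $X_i$-marginals of $\pi^{(i)}$ are \emph{not} the given $\mu_i$ but some $\tilde\mu_i \le$-comparable to $\mu_i$ in the sense controlled by the divergence. I would invoke Dudley's gluing lemma \cite[Lem~8.4]{ABS21} to build $\pi_{\mathrm g} \in \M^+(\vX \times Y)$ with $(P_{X_i\times Y})_\#\pi_{\mathrm g} = \pi^{(i)}$ for all $i$ and $(P_Y)_\#\pi_{\mathrm g} = \nu$; then $\pi \coloneqq (P_{\vX})_\#\pi_{\mathrm g}$ is an admissible competitor for $\UMGW(\XX_1,\dots,\XX_N,(Y,d,\bullet))$ — admissible because the $(Y,d,\bullet)$-slot is unrestricted, so no marginal constraint is violated there.

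Next I would run the chain of inequalities: starting from $\sum_i \rho_i \UGW^2(\XX_i,\YY)$, write each summand as the double integral of $|d_i(x_i,x_i') - d(y,y')|^2$ against $\pi^{(i)}\otimes\pi^{(i)}$ plus the divergence penalties $D^\otimes_{\rho_i\phi_i}(\pi^{(i)}_1,\mu_i)$ (the $\YY$-side penalty vanishes by balancedness, which is where I must be careful that the weighting by $\rho_i$ is absorbed exactly into the entropy function as stated). Rewriting everything against $\pi_{\mathrm g}\otimes\pi_{\mathrm g}$ and using that $d(y,y')$ is a \emph{feasible} choice of the free variable $t$ in $\min_t \sum_i \rho_i|d_i(x_i,x_i')-t|^2$, I get $\sum_i\rho_i\UGW^2(\XX_i,\YY) \ge \int c\,\dx\pi_{\mathrm g}\dx\pi_{\mathrm g} + \sum_i D^\otimes_{\rho_i\phi_i}(\pi_i,\mu_i) \ge F_0(\pi) \ge \UMGW(\XX_1,\dots,\XX_N,(Y,d,\bullet))$, where the penalty terms match up because the $\vX$-marginals of $\pi_{\mathrm g}$ are the $X_i$-marginals of the $\pi^{(i)}$. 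For the reverse direction, given an optimal $\pi^*$ of the $\UMGW$ problem, I would push forward via $\pi^{(i)}_* \coloneqq (P_{X_i},\id)_\#\pi^*$, observe that the $Y$-component of $\pi^*$ is $\nu^* = (P_Y)_\#\pi^*$, and read off $\UMGW(\dots) = \sum_i\rho_i\int|d_i - d^*|^2\,\dx\pi^*\dx\pi^* + (\text{penalties}) \ge \sum_i\rho_i\UGW^2(\XX_i,\YY^*)$, where now $d^*$ really is evaluated on the $Y$-slot because the cost \cref{eq:star_cost} was set up that way — the point being that in the $\UMGW$ problem the last coordinate already \emph{is} a point of $Y$, so no inequality is lost here and $d(y,y')$ equals the optimal $t$ only when the geometry cooperates; actually, since $c$ in \cref{eq:star_cost} is literally $\sum_i\rho_i|d_i - d(y,y')|^2$ and not the min-over-$t$ version, this direction is an exact rewriting and I get equality of the two optimal values. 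Combining, $\nu^*$ attains \cref{eq:rest-bary}.

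For the final ``moreover'' statement — that any minimizer $\YY^\dagger = (Y,d,\nu^\dagger)$ arises as $(P_Y)_\#\pi^*$ for some $\UMGW$-optimal $\pi^*$ — I would rerun the gluing construction on $\YY^\dagger$ itself: take optimal $\pi^{(i)}$ of $\UGW(\XX_i,\YY^\dagger)$, glue to $\pi_{\mathrm g}^\dagger$ with $(P_Y)_\#\pi_{\mathrm g}^\dagger = \nu^\dagger$, and set $\pi^* \coloneqq (P_{\vX\times Y})_\#\pi_{\mathrm g}^\dagger$ (keeping the $Y$-coordinate this time). Since $\YY^\dagger$ is a barycenter, the inequality chain above is saturated, forcing $\pi^*$ to be $\UMGW$-optimal and $(P_Y)_\#\pi^* = \nu^\dagger$ by construction. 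The main obstacle I anticipate is bookkeeping the unbalanced penalty terms through the gluing: I must verify that $D^\otimes_{\rho_i\phi_i}$ applied to the $X_i$-marginal of the glued measure genuinely equals the penalty appearing in the original bi-marginal $\UGW(\XX_i,\YY)$, i.e.\ that gluing does not alter the one-dimensional marginals — this is automatic from $(P_{X_i\times Y})_\#\pi_{\mathrm g} = \pi^{(i)}$, but the $\otimes$-structure of the divergence (it penalizes $\pi_i\otimes\pi_i$ against $\mu_i\otimes\mu_i$) means I should double-check the factorization identity \cref{eq:fac-bal}-type manipulation goes through with the $\rho_i$ weight, and that the $\eps = 0$ hypothesis is what lets me drop the KL term cleanly. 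A secondary subtlety is ensuring the $\UGW$-optimal plans exist in the first place, which needs the $\YY$-side entropy to be $\phi_{\bal}$ (hence $(\phi)'_\infty = \infty > 0$) so \cref{prop:existence} applies.
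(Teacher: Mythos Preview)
Your approach matches the paper's almost exactly: glue the optimal bi-marginal $\UGW$ plans along the shared $Y$-marginal, use the gluing as a competitor for $\UMGW$, then reverse the inequality by projecting an optimal $\UMGW$ plan back to bi-marginals; finally rerun the gluing on any barycenter $\YY^\dagger$ to saturate the chain. Two genuine slips, however, need correcting.

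First, your competitor for $\UMGW(\XX_1,\dots,\XX_N,(Y,d,\bullet))$ must be $\pi_{\mathrm g}$ itself, \emph{not} $\pi \coloneqq (P_{\vX})_\#\pi_{\mathrm g}$. The multi-marginal problem here has $N+1$ slots and the cost \cref{eq:star_cost} depends on $d(y,y')$, so the plan lives on $\vX\times Y$; projecting away the $Y$-coordinate makes both the cost integral and $F_0(\pi)$ meaningless. You half-notice this in the ``moreover'' part (``keeping the $Y$-coordinate this time''), but the forward chain must also keep it.

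Second, the min-over-$t$ step you invoke (``$d(y,y')$ is a feasible choice of the free variable $t$'') is left over from the free-support argument of \cref{thm:gen-bary} and has no role here. In the fixed-support case the passage
\[
  \sum_{i=1}^N \rho_i \UGW^2(\XX_i,\YY)
  = \int_{(\vX\times Y)^2} c\,\dx\pi_{\mathrm g}\dx\pi_{\mathrm g}
  + \sum_{i=1}^N D^\otimes_{\rho_i\phi_i}\bigl((P_{X_i})_\#\pi_{\mathrm g},\mu_i\bigr)
\]
is an \emph{equality}: you are just rewriting the sum of bi-marginal integrals against the glued measure, with the integrand already equal to $c$ from \cref{eq:star_cost}. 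The only inequality in the forward direction is the final ``$\ge \UMGW$'' coming from feasibility of $\pi_{\mathrm g}$. Your divergence bookkeeping worry is unfounded for the same reason: $(P_{X_i})_\#\pi_{\mathrm g} = (P_1)_\#\pi^{(i)}$ immediately from $(P_{X_i\times Y})_\#\pi_{\mathrm g} = \pi^{(i)}$, so the penalty terms transfer verbatim with no factorization identity needed.
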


\begin{proof}
  The assertion may be established similarly to \cref{thm:gen-bary}.
  First, let $\nu$ be an arbitrary measure on $(Y,d)$, and set $\YY \coloneqq (Y,d,\nu)$.
  Let \smash{$\pi^{(i)}_{\UGW}$} be an optimal plan of $\UGW(\XX_i,\YY)$, $i=1,\ldots,N$.
  Since the $\UGW$ problems are balanced in $\YY$,
  all these plans have the marginal
  \raisebox{0pt}[0pt][0pt]{$(P_2)_\# \pi^{(i)}_{\UGW} = \nu$}.
  Again,
  Dudley's lemma \cite[Lem~8.4]{ABS21} ensures
  the existence of a gluing
  $\pi_{\mathrm{g}} \in \M^+(\vX \times Y)$
  with \raisebox{0pt}[0pt][0pt]{$(P_{X_i \times Y})_\# \pi_{\mathrm{g}} = \pi^{(i)}_{\UGW}$}.
  Based on this gluing,
  we obtain
  \begin{equation}
    \label{eq:ugwb-umgw}
    \begin{aligned}
      &\sum_{i=1}^N \rho_i \UGW^2(\XX_i, \YY)
      =
      \sum_{i=1}^N \rho_i \,
      \Bigl(
      \int_{(\vX \times Y)^2}
      |d_i - d|^2
      \dx \pi_{\mathrm{g}} \dx \pi_{\mathrm{g}}
      + D_{\phi_i}^\otimes((P_{X_i})_\# \pi_{\mathrm{g}}, \mu_i)
      \Bigr)
      \\
      &=
      \int_{(\vX \times Y)^2}
      \!\underbrace{
      \sum_{i=1}^N \rho_i \,
      |d_i - d|^2
      }_{=c}
      \dx \pi_{\mathrm{g}} \dx \pi_{\mathrm{g}}\!
      + \!\sum_{i=1}^N \rho_i \,
      D_{\phi_i}^\otimes((P_{X_i})_\# \pi_{\mathrm{g}},\mu_i)
      \ge
      \UMGW(\XX_1,\dots,\XX_N,(Y,d,\bullet)),
    \end{aligned}
  \end{equation}
  where $\UMGW$ is unrestricted in $(Y,d,\bullet)$.
  Considering the marginals
  $\pi^{(i)}_* \coloneqq (P_{X_i \times Y})_\# \pi^*$
  of any optimal $\UMGW$ plan $\pi^*$,
  whose second marginals coincide,
  we further have
  \begin{align*}
    \UMGW(\XX_1,\dots,\XX_N,(Y,d,\bullet))
    &=
      \sum_{i=1}^N \rho_i \,
      \Bigl(
      \int_{(X_i \times Y)^2}
      |d_i - d|^2
      \dx \pi^{(i)}_* \dx \pi^{(i)}_*
      + D_{\phi_i}^\otimes((P_1)_\# \pi^{(i)}_*,\mu_i)
      \Bigr)
    \\
    &\ge 
      \sum_{i=1} \rho_i \UGW^2(\XX_i,\YY^*).
  \end{align*}
  The last inequality has to be an equality because of \cref{eq:ugwb-umgw},
  so $\YY^*$ with $\nu^* = (P_Y)_\# \pi^*$ is a fixed-support barycenter.
  Finally, for some arbitrary barycenter $\YY^\dagger = (Y,d,\nu^\dagger)$, we can construct a gluing $\pi_{\mathrm{g}}^\dagger \in \M^+(\vX \times Y)$ with $(P_{Y})_\# \pi_{\mathrm{g}}^\dagger = \nu^\dagger$ as above. 
  Repeating the previous arguments for $\pi_{\mathrm{g}}^\dagger$, 
  we obtain equality in \cref{eq:ugwb-umgw}; so $\pi_{\mathrm{g}}^\dagger$ is a solution to $\UMGW(\XX_1,\dotsc,\XX_N,(Y,d,\bullet))$, which concludes the proof.
\end{proof}

\begin{remark}[Fixed-Support Barycenter Computation with \cref{alg:UMGW}]\label{rem:umgw_for_bary}
    Combining \cref{alg:UMGW} and \cref{thm:rest-bary} yields a numerically tractable procedure for the (unbalanced) barycenter computation of $N$ inputs $\XX_1,\dotsc,\XX_N$. This is achieved by running the algorithm with the $N+1$ inputs 
    $\XX_1,\dotsc,\XX_N$, and $\YY \coloneqq (Y,d,\bullet)$, where $\YY$ is an mm-space supported on some a priori fixed metric space $(Y,d)$, 
    with $c$ as defined in \cref{eq:star_cost},
    with entropy functions $\phi_1,\dotsc,\phi_N,\phi_{\free}$,
    and with small regularization $\eps > 0$.
    For an output $\pi$, the barycenter is obtained as $(Y,d,(P_Y)_\# \pi)$.
\end{remark}

\begin{remark}[Comparison with the Procedure from \cite{PCS2016}]\label{rem:peyre}
    In contrast to our approach in \cref{rem:umgw_for_bary},
    Peyré, Cuturi, and Solomon \cite{PCS2016} propose to
    compute a (regularized) GW barycenter $\YY \coloneqq(Y,d,\nu)$
    by fixing the number of points in $Y \coloneqq \{y_1, \dots, y_m\}$ 
    and the measure $\nu \in \p(Y)$ beforehand.
    Thus it remains to determine the metric $d$
    or equivalently the \emph{dissimilarity matrix}
    $D_{d} \coloneqq (d(y_i, y_j))_{i,j=1}^m$ by minimizing
    \begin{equation*}
      \argmin_{D_{d} \in \R^{m \times m}}
      \sum_{i=1}^N \rho_i \GW^2_\eps (\XX_i,\YY)
      \quad\text{with}\quad
      \YY = (Y,d,\nu),
    \end{equation*}
    where $\GW_\eps$ is the balanced version of $\UGW_\eps$ with $\phi_\bal$.
    For this purpose,
    Peyré et al. propose a block-coordinate descent, 
    which alternatively minimizes over $D_{d}$ and 
    the (regularized) $\GW_\eps$ plans $(\pi_i)_{i=1}^N$ between $\XX_i$ and $\YY$.
    Intriguingly, the minimization with respect to $D_{d}$ can be given in closed form
    and is numerically negligible.
    The minimization with respect to $(\pi_i)_{i=1}^N$ is achieved by an $N$-fold application of projected gradient descent. 
    For a suitable step-size, a single descent step may be efficiently computed 
    using the Sinkhorn algorithm,
    and a close inspection of the resulting algorithm shows that
    the descent steps essentially coincide with the update of $\pi$ and also $\gamma$ 
    from \cref{alg:UMGW} for the bi-marginal, discrete setting,
    see also \cite{SejViaPey21}. 
    Due to the tree-structured cost \cref{eq:star_cost},
    the complexity per iteration of the multi-marginal Sinkhorn is the same 
    as $N$ iterations of the bi-marginal Sinkhorn \cite{BLNS2021}. 
    Since the constant terms in \cref{eq:ot-cost} can be neglected for the balanced case,
    and due to the manner 
    how the multi-marginal plan is stored 
    as set of bi-marginal plans in \cite{BLNS2021},
    the complexity to compute $c_\gamma$ for $c$ as in \cref{eq:star_cost} for $N$ marginals is the same as an $N$-fold computation of $c_\gamma$ for two marginals.
    For the computation of balanced barycenters,
    the numerical complexity of the approach in \cref{rem:umgw_for_bary}
    and the algorithm in \cite{PCS2016} is thus comparable.
    A numerical run-time comparison is given in \cref{sec:numerics}.
\end{remark}

As described in \cref{rem:peyre},
the output of the procedure in \cite{PCS2016} is a dissimilarity matrix. 
If we want to determine a GW barycenter on a specific space,
it is common practice to embed the points $Y \coloneqq \{y_1, \dots, y_m\}$ 
into the required metric space such that the distances between them (nearly)
coincide with the computed dissimilarity matrix. 
Depending on which space one is interested in, 
this may pose some challenges.
Using our procedure circumvents this embedding-issue
because the fixed-support barycenters are directly constructed 
with respect to the required space.

\section{Multi-marginal Fused Gromov--Wasserstein Transport}\label{sec:fused}

The multi-marginal Gromov--Wasserstein transport allows 
to compare the structure information between several objects.
In many applications,
besides the structure information,
there is also label information available. 
For instance, the pixels of a colour image are labelled with colour information.
In medical applications, 
given data may additionally contain information about the corresponding tissue.
In graph theory, 
the nodes are sometimes annotated or possess additional labels. 
The major motivation behind the \emph{fused Gromov--Wasserstein distance} \cite{vayer2020fused}
is to combine the structure and label information. 
In this section,
we will generalize the fused Gromov--Wasserstein distance to the multi-marginal setting. 

A \emph{labelled mm-space} is a triple $\Xf \coloneqq (X \times A, d, e, \mu)$
consisting of compact metric spaces $(X,d)$ and $(A,e)$ 
and a Borel probability measure $\mu \in \p(X \times A)$.
The metric space $(X,d)$ contains the structure information,
and $(A,e)$ the label information. 
For the labelled mm-spaces 
$\Xf_1 \coloneqq (X_1 \times A, d_1, e, \mu_1)$ and $\Xf_2 \coloneqq (X_2 \times A, d_2, e, \mu_2)$
on $(A,e)$,
the \emph{fused Gromov--Wasserstein distance} is defined by
\begin{align*}
\FGW_\beta(\Xf_1,\Xf_2)
&\coloneqq
\smashoperator[l]{\inf_{\pi \in \Pi(\mu_1,\mu_2)}}
 \Bigl(
 \int_{((X_1 \times A) \times (X_2 \times A))^2} 
 (1-\beta) \,
 |d_1(x_1,x_1') - d_2(x_2, x_2')|^2
 \\
 &\qquad
 + \tfrac\beta2 \,
  e^2(a_1,a_2)
  + \tfrac\beta2 \,
  e^2(a_1',a_2')
  \dx \pi((x_1,a_1),(x_2,a_2))
  \dx \pi((x_1',a_1'),(x_2',a_2'))
  \Bigr)^{\frac12}
\end{align*}
with respect to the \emph{trade-off parameter} $\beta \in [0,1]$.
The fused Gromov--Wasserstein distance thus combines
the Gromov--Wasserstein distance ($\beta = 0$) on the structure spaces 
with the Wasserstein distance ($\beta = 1$) on the label space. 
Notably, it holds $\FGW_{\beta}(\Xf_1,\Xf_2) = 0$ 
if and only if
there exists 
$\mathcal{I}  
= (\mathcal{I}_1,\mathcal{I}_2)
\colon X_1 \times A \to X_2 \times A$, 
$\mathcal{I}(x,a) = (\mathcal{I}_1(x),\mathcal{I}_2(a))$ such that $\mathcal{I}_\# \mu = \nu$, $\mathcal{I}_1:(X,d_X) \to (Y,d_Y)$ is an isometry and $\mathcal{I}_2:A \to A$ is the identity \cite[Thm~3.1]{vayer2020fused}. In this case, we call the labelled mm-spaces $\Xf_1$, $\Xf_2$ \emph{isomorphic}.

Similarly to \cref{sec:gromov-wasserstein}, 
for $\eps \ge 0$,
the \emph{regularized, unbalanced FGW transport} may be defined by
\begin{align*}
\UFGW_{\beta,\eps}(\Xf_1,\Xf_2)
\coloneqq
\smashoperator[l]{\inf_{\pi \in \M^+ (X_1,X_2)}}
 &\Bigl(
 \int_{(X_1 \times A \times X_2 \times A)^2} 
 (1 - \beta) \,
 |d_1(x_1,x_1') - d_2(x_2,x_2')|^2
 \\
 &+ \tfrac\beta2 \,
  e^2(a_1,a_2)
 + \tfrac\beta2 \,
  e^2(a_1',a_2')
  \dx \pi
  \dx \pi
  \\
  &
  +\sum_{i=1}^2 D^\otimes_{\phi_i} (\pi_i, \mu_i)
  + \eps \KL^\otimes (\pi, \mu_1 \otimes \mu_2)
  \Bigr)^{\frac12}
\end{align*}
with $\pi \coloneqq (P_i)_\# \pi$, 
where $P_i((x_1,a_1),(x_2,a_2)) \coloneqq (x_i,a_i)$, $i=1,2$.

For the multi-marginal generalization,
we consider the labelled mm-spaces $\Xf_i \coloneqq (X_i \times A,d_i, e,\mu_i)$, 
$i =1,\dotsc,N$,
defined over the same label space $(A,e)$
and employ the Cartesian products
\begin{equation*}
    \vA \coloneqq A^N 
    \quad\text{and}\quad
    \vXA \coloneqq
    \bigtimes_{i=1}^N (X_i \times A).
\end{equation*}
For convenience,
we use the notation $(\vx,\va) \in \vXA$,
where $\vx \coloneqq (x_1, \dots, x_n) \in \vX$ contain the structure parts
and $\va \coloneqq (a_1, \dots, a_N) \in \vA$ the label parts,
and where $(x_i, a_i)$ correspond to the $i$th factor $(X_i \times A)$ of $\vXA$.
We introduce the \emph{(regularized, unbalanced) multi-marginal FGW transport} problem by
\begin{equation*}
\MFGW_\beta(\Xf_1,\dotsc,\Xf_N) 
\coloneqq
\smashoperator[l]{\inf_{\pi \in \Pi(\mu_1,\dotsc,\mu_N)}} 
\int_{(\vXA)^2}
\!\!\!\!\!
(1-\beta) \, c_{\struc}(\vx,\vx')
+ \tfrac\beta2 \, c_{\lab}(\va)
+ \tfrac\beta2 \, c_{\lab}(\va')
\dx \pi(\vx,\va) \dx \pi(\vx',\va')
\end{equation*}
and
\begin{align*}
\UMFGW_{\beta,\eps}(\Xf_1,\dotsc,\Xf_N) 
\coloneqq
\inf_{\pi \in \M^+(\vXA)}
&\int_{(\vXA)^2}
(1-\beta) \, c_{\struc} (\vx,\vx') 
+ \tfrac\beta2 \, c_{\lab}(\va)
+ \tfrac\beta2 \, c_{\lab}(\va')\\
&\dx \pi(\vx,\va) \dx \pi(\vx',\va')
+ \sum_{i=1}^N D_{\phi_i}^\otimes(\pi_i, \mu_i)
    + \eps \KL^\otimes(\pi,\mu^\otimes),
\end{align*}
where $c_{\struc} \colon \vX \times \vX \to [0, \infty]$
is the cost function on the structure
and $c_{\lab}\colon \vA \to [0, \infty]$
on the labels.

\begin{remark}
\label{rem:fgw-theory}
The fused transport problems MFGW and UMFGW are, 
in fact, 
special cases of MGW and UMGW 
with respect to the cost function
\begin{equation}
    \label{eq:c-fused}
    c_{\fused} ((\vx,\va), (\vx',\va')) 
    \coloneqq
    (1-\beta) \, c_{\struc}(\vx,\vx') + \tfrac\beta2 \, c_{\lab}(\va) + \tfrac\beta2 \, c_{\lab}(\va')
\end{equation}
and where the mm-spaces $\XX_1, \dots, \XX_N$ are replaced by the 
compact pseudometric measure spaces $\Xf_1, \dots, \Xf_N$.
Since the definiteness of the metric is not exploited in the proofs, 
the existence of an optimal multi-marginal transport in \cref{prop:existence},
the tightness of the bi-convex relaxation in \cref{thm:tight},
and the alternating minimization procedure in \cref{alg:UMGW,prop:relax_as_ot}
remain valid.
The function $c_{\fused} \colon (\vXA)^2 \to [0,\infty]$ is marginal conditionally negative definite
if and only if 
$c_{\struc} \colon \vX \times \vX \to [0, \infty]$ is marginal conditionally negative definite
because the sums related to $c_{\lab}$ vanish in \cref{mcd}.
\end{remark}

Naturally, the formulation gives rise to fused Gromov--Wasserstein barycenters.
To the best of our knowledge, the only existing algorithms for such problems are based on conditional gradient descent as proposed in \cite{vayer2020fused}.
In the following, we discuss the relation between (U)MFGW and (U)FGW barycenters,
which also leads to a new algorithm for their computation.
For $\rho \coloneqq (\rho_i)_{i=1}^N$ with $\rho_i \ge 0$ and $\sum_{i=1}^N \rho_i = 1$,  
we define the weighted mean function $m_\rho\colon (\R^d)^N \to \R^d$ by
\begin{equation*}
    m_\rho(\va) =
    m_\rho (a_1,\dotsc,a_N)
    \coloneqq
    \sum_{i=1}^N \rho_i \, a_i
    = \argmin_{b\in \R^d} \sum_{i=1}^N \rho_i\|a_i - b\|^2,
\end{equation*}
where $\|\cdot\|$ denotes the Euclidean norm.
Analogously to \cref{eq:gw_bary},
the \emph{fused Gromov--Wasserstein barycenter} between 
the labelled mm-spaces $\Xf_i \coloneqq (X_i \times A, d_i, e, \mu_i)$,
$i=1,\dots,N$, 
on $(A,e)$ is a minimizing labelled mm-space $\Yf \coloneqq (Y \times A, d, e, \nu)$ of
\begin{equation}\label{eq:fgw_bary}
  \inf_{\Yf} \sum_{i=1}^N \rho_i \FGW^2_\beta(\Xf_i,\Yf).
\end{equation}

\begin{theorem}[Free-Support Fused Barycenter]
\label{thm:fus-gen-bary}
  Let $\Xf_i$, $i=1,\dots, N$, be labelled mm-spaces on $(A,e)$,
  where $A \subset \R^d$ is a convex compact set 
  and $e(a,a') \coloneqq \|a - a'\|$ is the Euclidean distance.
  Let $\rho_i \ge 0$ be weights with $\sum_{i=1}^N \rho_i = 1$. Then, the infimum in \cref{eq:fgw_bary} is attained. Moreover, each solution of \cref{eq:fgw_bary} is isomorphic to a labelled mm-space of the form $\Yf^* = (\vX \times A, d^*,e,\nu^*)$ with
  \begin{equation*}
  d^*(\vx,\vx') \coloneqq \sum_{i=1}^N \rho_i d_i(x_i,x_i'),
  \quad
  \nu^* \coloneqq (P_{\vX},m_\rho)_\# \pi^*,
  \end{equation*}
  where $\pi^*$ is a minimizer of $\MFGW(\Xf_1, \dots, \Xf_N)$
  with costs
  \begin{align*}
    c_{\struc}(\vx,\vx')
    &\coloneqq
    \frac12
    \sum_{i,j=1}^N
    \rho_i \rho_j \,
    \lvert d_i(x_i,x_i') - d_j(x_j,x_j') \rvert^2,
    \\
    c_{\lab}(\va) 
    &\coloneqq
    \frac12
    \sum_{i,j=1}^N
    \rho_i \rho_j \,
    \| a_i - a_j \|^2,
  \end{align*}
  and where
  $(P_{\vX}, m_\rho) \colon \vXA \to (\vX \times A)
  : (\vx,\va) \mapsto (\vx, m_\rho(\va))$.
\end{theorem}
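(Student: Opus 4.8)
The plan is to mimic the proof of \cref{thm:gen-bary} closely, since \cref{thm:fus-gen-bary} is the fused analogue. By \cref{rem:fgw-theory}, FGW is MGW with the pseudometric-measure spaces $\Xf_i$ and cost $c_{\fused}$ from \cref{eq:c-fused}, so the infrastructure (existence, gluing, etc.) is already available. First I would record the crucial algebraic identity: for fixed structure points and labels, the quantity
\begin{equation*}
  \sum_{i=1}^N \rho_i \bigl( (1-\beta) \lvert d_i(x_i,x_i') - d(y,y') \rvert^2 + \tfrac\beta2 \lVert a_i - b \rVert^2 + \tfrac\beta2 \lVert a_i' - b' \rVert^2 \bigr)
\end{equation*}
is minimized over $t = d(y,y') \in \R$ by $d^*(\vx,\vx') = \sum_i \rho_i d_i(x_i,x_i')$ and over $b, b' \in A$ (using convexity of $A$) by $m_\rho(\va)$ and $m_\rho(\va')$ respectively. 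Plugging these pointwise minimizers in converts $\sum_i \rho_i (1-\beta)|d_i - t|^2$ into $(1-\beta) c_{\struc}$ plus a constant-in-$t$ remainder, and likewise $\sum_i \rho_i \tfrac\beta2 \|a_i - b\|^2$ into $\tfrac\beta2 c_{\lab}(\va)$ plus a remainder; the two remainders together with the structure remainder reassemble exactly into $c_{\fused}((\vx,\va),(\vx',\va'))$ with $c_{\struc}, c_{\lab}$ as in the statement, after expanding the squares as in the first display of the proof of \cref{thm:gen-bary}.

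With that identity in hand, the ``one direction'' runs as follows. Take an arbitrary $\Yf = (Y\times A, d, e, \nu)$, let $\pi^{(i)}_{\FGW}$ be optimal FGW plans for $\FGW_\beta(\Xf_i,\Yf)$, which all share second marginal $\nu$, and apply Dudley's gluing lemma \cite[Lem~8.4]{ABS21} to obtain $\pi_{\mathrm g} \in \Pi(\mu_1,\dots,\mu_N,\nu)$ whose $(X_i\times A)\times(Y\times A)$-marginal is $\pi^{(i)}_{\FGW}$. Writing out $\sum_i \rho_i \FGW^2_\beta(\Xf_i,\Yf)$ as a double integral against $\pi_{\mathrm g} \otimes \pi_{\mathrm g}$, bounding the integrand below by its pointwise $t,b,b'$-minimum $= c_{\fused}$ (with the target $c_{\struc}, c_{\lab}$), and pushing forward via $(P_{\vX}, m_\rho)$ on the first factor and discarding the $Y$-component, I get $\sum_i \rho_i \FGW^2_\beta(\Xf_i,\Yf) \ge \MFGW_\beta(\Xf_1,\dots,\Xf_N)$ — here one must check that $(P_{\vX}, m_\rho)_\# \pi_{\mathrm g} \in \Pi(\mu_1,\dots,\mu_N)$, which holds because each $\mu_i \in \p(X_i\times A)$ already lives on $X_i\times A$ and $(P_{\vX},m_\rho)$ acts as the identity in the $i$th structure slot and maps the labels into $A$. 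Conversely, given an optimal $\pi^*$ of $\MFGW_\beta$, set $\pi^{(i)}_* \coloneqq ((P_{X_i\times A}), (P_{\vX},m_\rho))_\# \pi^*$ (so the second marginal is $\nu^* = (P_{\vX},m_\rho)_\#\pi^*$ for every $i$), and the same algebraic identity — now used as an \emph{equality} since we are evaluating at $t = d^*$, $b = m_\rho(\va)$ — gives $\MFGW_\beta(\Xf_1,\dots,\Xf_N) \ge \sum_i \rho_i \FGW^2_\beta(\Xf_i, \Yf^*)$. Combining the two chains forces all inequalities to be equalities, so $\Yf^*$ is a barycenter (and existence in \cref{eq:fgw_bary} follows because $\MFGW$ attains its infimum).

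For the ``isomorphism'' part, I would run the argument of \cref{thm:gen-bary} verbatim at the level of labelled spaces: for any further minimizer $\Yf^\dagger = (Y^\dagger\times A, d^\dagger, e, \nu^\dagger)$, build a gluing $\pi^\dagger_{\mathrm g}$; equality in the chain forces, for $\pi^\dagger_{\mathrm g}\otimes\pi^\dagger_{\mathrm g}$-a.e.\ configuration, both $d^\dagger(y,y') = d^*(\vx,\vx')$ and $b = a^\dagger_{\text{label part of }y} = m_\rho(\va)$ (the label minimizer is \emph{unique} since $\lVert\cdot\rVert^2$ is strictly convex, whereas $d^*$ only need not be the unique $t$-minimizer but a.e.\ equality of the integrand still forces $d^\dagger = d^*$ a.e.\ because the integrand is a sum of squares each of which must individually match its minimum value — this is the one spot needing a little care). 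Then $(P_{\vX},m_\rho)_\#\pi^\dagger_{\mathrm g} = \pi^*$ is MFGW-optimal, the coupling $\pi^\dagger_{\mathrm g}$ (read as an element of $\Pi(\pi^*,\nu^\dagger)$ on $(\vX\times A)\times(Y^\dagger\times A)$) witnesses $\FGW_\beta(\Yf^*,\Yf^\dagger) = 0$ because both the structure cost $|d^* - d^\dagger|^2$ and the label cost $e^2$ vanish a.e.\ (the label components agree since $m_\rho(\va)$ is exactly the $A$-coordinate built into $\nu^*$), and by \cite[Thm~3.1]{vayer2020fused} vanishing FGW gives the desired label-identity-plus-structure-isometry, i.e.\ $\Yf^* \cong \Yf^\dagger$. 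The main obstacle I anticipate is purely bookkeeping: correctly tracking which projection/mean map acts on which Cartesian factor of $\vXA$ so that all marginal conditions and the FGW-isomorphism hypotheses (label map must be the identity on $A$) are literally met; the analytic content beyond \cref{thm:gen-bary} is only the strict-convexity observation that pins down $m_\rho$ as the unique label barycenter, which is what makes $\nu^*$ well-defined and the isomorphism argument go through on the label side.
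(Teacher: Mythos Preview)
Your approach matches the paper's essentially line by line. The one concrete slip is in the lower-bound direction: after replacing the integrand by its pointwise minimum $c_{\fused}((\vx,\va),(\vx',\va'))$, the correct pushforward is the plain projection $P_{\vXA}$ onto $\vXA = \bigtimes_i (X_i \times A)$, which sends $\pi_{\mathrm g}$ to an element of $\Pi(\mu_1,\dots,\mu_N)$ and lets you integrate $c_{\fused}$ to conclude $\ge \MFGW_\beta$. The map $(P_{\vX}, m_\rho)$ you invoke instead lands in $\vX \times A$, not $\vXA$; its image measure cannot lie in $\Pi(\mu_1,\dots,\mu_N)$ (the $i$th marginal on $X_i \times A$ would carry label $m_\rho(\va)$ rather than $a_i$), and $c_{\fused}$ cannot even be integrated against it, since $c_{\lab}(\va)$ depends on \emph{all} the $a_i$, not merely their mean. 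The map $(P_{\vX}, m_\rho)$ enters only in the reverse direction, to build $\nu^*$ and the plans $\pi^{(i)}_*$, exactly as you (correctly) do there and as the paper does. This is precisely the bookkeeping snag you anticipated; once you swap in $P_{\vXA}$ the chain of inequalities goes through, and the rest of your argument (including the isomorphism step via $\FGW_\beta(\Yf^*,\Yf^\dagger)=0$) is sound and agrees with the paper. Incidentally, the ``remainders'' you mention vanish: the identities $\sum_i \rho_i \lvert d_i - d^* \rvert^2 = c_{\struc}$ and $\sum_i \rho_i \lVert a_i - m_\rho(\va) \rVert^2 = c_{\lab}(\va)$ hold exactly, with no leftover terms to reassemble.
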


\begin{proof}
  The major difference to the free-support barycenter in \cref{eq:gw_bary}
  is the additional label cost $c_{\lab}$,
  which may be rearranged as
  \begin{equation*}
      c_{\lab}(\va)
      = \sum_{i=1}^N \rho_i \, \| a_i \|^2 
      - \sum_{i,j=1}^N \rho_i \rho_j \, \langle a_i, a_j \rangle
      = \sum_{i=1}^N \rho_i \, \Bigl\| a_i - \sum_{j=1}^N \rho_j a_j \Bigr\|^2
      = \sum_{i=1}^N \rho_i \, \| a_i - m_\rho(\va) \|^2.
  \end{equation*}
  The statement can be established similarly to the proof of \cref{thm:gen-bary}.
  For this, 
  let $\Yf = (Y \times A,d,e,\nu)$
  be some labelled mm-space on $(A,e)$.
  Further,
  let $\smash{\pi^{(i)}_{\FGW}}$ be optimal with respect to $\FGW_\beta(\Xf_i,\Yf)$, $i=1,\dotsc,N$,
  and $\pi_{\mathrm{g}} \in \Pi(\mu_1,\dotsc,\mu_N,\nu)$ be the associated gluing along $(Y \times A)$.
  Exploiting that 
  the Euclidean mean $m_\rho(\va)$ minimizes $\min_{b \in \R^d} \sum_{i=1}^N \rho_i \| a_i - b \|^2$
  for all $\va \in \vA$,
  and $d^*(\vx,\vx')$ minimizes
  \smash{$\min_{t \in \R} \sum_{i=1}^N \rho_i \lvert d_i(x_i,x_i') - t \rvert^2$}
  for all $\vx \in \vX$,
  we obtain 
  similarly as in \cref{eq:gw-mgw} that
  
  \begin{align}
        &\sum_{i=1}^N \rho_i \FGW^2_\beta(\Xf_i,\Yf)\\
        =
         &\int_{(\vXA \times (Y \times A))^2} 
         (1-\beta) 
         \sum_{i=1}^N \rho_i \,
         |d_i(x_i,x_i') - d(y, y')|^2
         \notag \\
         &\qquad
         + \tfrac\beta2 
         \sum_{i=1}^N \rho_i \,
          \lVert a_i - b \rVert^2
          + \tfrac\beta2 
         \sum_{i=1}^N \rho_i \,
          \lVert a_i' - b' \rVert^2
          \dx \pi_{\mathrm g}((\vx,\va),(y,b))
          \dx \pi_{\mathrm g}((\vx,\va),(y,b))
        \notag \\
        \ge
         &\int_{(\vXA \times (Y \times A))^2} 
         (1-\beta) 
         \sum_{i=1}^N \rho_i \,
         |d_i(x_i,x_i') - d^*(\vx, \vx')|^2
         \notag \\
         &\qquad
         + \tfrac\beta2 
         \sum_{i=1}^N \rho_i \,
          \lVert a_i - m_\rho(\va) \rVert^2
          + \tfrac\beta2 
         \sum_{i=1}^N \rho_i \,
          \lVert a_i' - m_\rho(\va') \rVert^2
          \dx \pi_{\mathrm g}
          \dx \pi_{\mathrm g}
          \notag \\
          \ge &\MFGW_\beta(\Xf_1, \dots, \Xf_N).
    \label{eq:gw-mfgw}
    \end{align}%
    Based on
  the substitution 
  $\pi^{(i)}_* \coloneqq (P_i, P_{\vX}, m_\rho)_\# \pi^*$
  with 
  \[
  (P_i,P_{\vX},m_\rho)(\vx, \va) \coloneqq ((x_i,a_i), \vx, m_\rho(\va))
  \]
  for any optimal $\pi^* \in \p(\vXA)$ of $\MFGW_\beta(\Xf_1,\dotsc,\Xf_N)$, 
  we conclude similarly as in \cref{eq:mgw-gw} that
    \begin{align}
    &\MFGW_\beta(\Xf_1,\dotsc,\Xf_N)
    \notag \\
    &=
    \sum_{i=1}^N \rho_i \biggl(
         \int_{(X_i \times A \times \vX \times A)^2}
        (1-\beta) |d_i(x_i, x_i') - d^*(\vy,\vy')|^2 
    \notag \\
    &\qquad
    + \tfrac\beta2  \|a_i - b\|^2
    + \tfrac\beta2  \|a_i' - b'\|^2
      \dx \pi^{(i)}_{*}((x_i,a_i),(\vy,b))
      \dx \pi^{(i)}_{*}((x_i',a_i'),(\vy',b')) \biggr)
      \notag \\
      &\geq
      \sum_{i=1}^N \rho_i \FGW^2_\beta(\Xf_i, \Yf^*).
    \label{eq:mfgw-fgw}
  \end{align}
  Due to \cref{eq:gw-mfgw},
  the last inequality has to be an equality
  and
  $\Yf^*$ is a barycenter.

    Vice versa \cref{eq:mfgw-fgw} guarantees 
    equality in \cref{eq:gw-mfgw} for
    every fused barycenter 
    $\Yf^\dagger \coloneqq (Y^\dagger \times A, d^\dagger,e,\nu^\dagger)$.
    On the basis of the optimal plans $\smash{\pi^{(i)}_{\FGW}}$
    between $\Xf_i$ and $\Yf^\dagger$,
    we build a gluing
    $\pi_{\mathrm{g}}^\dagger \in \Pi(\mu_1,\dotsc,\mu_N,\nu^\dagger)$
    as above.
    Noticing that
    $(P_{\vXA})_\# \pi_{\mathrm{g}} \in \Pi(\mu_1,\dotsc,\mu_N)$ 
    is a solution to 
    $\MFGW_\beta(\Xf_1,\dotsc,\Xf_N)$,
    we going to show that
    $\Yf^\dagger$ is isomorphic to
    $\Yf^* \coloneqq (\vX \times A, d^*, e, \nu^*)$
    with $\nu^* \coloneqq (P_{\vX},m_\rho)_\# (P_{\vXA})_\# \pi_{\mathrm g}^\dagger$.
    Similarly to the last part of the proof of \cref{thm:gen-bary},
    the equality in \cref{eq:gw-mfgw} with respect to $\Yf^\dagger$,
    the non-negativity of the integrands, 
    and the minimizing properties of $m_\rho$ and $d^*$
    imply that
  \begin{align*}
        &m_\rho(\va) = b ,
        \quad
        m_\rho(\va') = b', 
        \quad\text{and}\quad
        d^*(\vx,\vx') = d^\dagger(y,y') 
        \\
        &\text{for } 
        (\pi_{\mathrm{g}}^\dagger \otimes \pi_{\mathrm{g}}^\dagger)
        \text{-a.e.\ } ((\vx,\va),(y,b),(\vx',\va'),(y',b')).
  \end{align*}
  Considering
  $\gamma \coloneqq {(P_{\vX},m_\rho,P_{Y \times A})}_\# \pi_{\mathrm{g}} 
  \in \Pi(\nu^*, \nu^\dagger)$,
  where 
  \begin{equation*}
      (P_{\vX},m_\rho,P_{Y \times A})((\vx,\va),(y,b)) 
      \coloneqq ((\vx, m_\rho(\va)),(y,b)),    
  \end{equation*}
  we finally have
  \begin{align*}
      \FGW_{\beta}^2(\Yf^*,\Yf^\dagger) 
      &\leq \int_{((\vX \times A) \times (Y \times A))^2} 
 (1-\beta) \, |d^*(\vx,\vx') - d^\dagger(y,y')|^2
 + \tfrac\beta2 \,
  e^2(b_1,b_2)
  + \tfrac\beta2 \,
  e^2(b_1',b_2')\\
  &\qquad
  \dx \gamma((\vx,b_1),(y,b_2))
  \dx \gamma((\vx',b_1'),(y',b_2'))\\
  &= \int_{(\vXA \times (Y \times A))^2} 
 (1-\beta) \,
 \lvert
 \underbrace{d^*(\vx,\vx') - d^\dagger(y,y')}_{\equiv 0 \; \text{a.e.}}
 \rvert^2
 + \tfrac\beta2 \,
  \lVert
  \underbrace{m_\rho(\va) - b}_{\equiv 0 \; \text{a.e.}}
  \rVert^2\\
  &\qquad
  + \tfrac\beta2 \,
  \lVert
  \underbrace{m_\rho(\va') - b'}_{\equiv 0 \; \text{a.e.}}
  \rVert^2
  \dx \pi_g^\dagger((\vx,\va),(y,b))
  \dx \pi_g^\dagger((\vx',\va'),(y',b'))
  = 0,
  \end{align*}
  which concludes the proof.
\end{proof}

Due to the difficult interpretation of the free-support barycenter \cref{eq:fgw_bary}
in the context of imaging,
we restrict our attention to
\emph{fixed-support unbalanced fused barycenters} which are solutions of
\begin{equation}\label{eq:fugw_bary}
\min_{\nu \in \M^+(Y \times A)} \sum_{i=1}^N \rho_i \UFGW_{\beta,\eps}(\Xf_i,\Yf) \quad \text{with} \quad \Yf = (Y \times A, d, e, \nu),
\end{equation}
where the metric space $(Y,d)$ is given in advance.
In analogy to \cref{eq:rest-bary},
UFGW may be unbalanced in $\Xf_i$ with respect to some entropy function $\phi_i$ 
and balanced in $\Yf$. Similarly to before we use $\bullet$ as a dummy measure for labelled mm-spaces whenever the associated problems are independent of the measure.

\begin{theorem}[Fixed-Support Fused Barycenter]
  \label{thm:fus-rest-bary}
  Let $\Xf_i \coloneqq (X_i \times A, d_i, e, \mu_i)$ be labelled mm-spaces on $(A,e)$,
  and $\rho_i \ge 0$ be weights
  with $\sum_{i=1}^N \rho_i = 1$. The infimum in \cref{eq:fugw_bary} is attained for $\Yf^* \coloneqq (Y \times A, d, e, \nu^*)$
  with $\nu^* \coloneqq (P_{Y \times A})_\# \pi^*$,
  where $\pi^*$ solves
  $\UMFGW_\beta(\Xf_1,\dots,\Xf_N, (Y \times A,d,e,\bullet))$
  with cost functions
  \begin{align*}
    c_{\struc}((\vx,y),(\vx',y'))
    &\coloneqq
    \sum_{i=1}^N \rho_i \,
    | d_i(x_i,x_i') - d(y,y')|^2,
    \\
    c_{\lab}(\va,b) 
    &\coloneqq 
    \sum_{i=1}^N \rho_i \, e(a_i,b).
  \end{align*}
  The unregularized UMFGW ($\eps=0$) is unbalanced in $\Xf_i$
  with respect to $\rho_i\phi_i$
  and unrestricted in $\Yf$. Furthermore, for every barycenter
  $\YY^\dagger = (Y \times A,d,e,\nu^\dagger)$,
  there exists a multi-marginal solution $\pi^*$ 
  of $\UMFGW(\Xf_1,\dotsc,\Xf_N,(Y \times A,d,e,\bullet))$ 
  so that $(P_{(Y \times A)})_\# \pi^* = \nu^\dagger$.
\end{theorem}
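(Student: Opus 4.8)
The plan is to mirror the proof of \cref{thm:rest-bary} almost verbatim, replacing the structure-only cost by the fused cost $c_{\fused}$ from \cref{eq:c-fused} and exploiting that, by \cref{rem:fgw-theory}, UMFGW is just UMGW applied to the pseudometric measure spaces $\Xf_i$. Concretely, I would first fix an arbitrary candidate measure $\nu$ on $Y \times A$, set $\Yf \coloneqq (Y \times A, d, e, \nu)$, and take optimal plans $\pi^{(i)}_{\UFGW}$ for $\UFGW_{\beta}(\Xf_i, \Yf)$, $i = 1, \dots, N$. Since the $\UFGW$ problems are balanced in $\Yf$, all these plans share the second marginal $(P_{Y \times A})_\# \pi^{(i)}_{\UFGW} = \nu$, so Dudley's gluing lemma \cite[Lem~8.4]{ABS21} produces $\pi_{\mathrm g} \in \M^+(\vXA \times (Y \times A))$ with $(P_{(X_i \times A) \times (Y \times A)})_\# \pi_{\mathrm g} = \pi^{(i)}_{\UFGW}$.

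Next, on the event that $\pi_{\mathrm g}$ appears in both factors of the double integral, I would sum the $\UFGW$ functionals and regroup the structure terms $(1-\beta) |d_i - d|^2$ into a single integral against the cost $c_{\struc}((\vx,y),(\vx',y')) = \sum_i \rho_i |d_i(x_i,x_i') - d(y,y')|^2$, and the label terms $\tfrac\beta2 e(a_i, b)^2$ into $c_{\lab}(\va, b) = \sum_i \rho_i e(a_i, b)$; the $\phi_i$-divergence terms aggregate into $\sum_i \rho_i D^\otimes_{\phi_i}((P_{X_i \times A})_\# \pi_{\mathrm g}, \mu_i)$, which is exactly the unbalanced penalty with respect to $\rho_i \phi_i$. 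Since the last coordinate is unconstrained, this yields
\begin{equation*}
  \sum_{i=1}^N \rho_i \UFGW_{\beta}^2(\Xf_i, \Yf)
  \ge \UMFGW_\beta(\Xf_1, \dots, \Xf_N, (Y \times A, d, e, \bullet)),
\end{equation*}
the analogue of \cref{eq:ugwb-umgw}. For the reverse direction, given any optimal plan $\pi^*$ of the UMFGW problem, I would push it forward under $(P_{X_i \times A}, P_{Y \times A})$ to get plans $\pi^{(i)}_*$ whose second marginals coincide, and read off $\UMFGW_\beta(\dots) \ge \sum_i \rho_i \UFGW_\beta^2(\Xf_i, \Yf^*)$ with $\nu^* = (P_{Y \times A})_\# \pi^*$. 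Chaining the two inequalities forces equality throughout, so $\Yf^*$ attains the infimum. The statement about an arbitrary barycenter $\Yf^\dagger$ follows by repeating the gluing construction for a plan realizing $\nu^\dagger$ and noting that equality in \cref{eq:ugwb-umgw} (now forced) makes $(P_{\vXA \times (Y \times A)})_\# \pi_{\mathrm g}^\dagger$ — or rather $\pi_{\mathrm g}^\dagger$ itself — a solution of the UMFGW problem with the required marginal.

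The main obstacle I expect is purely bookkeeping rather than conceptual: one must carefully track how the fused cost $c_{\fused}$ decomposes under the gluing so that the $\tfrac\beta2 c_{\lab}(\va)$ and $\tfrac\beta2 c_{\lab}(\va')$ terms line up with the two copies of $\pi_{\mathrm g}$, and one must confirm that the $\phi$-divergence penalties behave correctly when the input measures $\mu_i$ need not be probability measures in the unbalanced regime (this is where the $D^\otimes_{\phi}$ notation and the scaling properties from \cref{prop:relax_as_ot} must be invoked consistently). A second minor point: since $A$ is merely a compact metric space here (not a convex subset of $\R^d$ as in \cref{thm:fus-gen-bary}), no Euclidean-mean argument is available or needed — the label space $A$ is simply carried along in the fixed support $Y \times A$, and the pointwise minimization over an extra coordinate that drove \cref{thm:fus-gen-bary} does not occur. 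I would state explicitly that $c_{\struc}$ and $c_{\lab}$ here are those of \cref{thm:fus-rest-bary} (with no factor $\tfrac12$), and that lower semi-continuity of the fused cost together with \cref{rem:fgw-theory} guarantees that the relevant infima are attained, closing the argument.
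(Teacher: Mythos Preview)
Your proposal is correct and follows precisely the paper's own approach: the paper's proof consists of a single sentence stating that the argument of \cref{thm:rest-bary} goes through verbatim once the cost $c$ is replaced by $c_{\fused}$, and you have spelled out exactly that substitution (gluing along the balanced $(Y\times A)$-marginal, regrouping the fused integrand, and chaining the two inequalities). The bookkeeping concerns you raise are real but minor, and your observation that no Euclidean-mean step is needed here---unlike in \cref{thm:fus-gen-bary}---is exactly the point.
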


\begin{proof}
  The statement can be established using a similar argumentation
  as in the proof of \cref{thm:rest-bary}, 
  where the cost function $c$  has to be replaced by $c_{\fused}$.
\end{proof}

\begin{remark}[Fused Barycenter Algorithm]
    Due to \cref{thm:fus-rest-bary}, 
    the unbalanced fixed-support barycenter can be numerically computed
    using two steps.
    First, 
    the minimizer $\pi^*$ of 
    $\UMFGW_\beta(\Xf_1,\dots,\Xf_N, (Y \times A,d,e,\bullet))$
    is calculated
    by applying \cref{alg:UMGW} 
    with respect to the special cost function $c_{\fused}$ in \cref{eq:c-fused},
    see also \cref{rem:fgw-theory}.
    Secondly, 
    we compute the marginal
    $\nu^* \coloneqq (P_{(Y \times A)})_\# \pi^*$.
\end{remark}

\section{Numerical Examples} \label{sec:numerics}
The multi-marginal GW transport
is especially useful for image processing tasks
like the computation of barycenters, progressive interpolation
and noise removal.
We consider (gray-value) images on box domains
which may be described as piecewise constant functions on their pixel grid.
These images can be interpreted as discrete mm-spaces $\XX \coloneqq (X, d, \mu)$,
where $X$ is the grid containing the centres of the pixels,
$d$ is the normalized Euclidean distance, and
$\mu$ is the probability measure
corresponding to the (normalized) gray values.
For all numerical computations,
we replace $\mu^\otimes$ in UMGW and UMFGW by the counting measure
so that the regularizer corresponds to the discrete entropy.
The experiments are performed on an off-the-shelf MacBook Pro (Apple M1 chip, 8~GB RAM) and are implemented\footnote{The source code is publicly available at \url{https://github.com/Gorgotha/UMGW}.} in Python 3. For the numerical computations, 
we partly rely on the Python Optimal Transport (POT) toolbox \cite{POT-toolbox} as well as some publicized implementations of \cite{SejViaPey21}. Some of the following input data is taken from \cite{shapes2d}.

\begin{figure}
    \centering
    \includegraphics[width=130pt]{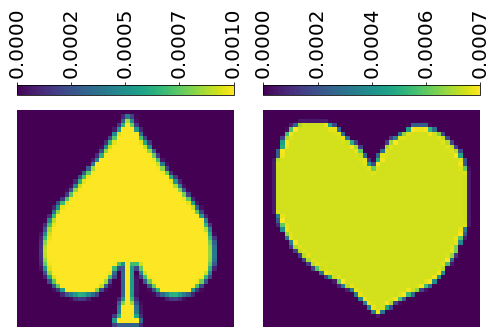}
    \caption{The ($50 \times 50$)-pixel test images of the numerical experiment in \cref{sec:bal-bary-2d}. (Left) The spade $\XX_1$. (Right) The heart $\XX_2$.}
    \label{fig:ex1_1}
\end{figure}

\subsection{Balanced 2D Barycenters
and Run-Time Comparison}
\label{sec:bal-bary-2d}

\begin{figure}
     \centering
     \begin{subfigure}[b]{0.5\textwidth}
         \centering
         \includegraphics[width=\linewidth]{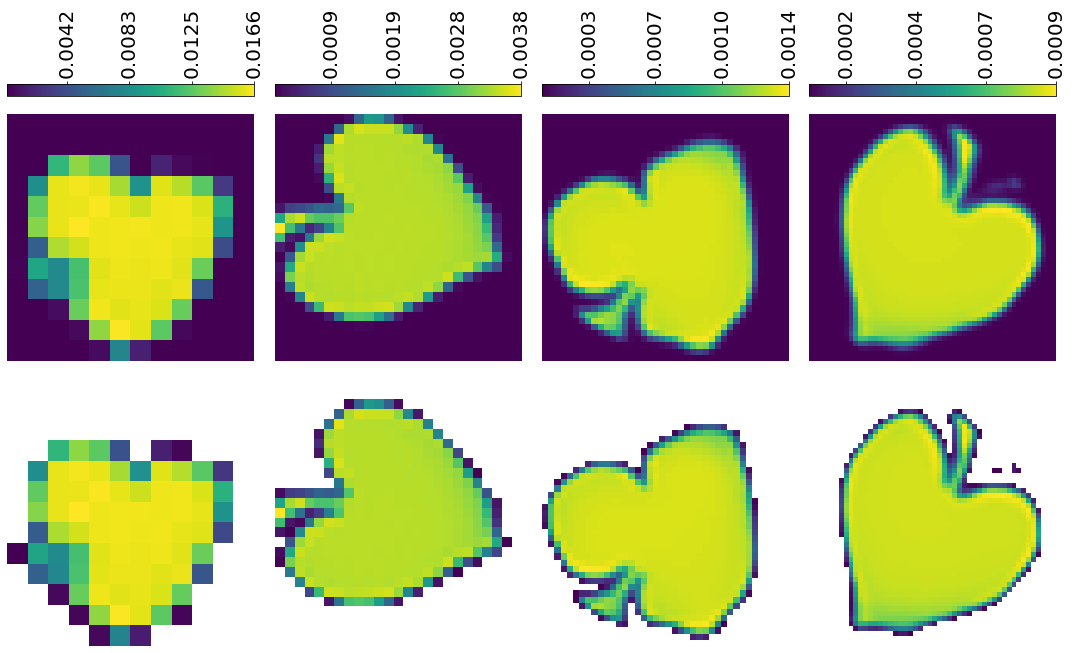}
         \newline
         \footnotesize
         \noindent
         $12 \times 12$ \hspace{20pt}
         $25 \times 25$ \hspace{20pt}
         $40 \times 40$ \hspace{20pt}
         $50 \times 50$
         \caption{Fixed-support barycenters computed using UMGW
         for different grid sizes. 
         (Top) Computed measure on the full support.
         (Bottom) Computed measure restricted to the essential support.}
         \label{fig:ex1_2}
     \end{subfigure}
     \begin{subfigure}[b]{0.48\textwidth}
         \centering
         \rotatebox{90}{
         \tiny 
         RBCD ($\eps_1$) \hspace{6pt}
         RBCD ($\eps_2$) \hspace{9pt}
         BCD}%
         \includegraphics[width=0.97\linewidth]{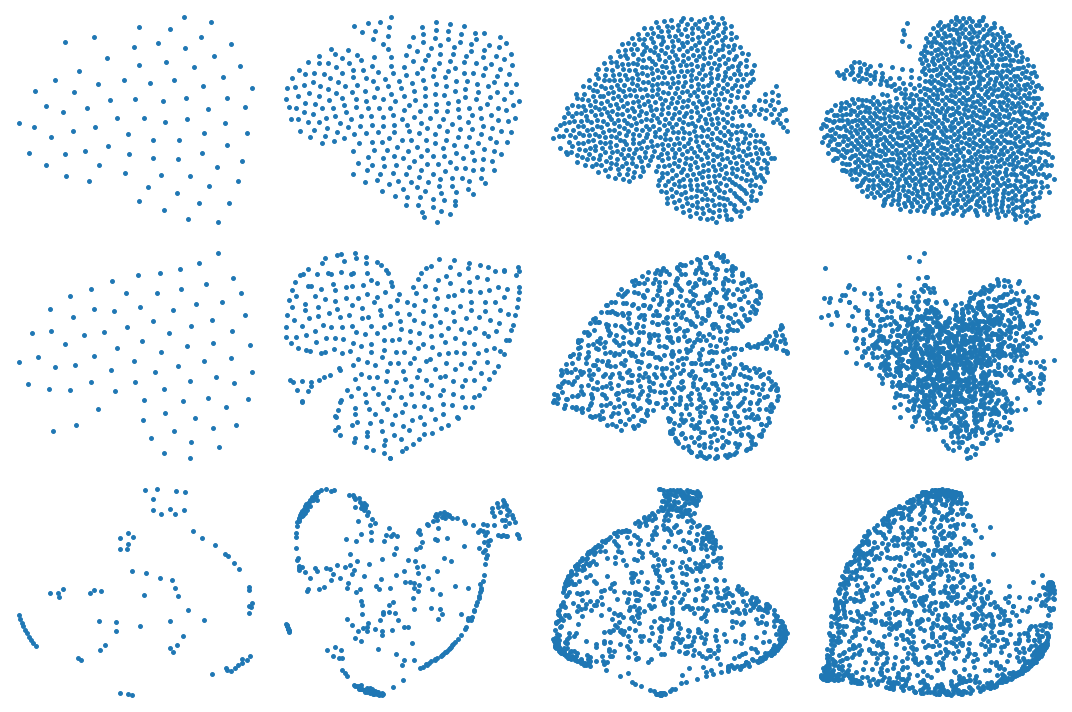}
         \newline
         \footnotesize
         \hspace*{9pt}
         $81$ \hspace{32pt}
         $348$ \hspace{32pt}
         $901$ \hspace{32pt}
         $1379$
         \caption{Free-support barycenters computed using (R)BCD 
         for different numbers of support point.
         The regularization parameter is chosen as 
         $\eps_1 \coloneqq 5 \cdot 10^{-3}$
         and $\eps_2 \coloneqq 5 \cdot 10^{-4}$.}
         \label{fig:ex1_3}
     \end{subfigure}
     \caption{Comparison between the balanced barycenters 
     between the images in \cref{fig:ex1_1} 
     computed using 
     UMGW (\cref{rem:umgw_for_bary}) 
     and (R)BCD (\cref{rem:peyre}).
     The UMGW barycenters are determined on full square grids 
     of different sizes.
     The (R)BCD barycenters are computed using differed fixed numbers of support 
     points and are embedded in 2d using MDS. 
     The number of support points corresponds to the number of essential 
     support points in the computed UMGW barycenters.}
     \label{fig:ex1}
\end{figure}

In this first example,
we compare the UMGW method 
for fixed-support barycenters in \cref{rem:umgw_for_bary}
with block coordinate descent (BCD) 
and its regularized version (RBCD) 
proposed by Peyré, Cuturi, and Solomon \cite{PCS2016}.
A brief summary of (R)BCD is given in \cref{rem:peyre}. 
As mentioned, our method requires us to fix the support beforehand
and yields a measure on this support,
whereas BCD and RBCD fix a discrete measure on a fixed number of support points
and yield a dissimilarity matrix. For the latter procedures, we employ the implementations of the POT library \cite{POT-toolbox}.
For the numeric computations,
we consider two $50 \times 50$ pixel images of a spade and a heart, see \cref{fig:ex1_1}, from which we extract the mm-spaces $\XX_1$ and 
$\XX_2$ with normalized Euclidean distances. 
Using the UMGW method in \cref{rem:umgw_for_bary} 
with $\eps = 0.15 \cdot 10^{-3}$, $\rho_1\coloneqq\rho_2 \coloneqq 0.5$,
we compute several balanced barycenters on different grids in the image domain
consisting of $12 \times 12$, $25 \times 25$, $40 \times 40$, and $50 \times 50$ pixels.
The numerical computations are restricted to 200 seconds,
and the resulting barycenters are shown in \cref{fig:ex1_2}.

\begin{figure}
    \centering
    \small
    \hfill
    \raisebox{2pt}{\tikz{\draw[thick, PLOTblue] (0pt,0pt) -- (15pt,0pt);}}
    UMGW, \;
    \raisebox{2pt}{\tikz{\draw[thick, PLOTorange] (0pt,0pt) -- (15pt,0pt);}}
    BCD, \;
    \raisebox{2pt}{\tikz{\draw[thick, PLOTgreen] (0pt,0pt) -- (15pt,0pt);}}
    RBCD ($\eps_2$), \;
    \raisebox{2pt}{\tikz{\draw[thick, PLOTgreen, densely dashed] (0pt,0pt) -- (15pt,0pt);}}
    RBCD ($\eps_1$)
    \hspace{12pt}
    \newline
    \rotatebox{90}{\hspace{20pt}\footnotesize{barycentric loss}}
    \includegraphics[width=400pt]{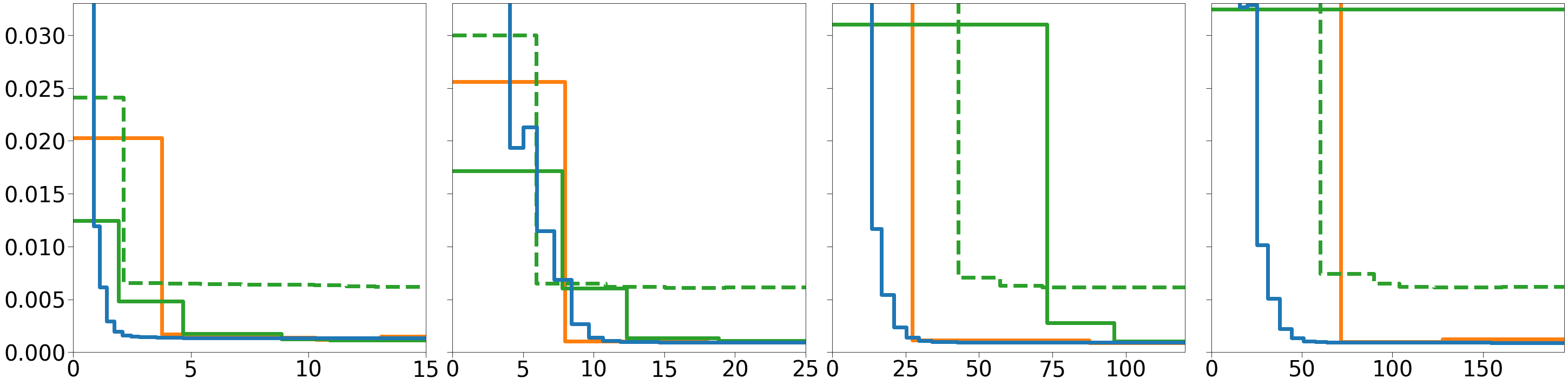}
    \\[-3pt]
     \hspace*{25pt}
     \footnotesize
     $12 \times 12$ grid, \hspace{38pt}
     $25 \times 25$ grid, \hspace{38pt}
     $40 \times 40$ grid, \hspace{38pt}
     $50 \times 50$ grid, \\
     \hspace*{22pt}
     $81$ points \hspace{51pt}
     $348$ points \hspace{49pt}
     $901$ points \hspace{46pt}
     $1379$ points
    \caption{Barycentric loss 
    of the UMGW method and the (R)BCD method 
    to compute the balanced barycenter between the images in \cref{fig:ex1_1}
    plotted against computation time in seconds.
    The loss is shows with respect to different grid sizes 
    and numbers of support-points.
    }
    \label{fig:2re}
\end{figure}

\begin{figure}
    \centering
    \includegraphics[width=300pt]{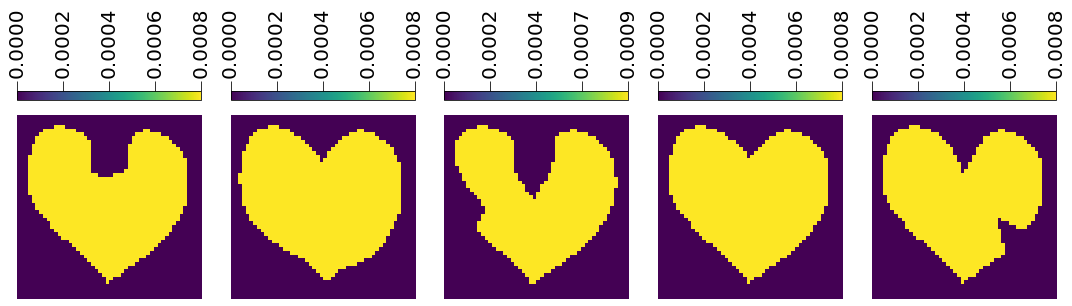}
    \caption{The ($50 \times 50$)-pixel test images of the multi-input experiment
    in \cref{sec:bal-bary-2d}.}
    \label{fig:multi-marginals_1}
\end{figure}

\begin{figure}
    \centering
    \small
    \hfill
    \raisebox{2pt}{\tikz{\draw[thick, PLOTblue] (0pt,0pt) -- (15pt,0pt);}}
    UMGW, \;
    \raisebox{2pt}{\tikz{\draw[thick, PLOTorange] (0pt,0pt) -- (15pt,0pt);}}
    BCD, \;
    \raisebox{2pt}{\tikz{\draw[thick, PLOTgreen] (0pt,0pt) -- (15pt,0pt);}}
    RBCD ($\eps_2$)
    \hspace{17pt}
    \newline
    \rotatebox{90}{\hspace{25pt}barycentric loss}
    \includegraphics[width=380pt]{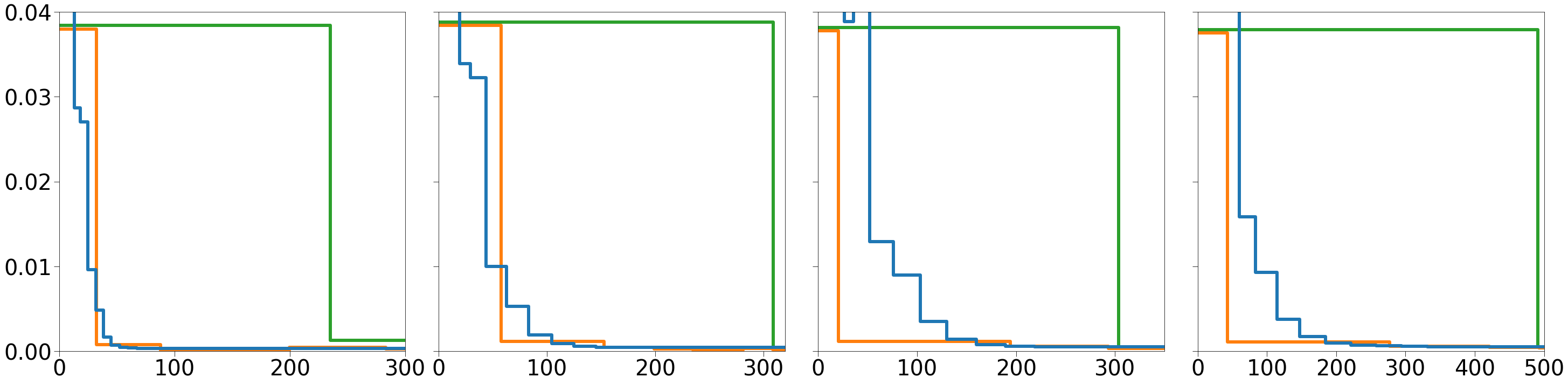}
    \\[-3pt]
     \hspace*{20pt}
     2 inputs \hspace{50pt}
     3 inputs \hspace{45pt}
     4 inputs \hspace{50pt}
     5 inputs
    \caption{Barycentric loss 
    of the UMGW method and the (R)BCD method 
    to compute the balanced barycenter between 
    several numbers of inputs from \cref{fig:ex1_1}
    plotted against computation time in seconds.
    }
    \label{fig:multi-marginals_2}
\end{figure}
    
We proceed with with computing (R)BCD balanced barycenters,
see \cref{rem:peyre},
where we initalize the (R)BCD method
by a uniform distribution on a fixed number of support point.
For a fair comparison,
the number of support points is chosen with respect to
the number of essential support points of the UMGW barycenters in \cref{fig:ex1_2},
that is,
to the number of pixels with mass greater than $10^{-4}$.
Moreover, the computation time is again restricted to 200 seconds.
For the regularized variant,
we consider $\eps_1 \coloneqq 5 \cdot 10^{-3}$ and $\eps_2 \coloneqq 5 \cdot 10^{-4}$.
Unfortunately, lower choices result in numerical errors.
The outputs consist of dissimilarity matrices,
which are embedded in two dimensions
using multi-dimensional scaling (MDS).
The embedded support-free barycenters are shown in \cref{fig:ex1_3}.

Due to the rotational and reflectional invariance of GW, 
the computed barycenters in \cref{fig:ex1} do not follow any particular alignment.
Moreover,
the GW barycenter does not have to be unique.
Despite the different nature of the barycenters (image or point cloud),
the results for our UMGW-based method and BCD are qualitatively comparable.
Considering the evolution of
the barycentric loss $(\GW^2_\eps(\XX_1, \YY) + \GW^2_\eps(\XX_2, \YY))/2$
over the computation time,
our UMGW method perform similar well than (R)BCD,
which substantiate the discussion in \cref{rem:peyre}
conjecturing that the complexity of both method  is comparable.
Increasing the number of inputs,
see \cref{fig:multi-marginals_1},
which represent $50 \times 50$ pixel images from the heart class in \cite{shapes2d},
we repeat the experiment to determine a balanced barycenter 
on a $50 \times 50$ pixel grid using UMGW.
For (R)BCD, 
the number of points is again chosen with respect to
the number of essential pixels of the UMGW barycenter.
\Cref{fig:multi-marginals_2} shows the respective barycenter losses against time in seconds. Intriguingly, while BCD scales very well,
and UMGW only slightly worse than BCD,
the computation of RBCD increases dramatically.

\subsection{Progressive Interpolation on Non-Euclidean Domains}\label{sec:progr-interp}
In this example, 
we utilize the UMGW-based barycenter method in \cref{rem:umgw_for_bary}
to calculate a progressive GW interpolation 
between measures on the unit sphere in $\R^3$.
Using the standard spherical coordinates, 
we discretize the unit sphere by an $80 \times 80$ pixel grid on $[0,2\pi] \times [0,\pi]$
and endow it with the normalized great circle distance. 
In this manner, 
we obtain a (discrete) metric space $(X,d_X)$. 
Next,
we equip $(X,d_X)$ with measures representing the landmasses of the earth 
at two points in time. 
Here, $\mu_1$ corresponds to the distribution of \emph{Pangea}, 
a supercontinent containing the entire landmass of the earth until the jurassic period.
On the other hand,  
$\mu_2$ corresponds to the distribution of the today landmasses.
These measures lead us to the mm-spaces 
$\XX_1 = (X,d_X,\mu_1)$ and $\XX_2 = (X,d_X,\mu_2)$. 
A visualization is given 
on the uttermost left-hand and right-hand side
of \cref{fig:prog_interpolation}. 
Note that 
the point masses around the poles become smaller
to respect the denser discretization.
Choosing $\eps = 3 \cdot 10^{-3}$,
we compute the balanced barycenters
for $(\rho_1,\rho_2) = (0.8,0.2),(0.6,0.4),(0.4,0.6),(0.2,0.8)$.
These barycenters are also shown in \cref{fig:prog_interpolation}.
Obtaining such an interpolation with BCD or RBCD is non-trivial as
the mm-space corresponding to the resulting dissimilarity matrix 
needs to be embedded on the sphere,
which poses an additional challenge.
Clearly,
the approach in this example can be applied 
to compute GW barycenters on arbitrary compact Riemannian manifolds.

\setlength{\labwidth}{59.0pt}
\begin{figure}
    \centering
    \includegraphics[width = 0.95\linewidth]{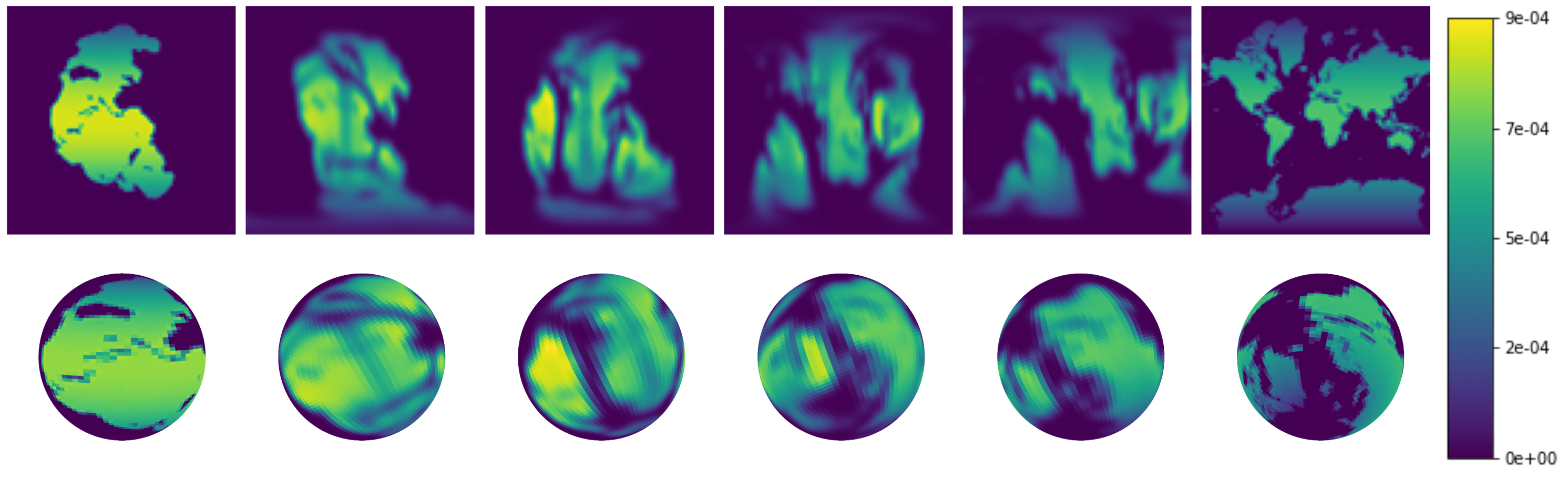}\\[-10pt]
    \scriptsize
    \hspace*{-27pt}
    \parbox{\labwidth}{\centering $\XX_1$}
    \parbox{\labwidth}{\centering $\YY_1$}
    \parbox{\labwidth}{\centering $\YY_2$}
    \parbox{\labwidth}{\centering $\YY_3$}
    \parbox{\labwidth}{\centering $\YY_4$}
    \parbox{\labwidth}{\centering $\XX_2$}
    \caption{Progressive Gromov--Wasserstein interpolation
      via multi-marginal Gromov--Wasserstein transport in the balanced setting.}
    \label{fig:prog_interpolation}
\end{figure}

\subsection{Fused Gromov--Wasserstein Barycenter of Labelled Images}

In this section, we turn our attention to the computation of barycenters of shapes coming from 2D images which pixels admit a unique label. The following examples indicate that the fused version of our barycenter procedure yields desirable results when incorporating this additional label information.

For the {\bfseries first example},
we consider the MNIST database \cite{LBBH98}, 
which consists of handwritten digits on a ($28 \times 28$)-pixel grid. 
By concatenation and rotation of digits,
we construct four ($30 \times 34$)-pixel images
containing the numbers 98 and 81,
see the first two columns of \cref{fig:98_fused,fig:81_fused}.
The images are again interpreted as mm-spaces.
Furthermore,
we label the different digits of the numbers.
For this,
we use the label space $(A,g)$
where $A \coloneqq \{0,1\}$ 
and $e$ is the Euclidean distance,
and annotate the pixels of the first digit by $0$ and of the second digit by $1$.
The (labelled) mm-spaces are shown in first two columns of \cref{fig:98_fused,fig:81_fused}. Here blue corresponds to label $0$ and red to label $1$. 

\setlength{\labwidth}{67pt}
\begin{figure}
    \centering
    \includegraphics[width=0.95\linewidth]{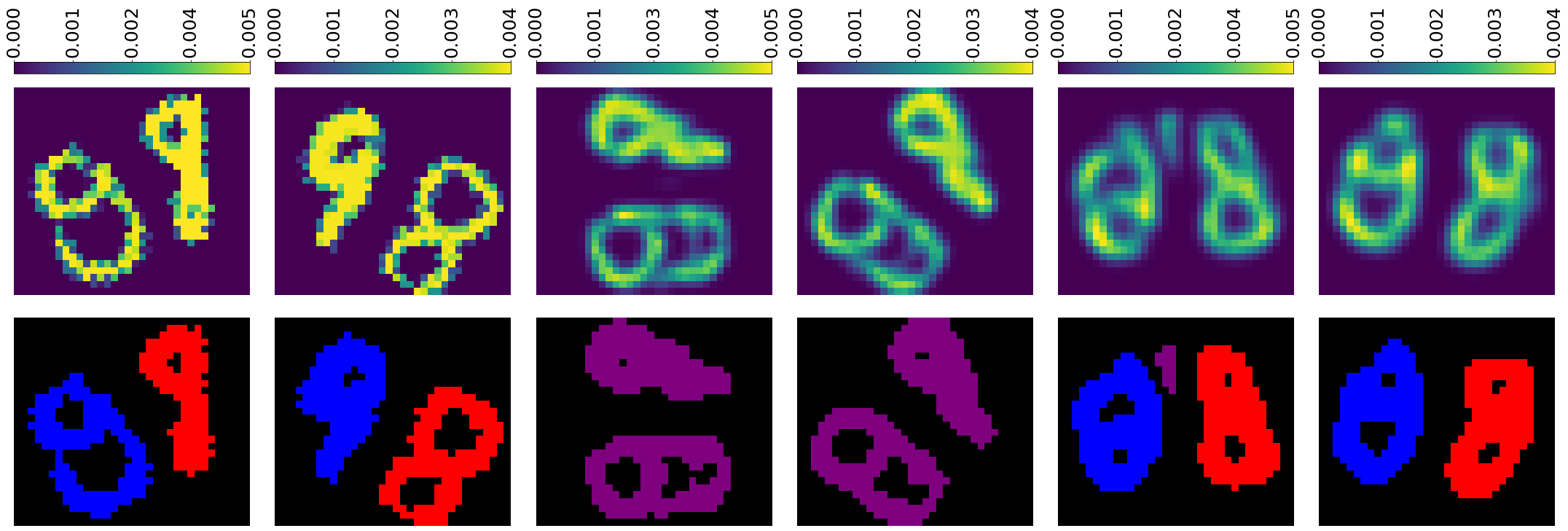}\\
    \scriptsize
    \hspace*{0pt}
    \parbox{\labwidth}{\centering $\XX_1$/$\Xf_1$}
    \parbox{\labwidth}{\centering $\XX_2$/$\Xf_2$}
    \parbox{\labwidth}{\centering GW}
    \parbox{\labwidth}{\centering UGW}
    \parbox{\labwidth}{\centering FGW}
    \parbox{\labwidth}{\centering FUGW}
    \caption{First row, left to right: input marginals, GW barycenter, UGW barycenter, FGW barycenter, UFGW barycenter. 
    Second row, left to right: labels of the input marginals, 
    visualized transport to the barycenters
    by averaging the label colour of the incoming mass. 
    }
    \label{fig:98_fused}
\end{figure}

\begin{figure}
    \centering
    \includegraphics[width=0.95\linewidth]{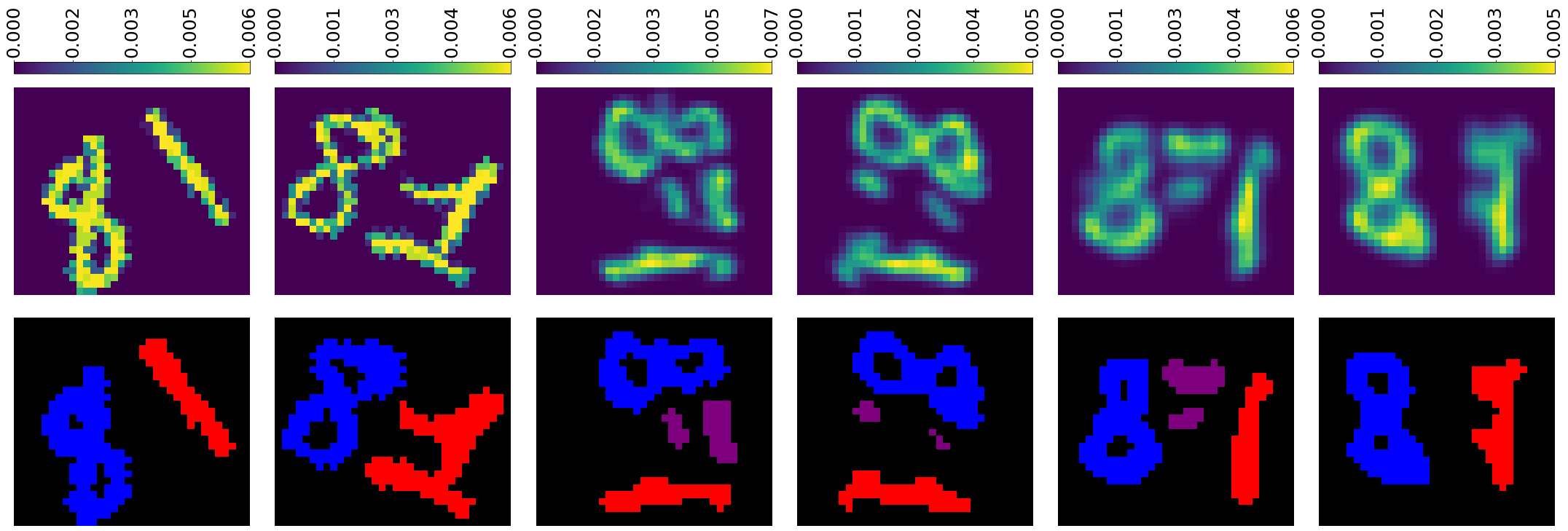}\\
    \scriptsize
    \hspace*{0pt}
    \parbox{\labwidth}{\centering $\XX_1$/$\Xf_1$}
    \parbox{\labwidth}{\centering $\XX_2$/$\Xf_2$}
    \parbox{\labwidth}{\centering GW}
    \parbox{\labwidth}{\centering UGW}
    \parbox{\labwidth}{\centering FGW}
    \parbox{\labwidth}{\centering FUGW}
    \caption{First row, left to right: input marginals, GW barycenter, UGW barycenter, FGW barycenter, UFGW barycenter. 
    Second row, left to right: labels of the input marginals, 
    visualized transport to the barycenters
    by averaging the label colour of the incoming mass.
    }
    \label{fig:81_fused}
\end{figure}

To determine fixed-support barycenters, 
we apply \cref{thm:rest-bary,thm:fus-rest-bary},
where the multi-marginal transport is computed by \cref{alg:UMGW}.
For this,
we set the support and distance of the mm-space $\YY \coloneqq (Y,d_Y,\bullet)$ according to a ($30 \times 34$)-pixel image. 
Moreover, 
we choose $\Yf \coloneqq ((Y_1 \times \{1\}) \cup (Y_2 \times \{0\}), d, e, \bullet)$, where $Y_1$ and $Y_2$ corresponds to the left and right half of the pixel grid, respectively.
We use the weights $\rho \coloneqq (0.5,0.5)$,
a small regulatization by $\eps \coloneqq 0.0002$,
the divergence $\phi_i \coloneqq 0.01 \, \phi_{\ent}$ for the unbalanced setting,
and the trade-off parameter $\beta = 0.5$ for the fused setting.
We compute the GW, UGW, FGW, and UFGW barycenters presented in the first row of \cref{fig:98_fused,fig:81_fused}. As usual with $\GW$, the non-fused barycenters do not follow any particular alignment. However, In the fused case the correct alignment is enforced by our choice of labelling in Y.
In the second row,
the transport to the barycenter is visualized 
by averaging the label colours with respect to 
the transported (labelled) pixels to a certain point in the barycenter.  

In \cref{fig:98_fused},
the shape of the eight in first space looks like the nine in the other space and vice versa. 
This is well reflected by the GW and UGW barycenters, 
which average the eight with the nine and the other way round.
Due to the additional labels in the fused setup,
the FGW and UFGW barycenter correspond to the matching of both nines and both eights.
In the balanced case, some mass has to be transported between both digits resulting in an undesired artifact.
Looking at the last barycenter, 
this can be avoided by using the unbalanced approach. 
In \cref{fig:81_fused}, 
both digits are matched in the correct order for all barycenters.
Due to the mass variations between the digits,
the GW, UGW, and FGW barycenter contain some artefacts between the digits.
The unbalanced, fused approach can again avoid the unwanted artifacts.

Since the labels of the fused barycenters are here fixed in advanced,
we automatically obtain an embedding of the barycenter such that the numbers are written upright.
In both examples, 
the fused unbalanced Gromov--Wasserstein barycenter provides the best depiction of the numbers 98 and 81.

For the {\bfseries second example}, we take two 2D shapes represented by ($64 \times 64$)-pixel images from the previously mentioned publicly available database \cite{shapes2d}. Both images are of the class ``camel''. 
Similarly to above, we equip each pixel with a unique label. This time our label space $(A,e)$ is given by $A = \{0,1,2,3\}$ with distance
\[
e(a,a') \coloneqq
\begin{cases}
0 &\text{if } a = a'\\
1 &\text{if } \vert a - a' \rvert = 2\\
1/2 &\text{else.}
\end{cases}
\]
We associate the labels $0,1,2,3$ with the head, legs, tail, and hump, respectively.
Notably, the metric $e$ admits the following interpretation:
if two distinct body parts are direct neighbours,
then their distance is $1/2$, otherwise the distance is $1$.
The constructed spaces are shown in the first two columns of \cref{fig:animals_fused}. We proceed as before and compute the barycenters
with respect to the parameters $\rho \coloneqq (0.5, 0.5)$,
$\eps = 0.7 \cdot 10^{-4}$,
$\phi_i \coloneqq 0.01 \, \phi_{\text{BS}}$,
and $\beta = 0.5$.
For the non-fused case, we choose support and distance in $\YY \coloneqq (Y,d,\bullet)$ to be the same underlying ($64 \times 64$)-pixel grid as the inputs. 
For the fused case, we consider the full Cartesian product $Y \times A$, i.e.\
$\Yf \coloneqq (Y \times A,d, e, \bullet)$.
The computed GW, UGW, FGW, and UFGW barycenters are shown in \cref{fig:animals_fused}.

Neither the balanced nor the unbalanced $\GW$ approach finds a meaningful barycenter, i.e.\ an image which illustrates a camel. 
Figuratively, 
these approaches match the head of one input with the hump of the other and vice versa.
This incorrect matching results from large dissimilarity of the single features.
The fused approaches, however, eliminate this issue by exploiting the label information and find a meaningful barycenter.
The visualized transport also shows that in the balanced case two artifacts are present. Notably, the right artifact is orange and thus a result of transporting between the head in one input and the hump in the other. This implies large transport costs which explains why this particular artifact is not present in the unbalanced version while the other artifact is. 

\begin{figure}
    \centering
    \includegraphics[width=0.95\linewidth]{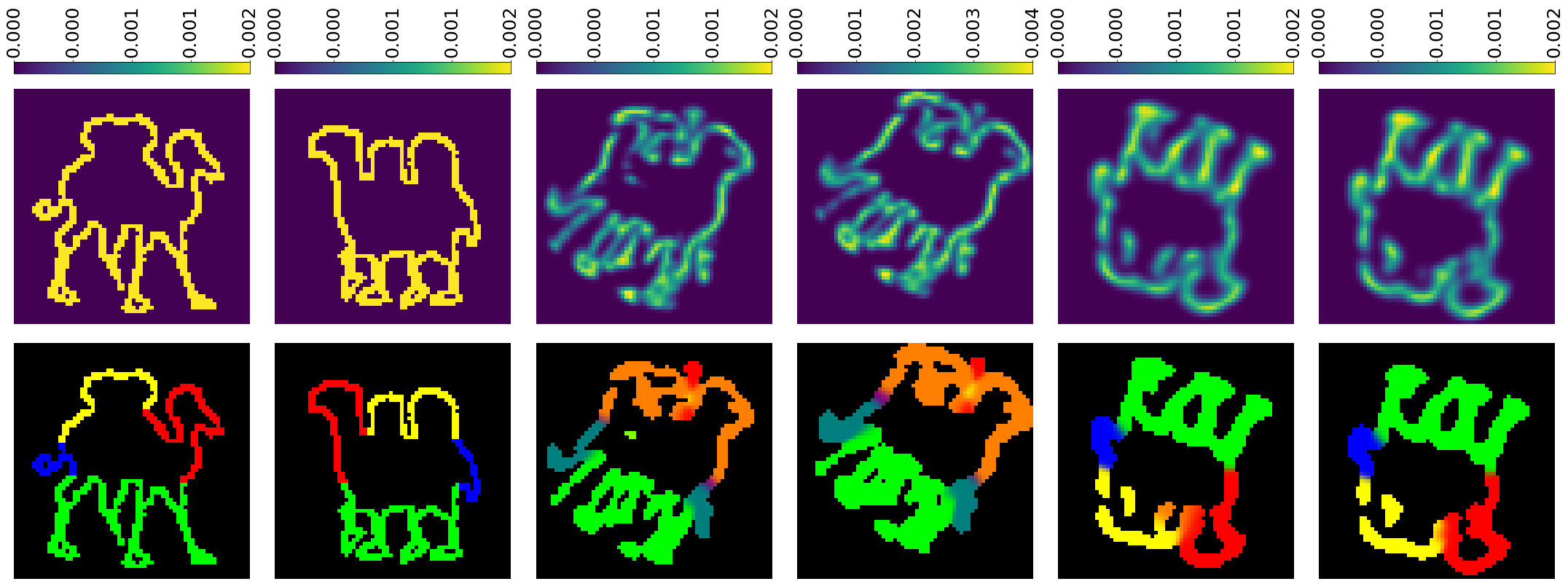}\\
    \scriptsize
    \hspace*{0pt}
    \parbox{\labwidth}{\centering $\XX_1$/$\Xf_1$}
    \parbox{\labwidth}{\centering $\XX_2$/$\Xf_2$}
    \parbox{\labwidth}{\centering GW}
    \parbox{\labwidth}{\centering UGW}
    \parbox{\labwidth}{\centering FGW}
    \parbox{\labwidth}{\centering FUGW}
    \caption{First row, left to right: input marginals, GW barycenter, UGW barycenter, FGW barycenter, UFGW barycenter. 
    Second row, left to right: labels of the input marginals, 
    visualized transport to the barycenters
    by averaging the label colour of the incoming mass.
    }
    \label{fig:animals_fused}
\end{figure}

\begin{figure}
    \centering\small
    \hspace*{27pt}
    $\XX_1$ \hspace{60pt}
    $\XX_2$ \hspace{60pt}
    $\XX_3$ \hspace{60pt}
    $\XX_4$ \hspace{60pt}
    $\XX_5$ 
    \newline
    \rotatebox{90}{
        \footnotesize
        \hspace{-5pt}
        transferred image
        \hspace{15pt}
        clean image
        \hspace{20pt}
        noisy image}
    \includegraphics[width=390pt]{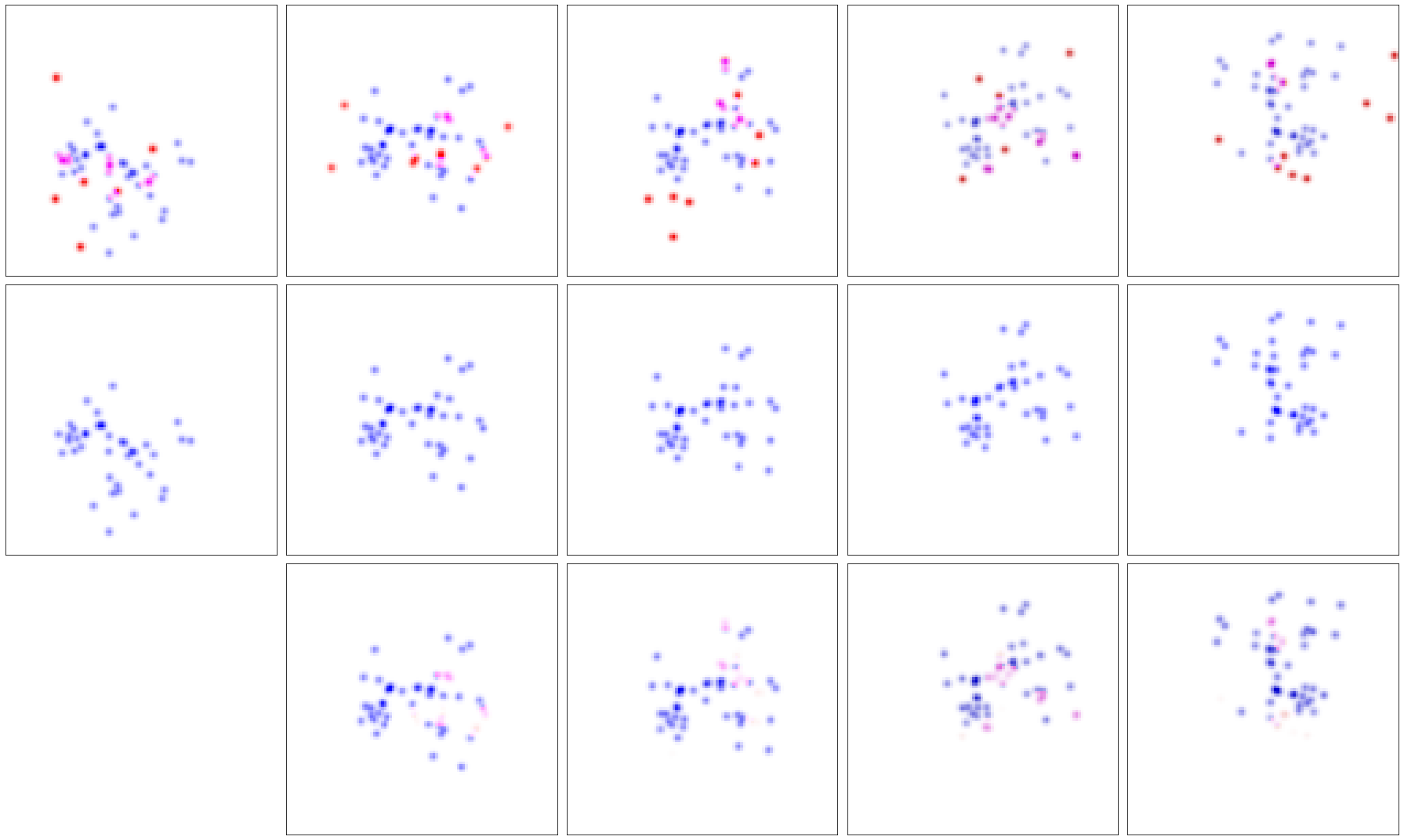}
    \newline
    \hspace*{79pt}
    $\bm K_{1\to2} \mu_1^{\mathrm{clean}}$ \hspace{20pt}
    $\bm K_{1\to3} \mu_1^{\mathrm{clean}}$ \hspace{20pt}
    $\bm K_{1\to4} \mu_1^{\mathrm{clean}}$ \hspace{20pt}
    $\bm K_{1\to5} \mu_1^{\mathrm{clean}}$ 
    \caption{Synthetic snapshots for the numerical experiment in \cref{sec:trans-op}.
    The middle row contains the clean images of the dynamic of 40 coherent particles.
    The top row shows the noisy images, 
    where every images contains 10 additional (noise) particles. 
    The colour encodes clean particles (blau), noise particles (red),
    and the superposition of clean and noise particles (magenta).
    The bottom row shows the transferred first clean image by the computed transfer operators.
    The colour corresponds to the colour in the top row.}
    \label{fig:pd}
\end{figure}

\subsection{Estimating Transfer Operators of Particle Systems}
\label{sec:trans-op}

In the final example, 
multi-marginal GW transport is used 
for the estimation of transfer operators.
Mathematically, a transfer operator
or Frobenius--Perron operator 
is linear map $K \colon L^1(\XX_1) \to L^1(\XX_2)$ 
describing the evolution of a dynamical system 
between two moments in time
\cite{froyland2013analytic,KLNS20}.
Here $L^1(\XX_i)$ for $\XX_i \coloneqq (X_i, d_i, \mu_i)$
denotes the set of all absolutely integrable functions 
on $X_i$ with respect to $\mu_i$. 
The aim of this example is to recover the dynamical evolution
of a coherent particle system from noisy observations.
In this context, \emph{coherent} means that
the relative movement between coherent particles can be neglected.
The synthetic data are generated as follows:
Firstly, 
the positions of 40 coherent particles 
are sampled from the standard Gaussian on $\R^2$.
Slowly rotating the coherent structure and moving it along some curve,
we generate five snapshots of the particle system.
To these snapshots, 
we add 10 further particles serving as noise.
Equipping all particles with a uniform weight,
applying a Gaussian filtering,
and thresholding the resulting image values at $10^{-5}$,
we obtain the synthetic observations in \cref{fig:pd},
where each snapshots consists of $100 \times 100$ pixels
and is interpreted as mm-space $\XX_i$, $i=1,\dots,5$,
like in the previous examples.
The goal is to determine a transfer operator 
that describes the underlying movement
by mapping the density of a single particle in one observation
to the density of the corresponding particle in the next observation.

If the rotation of the system is insignificant,
the transfer operator can be estimated 
using regularized, unbalanced Wasserstein transport \cite{KLNS20,BLNS2021}. 
More precisely,
having the matrix $\mpi$ corresponding to an optimal (regularized, unbalanced) transport plan
between two observations---two measures on a joint, discrete metric space,
we may estimate the transfer operator
by the linear mapping $\bm K$
corresponding to the matrix
\begin{equation}
    \label{eq:trans-op}
    \bm K^{\tT} \coloneqq \diag(\mpi_1)^{-1} \mpi
    \quad\text{with}\quad
    \mpi_1 \coloneqq \mpi^\tT \bm 1,
\end{equation}
where $\bm 1$ denotes the all-ones vector,
and where $0^{-1}$ is set to $0$.
The construction of $\bm K$ can be motivated using statistical physics \cite{KLNS20},
and the quality of $\bm K$ strongly depends on the quality of $\mpi$.
Considering the five snapshots in \cref{fig:pd},
we notice that the particle system is significantly rotating anti-clockwise
such that the Wasserstein transports $\mpi$ and the corresponding $\bm K$
do not match related particles between different observations.

\begin{figure}
    \centering\small
    \includegraphics[width=105pt, clip=true, trim=0pt 0pt 1075pt 0pt, angle=-90]{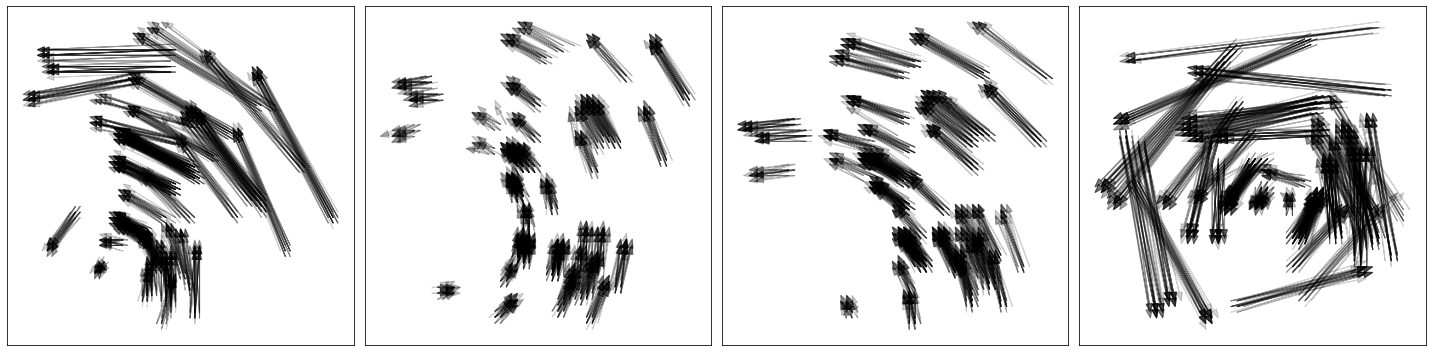}
    \includegraphics[width=105pt, clip=true, trim=359pt 0pt 717pt 0pt, angle=-90]{umgw_transf_ex_2.png}
    \includegraphics[width=105pt, clip=true, trim=717pt 0pt 359pt 0pt, angle=-90]{umgw_transf_ex_2.png}
    \includegraphics[width=105pt, clip=true, trim=1075pt 0pt 0pt 0pt, angle=-90]{umgw_transf_ex_2.png}
    \newline
    $\bm K_{1\to2}$ \hspace{70pt}
    $\bm K_{2\to3}$ \hspace{70pt}
    $\bm K_{3\to4}$ \hspace{70pt}
    $\bm K_{4\to5}$ 
    \caption{The estimated transfer operators between the synthetic snapshots in \cref{fig:pd}.
    The arrows indicate to where the mass of a certain pixel is transferred. 
    The gray values indicate the amount of mass which is transported.
    The figures contain only arrows corresponding to a mass transport larger than $10^{-4}$.}
    \label{fig:pd2}
\end{figure}

Differently from the Wasserstein distance, 
the Gromov--Wasserstein distance is invariant under rotations and translations,
i.e.\ the GW plan between two snapshots $\XX_i$ and $\XX_j$ should match the related particles. 
On the basis of this observation,
we propose to adapt the construction of the transfer operator $\bm K$ in \cref{eq:trans-op}
by replacing $\mpi$ with an optimal GW plan.
To handle the noise particles 
and to estimate all transfer operators simultaneously,
we compute an $\UMGW_\eps$ plan $\mpi_{\UMGW}$ with $\eps \coloneqq 0.2\cdot 10^{-3}$,
where the marginals are unbalanced with respect to $10^{-3} \KL$.
Having the evolution of the system over time in mind,
we employ the cost function
\begin{equation*}
  c \coloneqq \sum_{i=1}^{4} |d_i - d_{i+1}|^2.
\end{equation*}
The transfer operator $\bm K_{i \to j}$ between the snapshots $\XX_i$ and $\XX_j$ is then estimated by
\begin{equation*}
    \bm K^{\tT}_{i \to j} \coloneqq \diag(\mpi_i)^{-1} (P_{X_i \times X_j})_\# \mpi_{\UMGW}
    \quad\text{with}\quad
    \mpi_i \coloneqq (P_{X_i})_\# \mpi_{\UMGW}.
\end{equation*}
The estimated operators $\bm K_{i \to i+1}$, $i=1,\dots,4$,
are shown in \cref{fig:pd2}.
Despite the noise,
the underlying dynamic (transition and rotation) between the snapshots is well recovered.
Moreover,
we use the estimated dynamic to propagate the first clean image $\mu_1^{\mathrm{clean}}$
to $\XX_i$ by computing $\bm K_{1 \to i} \mu_1^{\mathrm{clean}}$ for $i=2,\dots,5$.
In this manner, 
we nearly recover the remaining clean images,
see \cref{fig:pd} bottom row.
Notice that the computed transfer operators do not transfer mass from clean particles
to well separated noise particles.
This numerical example is just a first proof of concept
that multi-marginal, unbalanced, regularized GW transports are able 
to recover dynamics where the Wasserstein transport would fail.
This could be a starting point for further research about GW-based transfer operators.

\section{Conclusions}
We proposed the novel formulation of multi-marginal GW transport and its regularized, unbalanced and fused versions. By establishing the bi-convex relaxation and the barycenter relation, we provide new tools to compute fixed-support GW barycenters. We gave several examples showing that our procedure finds meaningful GW, UGW, FGW, and UFGW barycenters. Furthermore, we provided evidence that our procedure can also be used to create progressive GW interpolations or to denoise particle images.
The employed bi-convex relaxation proved to be tight for appropriate cost functions in the balanced case. 
Up to now, we are not aware of a tightness argument in the unbalanced setting. 
The main problem is here that the marginals of $\pi$ and $\gamma$ do not have to coincide
such that we cannot exploit the marginal conditionally negative definiteness.
Also based on the bi-convex relaxation, 
we may ask what happens if the given marginals of the two underlying plans $\pi$ and $\gamma$ differ in general. This gives rise to a multi-marginal co-optimal transport formulation, whose discussion is left as future work.

\appendix
\section{Proof of \cref{prop:existence}}
The proof is based on properties of
Csiszár divergences and the following properties
of the product measure and integral operators.

\begin{lemma}[Product Measure, {\cite[Prop~2.7.8]{bogachev_weak}}]
  \label{lem:product_lsc}
  Let $X$ be a Polish space.
  If $\pi_n \weakly \pi$ converges weakly in $\M^+(X)$,
  then $\pi_n \otimes \pi_n \weakly \pi \otimes \pi$
  converges weakly in $\M^+(X \times X)$.
\end{lemma}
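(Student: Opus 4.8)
The plan is to split the statement into two parts: relative weak compactness of the sequence $(\pi_n\otimes\pi_n)_{n\in\N}$ in $\M^+(X\times X)$, and identification of its unique weak accumulation point as $\pi\otimes\pi$. A standard subsequence argument then upgrades these to full convergence $\pi_n\otimes\pi_n\weakly\pi\otimes\pi$. (When $X$ is compact, as in all our applications, one may argue more directly: the products $f(x)g(x')$ span a point-separating subalgebra of $C(X\times X)$, which is uniformly dense by the Stone--Weierstrass theorem, so testing $\pi_n\otimes\pi_n$ against such functions already suffices. The outline below covers the general Polish case.)

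For the compactness, first I would note that $(\pi_n)_n$ has uniformly bounded mass: testing $\pi_n\weakly\pi$ against the constant function $\mathbf 1$ gives $\|\pi_n\|_{\TV}=\pi_n(X)\to\pi(X)$, so $M\coloneqq\sup_n\|\pi_n\|_{\TV}<\infty$. Since a weakly convergent sequence of finite positive measures on a Polish space is tight (Prokhorov), for every $\eps>0$ there is a compact $K_\eps\subset X$ with $\sup_n\pi_n(X\setminus K_\eps)<\eps$. Then $K_\eps\times K_\eps$ is compact in $X\times X$, and using $(X\times X)\setminus(K_\eps\times K_\eps)\subset\bigl((X\setminus K_\eps)\times X\bigr)\cup\bigl(X\times(X\setminus K_\eps)\bigr)$ we obtain
\[
  (\pi_n\otimes\pi_n)\bigl((X\times X)\setminus(K_\eps\times K_\eps)\bigr)
  \le 2\,\pi_n(X)\,\pi_n(X\setminus K_\eps)<2M\eps
\]
uniformly in $n$. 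Hence $(\pi_n\otimes\pi_n)_n$ is tight with uniformly bounded mass, so relatively weakly compact by Prokhorov's theorem.

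For the identification, suppose $\pi_{n_k}\otimes\pi_{n_k}\weakly\lambda$ along a subsequence. For bounded continuous $f,g$ on $X$, the map $(x,x')\mapsto f(x)g(x')$ is bounded and continuous on $X\times X$, and Fubini together with $\pi_{n_k}\weakly\pi$ gives
\begin{align*}
  \int_{X\times X} f(x)g(x')\dx(\pi_{n_k}\otimes\pi_{n_k})
  &=\Bigl(\int_X f\dx\pi_{n_k}\Bigr)\Bigl(\int_X g\dx\pi_{n_k}\Bigr)\\
  &\to\Bigl(\int_X f\dx\pi\Bigr)\Bigl(\int_X g\dx\pi\Bigr)
  =\int_{X\times X} f(x)g(x')\dx(\pi\otimes\pi).
\end{align*}
Thus $\lambda$ and $\pi\otimes\pi$ coincide on the linear span of the product functions, which is a measure-determining family for finite Borel measures on the Polish space $X\times X$; hence $\lambda=\pi\otimes\pi$. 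Since every subsequence of $(\pi_n\otimes\pi_n)_n$ then has a further subsequence converging weakly to $\pi\otimes\pi$, the whole sequence converges weakly to $\pi\otimes\pi$.

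The step I expect to be most delicate is the measure-determining property in the general Polish setting: one must verify that the product functions pin down \emph{finite} positive measures and not merely probability measures. This I would handle by normalising $\lambda$ and $\pi\otimes\pi$ by their total masses---which agree, again by the constant-function test---reducing to the probability case, where a monotone-class/Dynkin argument on measurable rectangles applies. The remaining ingredients (mass bounds, tightness bookkeeping, and Prokhorov's theorem for mass-bounded families of positive measures) are standard on Polish spaces.
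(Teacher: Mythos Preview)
Your argument is correct. The paper does not actually prove this lemma; it is stated with a direct citation to Bogachev's book and used as a black box. Your proposal therefore supplies what the paper omits: a self-contained proof via Prokhorov tightness of the product sequence together with identification of the limit on tensor-product test functions. The only point that requires a little care---tightness of a weakly convergent sequence in $\M^+(X)$ rather than $\p(X)$---you handle implicitly by the mass bound and normalisation, which is fine. For the paper's purposes the spaces $X_i$ are compact, so your parenthetical Stone--Weierstrass remark already gives the shortest route: density of $\spann\{f\otimes g\}$ in $C(X\times X)$ yields the conclusion directly without invoking Prokhorov.
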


\begin{lemma}[Lower Semi-continuity]   \label{lem:int-lsc}
  Let $c \colon X \times X \to [0,\infty]$ be lower semi-continuous. 
  Then the mapping $\pi \mapsto \int_{X \times X} c(x,x') \dx \pi(x) \dx \pi(x')$
  is weakly lower semi-continuous.
\end{lemma}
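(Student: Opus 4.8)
The plan is to reduce the claim to the weak continuity of integrals against bounded continuous functions and then to pass to a supremum. First I would rewrite, using Tonelli's theorem (equivalently, the very definition of the product measure),
\begin{equation*}
  \int_{X \times X} c(x,x') \dx\pi(x)\dx\pi(x')
  = \int_{X\times X} c \dx(\pi \otimes \pi),
\end{equation*}
which is legitimate since $c$ is nonnegative and Borel measurable (being lower semi-continuous). Hence it suffices to show that $\pi \mapsto \int_{X\times X} c \dx(\pi \otimes \pi)$ is weakly lower semi-continuous on $\M^+(X)$.

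Next I would approximate $c$ from below by bounded continuous functions. Since $c\colon X\times X \to [0,\infty]$ is lower semi-continuous and bounded below (by $0$), the truncated Moreau--Yosida regularizations
\begin{equation*}
  c_n(x,x') \coloneqq \min\Bigl\{ n,\ \inf_{(u,u')\in X\times X}\bigl[\, c(u,u') + n\bigl(d_X(x,u)+d_X(x',u')\bigr)\,\bigr] \Bigr\}, \qquad n \in \N,
\end{equation*}
where $d_X$ is the metric on $X$, form a nondecreasing sequence of nonnegative, bounded, $n$-Lipschitz (hence continuous) functions with $c_n \uparrow c$ pointwise on all of $X\times X$ (including where $c=+\infty$); this is the classical Baire-type approximation of a lower semi-continuous function, and monotonicity of the truncated family is immediate because both the level $n$ and the inf-convolutions increase in $n$.

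Then I would observe that for each fixed $n$ the functional $\pi \mapsto \int_{X\times X} c_n \dx(\pi\otimes\pi)$ is even weakly \emph{continuous}: if $\pi_k \weakly \pi$ in $\M^+(X)$, then $\pi_k \otimes \pi_k \weakly \pi \otimes \pi$ in $\M^+(X\times X)$ by \cref{lem:product_lsc}, and since $c_n$ is bounded and continuous, $\int c_n \dx(\pi_k\otimes\pi_k) \to \int c_n \dx(\pi\otimes\pi)$. By the monotone convergence theorem,
\begin{equation*}
  \int_{X\times X} c \dx(\pi\otimes\pi) = \sup_{n\in\N} \int_{X\times X} c_n \dx(\pi\otimes\pi),
\end{equation*}
so the functional of interest is a pointwise supremum of weakly continuous functionals and is therefore weakly lower semi-continuous, which is the assertion.

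I expect the only point requiring genuine care to be the approximation step, namely verifying that the (truncated) inf-convolutions are monotone and converge pointwise to $c$ everywhere on $X\times X$, including at points where $c$ takes the value $+\infty$; this is standard but should be stated carefully. Everything else is an immediate combination of Tonelli's theorem, \cref{lem:product_lsc}, and monotone convergence.
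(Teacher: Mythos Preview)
Your proposal is correct and follows essentially the same route as the paper: approximate $c$ from below by bounded Lipschitz functions, invoke \cref{lem:product_lsc} to get weak continuity of $\pi\mapsto\int c_n\dx(\pi\otimes\pi)$, and pass to the supremum via monotone convergence. The only cosmetic differences are that you give the explicit Moreau--Yosida construction (the paper just cites Villani) and you phrase the conclusion as ``supremum of continuous functionals is lsc'' rather than writing out the $\liminf$ chain directly.
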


\begin{proof}
  For every lower semi-continuous $c$ bounded from below,
  there exists a sequence of (Lipschitz) continuous functions $(c_k)_{k \in \N}$
  with $c_k(x,x') \uparrow c(x,x')$ for all $x,x' \in X$, 
  cf. \cite[p~67]{villani2008optimal}.
  Let $\pi_n \weakly \pi$ be a weakly convergent sequence in $\mathcal M^+(X)$.
  Using \cref{lem:product_lsc},
  we have
  \begin{align*}
    \liminf_{n\to\infty} 
    \int_{X^2} c(x,x') \dx \pi_n(x) \dx \pi_n(x')
    &\ge
      \liminf_{n\to\infty} 
      \int_{X^2} c_k(x,x') \dx \pi_n(x) \dx \pi_n(x')
    \\
    &= 
      \int_{X^2} c_k(x,x') \dx \pi(x) \dx \pi(x')
  \end{align*}
  for all $k \in \N$.  
  Taking the supremum over $k \in \N$
  and applying Lebesgue's dominated convergence theorem
  establishes the weak lower semi-continuity.
\end{proof}

\begin{proof}[Proof of \cref{prop:existence}]
  The objective $F_\varepsilon$ in $\UMGW_\epsilon$ is weakly lower semi-continuous 
  as sum of weakly lower semi-continuous functions.
  More precisely, 
  the integral is weakly lower semi-continuous by \cref{lem:int-lsc}. 
  The remaining terms of the sum are compositions 
  of weakly lower semi-continuous divergences
  and the weakly continuous mappings 
  $\pi \mapsto \pi \otimes \pi$,
  see \cref{lem:product_lsc},
  and
  $\pi \mapsto (P_{X_i})_\# \pi = \pi_i$.
  By Jensen's inequality, we know that
	for every  $\mu, \nu \in \M^+(\mathcal X)$ with $\| \nu \|_{\TV} > 0$,
  it holds
  \begin{equation*}
    D_\phi(\mu, \nu)
    \ge
    \| \nu \|_{\TV} \, 
    \phi( \| \mu \|_{\TV} / \| \nu \|_{\TV}) ,
  \end{equation*}
	see \cite[(2.44)]{LMS18}.
  Therefore, we may bound the objective from below by
  \begin{align*}
    \F_\varepsilon(\pi)
    &\geq 
      \sum_{i=1}^N 
      D_{\phi_i}^\otimes(\pi_i,\mu_i)
      +  \varepsilon \KL^\otimes(\pi,\nu^\otimes)
    \\
    &\geq 
      \sum_{i=1}^N 
      \| \mu_i \|_{\TV}^2 \,
      \phi_i\Bigl(\tfrac{\|\pi_i\|_{\TV}^2}{\|\mu_i\|_{\TV}^2}\Bigr)
      + \varepsilon \, \|\nu^\otimes\|_{\TV}^2 \,
      \phi_{\KL} \Bigl( \tfrac{\|\pi\|_{\TV}^2}{\|\nu^\otimes\|_{\TV}^2} \Bigr)
    \\
    &= \|\pi\|_{\TV}^2 \,
      \biggl[
      \sum_{i=1}^N 
      \underbrace{
      \tfrac{\|\mu_i\|_{\TV}^2}{\|\pi\|_{\TV}^2} \, 
      \phi_i \Bigl(\tfrac{\|\pi\|_{\TV}^2}{\|\mu_i\|_{\TV}^2}\Bigr)
      }_{\to (\phi_i)'_\infty}
      +
      \varepsilon \,
      \underbrace{
      \tfrac{\|\nu^\otimes\|_{\TV}^2}{\|\pi\|_{\TV}^2} \, 
      \phi_{\KL} \Bigl(\tfrac{\|\pi\|_{\TV}^2}{\|\nu^\otimes\|_{\TV}^2}\Bigr)
      }_{\to \infty}
      \biggr].
  \end{align*}
  For both assumptions,
  $F_\varepsilon(\pi)$ diverges for $\|\pi\| \to \infty$
  showing coercivity.
  Consequently,
  every minimizing sequence $(\pi^{(n)})_{n \in \N} \in \M^+(X)$ 
  with $F_\varepsilon(\pi^{(n)}) \downarrow \UMGW_\varepsilon (\XX_1, \dots, \XX_N)$ 
  is uniformly bounded in total variation.
  The theorem of Banach--Alaoglu guarantees 
  the existence of a weakly convergent subsequence 
  $\pi^{(k)} \weakly \tilde \pi \in \M^+(X)$.
  The weak lower semi-continuity of $F_\varepsilon$ ensures
  that $\tilde \pi$ is a minimizer.
\end{proof}

\section*{Acknowledgments}
The authors thank the anonymous reviewers for their valuable suggestions. This work is supported in part by funds from the German Research Foundation (DFG) within the RTG
2433 DAEDALUS and by the BMBF project ``VI-Screen'' (13N15754).

\bibliographystyle{abbrv}
\bibliography{reference}

\end{document}